\newtheorem{thm}{Theorem}[section]
\theoremstyle{plain}
\newtheorem*{thm*}{{\bf Theorem}}
\newtheorem{lem}[thm]{Lemma}
\newtheorem{prop}[thm]{Proposition}
\newtheorem{cor}[thm]{Corollary}
\theoremstyle{definition}
\newtheorem{dfn}[thm]{Definition}
\theoremstyle{remark}
\definecolor{A}{rgb}{.75,1,.75}
\numberwithin{equation}{section}
\newcommand{\cR}{{\mathcal{R}}}
\newcommand{\height}{{\mathrm{ht}}}
\renewcommand{\bar}[1]{\overline{#1}}
\DeclareMathOperator{\qtr}{{\rm qtr}}
\DeclareMathOperator{\coqtr}{{\rm coqtr}}
\newcommand{\undercurvearrowleft}{\raisebox{.5em}{\rotatebox{180}{$\curvearrowright$}}}
\newcommand{\undercurvearrowright}{\raisebox{.5em}{\rotatebox{180}{$\curvearrowleft$}}}
\newcommand{\hctikz}[1]{\raisebox{-.5\height}{\begin{tikzpicture}#1\end{tikzpicture}}}
\tikzset{->->-/.style={decoration={
  markings,
  mark=at position .25 with {\arrow[scale=1.5,black]{>}}, mark=at position .85 with {\arrow[scale=1.5,black]{>}}},postaction={decorate}}}
\tikzset{->-/.style={decoration={
  markings,
  mark=at position .55 with {\arrow[scale=1.5,black]{>}}},postaction={decorate}}}
\tikzset{->/.style={decoration={
  markings,
  mark=at position .99 with {\arrow[scale=1.5,black]{>}}},postaction={decorate}}}
\tikzset{<-/.style={decoration={
  markings,
  mark=at position .10 with {\arrow[scale=1.5,black]{<}}},postaction={decorate}}}
\tikzset{-<-/.style={decoration={
  markings,
  mark=at position .55 with {\arrow[scale=1.5,black]{<}}},postaction={decorate}}}
\tikzset{-<-<-/.style={decoration={
  markings,
  mark=at position .25 with {\arrow[scale=1.5,black]{<}}, mark=at position .85 with {\arrow[scale=1.5,black]{<}}},postaction={decorate}}}
\newcommand{\nesc}[4]{
\pgfmathsetmacro{\xscale}{{#3}}
\pgfmathsetmacro{\yscale}{{#4}}
\pgfmathsetmacro{\xcenter}{{#1}}
\pgfmathsetmacro{\ycenter}{{#2}}
	\pgfmathparse{-.5*\xscale+\xcenter}		\let\Xa\pgfmathresult
    \pgfmathparse{-1*\yscale+\ycenter}		\let\Ya\pgfmathresult
    \coordinate (A) at (\Xa,\Ya);
	\pgfmathparse{-.5*\xscale+\xcenter}		\let\Xb\pgfmathresult
    \pgfmathparse{0+\ycenter}		\let\Yb\pgfmathresult
    \coordinate (B) at (\Xb,\Yb);
	\pgfmathparse{-.25*\xscale+\xcenter}		\let\Xc\pgfmathresult
    \pgfmathparse{.5*\yscale+\ycenter}		\let\Yc\pgfmathresult
    \coordinate (C) at (\Xc,\Yc);
	\pgfmathparse{0+\xcenter}		\let\Xd\pgfmathresult
    \pgfmathparse{0+\ycenter}		\let\Yd\pgfmathresult
    \coordinate (D) at (\Xd,\Yd);
	\pgfmathparse{.25*\xscale+\xcenter}		\let\Xe\pgfmathresult
    \pgfmathparse{-.5*\yscale+\ycenter}		\let\Ye\pgfmathresult
    \coordinate (E) at (\Xe,\Ye);
	\pgfmathparse{.5*\xscale+\xcenter}		\let\Xf\pgfmathresult
    \pgfmathparse{0*\yscale+\ycenter}		\let\Yf\pgfmathresult
    \coordinate (F) at (\Xf,\Yf);
	\pgfmathparse{.5*\xscale+\xcenter}		\let\Xg\pgfmathresult
    \pgfmathparse{1*\yscale+\ycenter}		\let\Yg\pgfmathresult
    \coordinate (G) at (\Xg,\Yg);
\draw[line width=1pt] (A) .. controls (B) and (C) .. (D) .. controls (E) and (F)  .. (G);
}
\newcommand{\dcap}[4]{
\pgfmathsetmacro{\xscale}{{#3}}
\pgfmathsetmacro{\yscale}{{#4}}
\pgfmathsetmacro{\xleft}{{#1}}
\pgfmathsetmacro{\ybottom}{{#2}}
	\pgfmathparse{\xleft}		\let\Xa\pgfmathresult
    \pgfmathparse{\ybottom}		\let\Ya\pgfmathresult
    \coordinate (A) at (\Xa,\Ya);
	\pgfmathparse{\xleft}		\let\Xb\pgfmathresult
    \pgfmathparse{\ybottom+\yscale}		\let\Yb\pgfmathresult
    \coordinate (B) at (\Xb,\Yb);
	\pgfmathparse{\xleft+\xscale}		\let\Xc\pgfmathresult
    \pgfmathparse{\ybottom+\yscale}		\let\Yc\pgfmathresult
    \coordinate (C) at (\Xc,\Yc);
	\pgfmathparse{\xleft+\xscale}		\let\Xd\pgfmathresult
    \pgfmathparse{\ybottom}		\let\Yd\pgfmathresult
    \coordinate (D) at (\Xd,\Yd);
\draw[line width=1pt] (A) .. controls (B) and (C) .. (D);
}
\newcommand{\cwcap}[4]{
\pgfmathsetmacro{\xscale}{{#3}}
\pgfmathsetmacro{\yscale}{{#4}}
\pgfmathsetmacro{\xleft}{{#1}}
\pgfmathsetmacro{\ybottom}{{#2}}
	\pgfmathparse{\xleft}		\let\Xa\pgfmathresult
    \pgfmathparse{\ybottom}		\let\Ya\pgfmathresult
    \coordinate (A) at (\Xa,\Ya);
	\pgfmathparse{\xleft}		\let\Xb\pgfmathresult
    \pgfmathparse{\ybottom+\yscale}		\let\Yb\pgfmathresult
    \coordinate (B) at (\Xb,\Yb);
	\pgfmathparse{\xleft+\xscale}		\let\Xc\pgfmathresult
    \pgfmathparse{\ybottom+\yscale}		\let\Yc\pgfmathresult
    \coordinate (C) at (\Xc,\Yc);
	\pgfmathparse{\xleft+\xscale}		\let\Xd\pgfmathresult
    \pgfmathparse{\ybottom}		\let\Yd\pgfmathresult
    \coordinate (D) at (\Xd,\Yd);
\draw[line width=1pt, ->-] (A) .. controls (B) and (C) .. (D);
}
\newcommand{\ccwcap}[4]{
\pgfmathsetmacro{\xscale}{{#3}}
\pgfmathsetmacro{\yscale}{{#4}}
\pgfmathsetmacro{\xleft}{{#1}}
\pgfmathsetmacro{\ybottom}{{#2}}
	\pgfmathparse{\xleft}		\let\Xa\pgfmathresult
    \pgfmathparse{\ybottom}		\let\Ya\pgfmathresult
    \coordinate (A) at (\Xa,\Ya);
	\pgfmathparse{\xleft}		\let\Xb\pgfmathresult
    \pgfmathparse{\ybottom+\yscale}		\let\Yb\pgfmathresult
    \coordinate (B) at (\Xb,\Yb);
	\pgfmathparse{\xleft+\xscale}		\let\Xc\pgfmathresult
    \pgfmathparse{\ybottom+\yscale}		\let\Yc\pgfmathresult
    \coordinate (C) at (\Xc,\Yc);
	\pgfmathparse{\xleft+\xscale}		\let\Xd\pgfmathresult
    \pgfmathparse{\ybottom}		\let\Yd\pgfmathresult
    \coordinate (D) at (\Xd,\Yd);
\draw[line width=1pt, -<-] (A) .. controls (B) and (C) .. (D);
}
\newcommand{\dcup}[4]{
\pgfmathsetmacro{\xscale}{{#3}}
\pgfmathsetmacro{\yscale}{{#4}}
\pgfmathsetmacro{\xleft}{{#1}}
\pgfmathsetmacro{\ybottom}{{#2}}
	\pgfmathparse{\xleft}		\let\Xa\pgfmathresult
    \pgfmathparse{\ybottom+\yscale}		\let\Ya\pgfmathresult
    \coordinate (A) at (\Xa,\Ya);
	\pgfmathparse{\xleft}		\let\Xb\pgfmathresult
    \pgfmathparse{\ybottom}		\let\Yb\pgfmathresult
    \coordinate (B) at (\Xb,\Yb);
	\pgfmathparse{\xleft+\xscale}		\let\Xc\pgfmathresult
    \pgfmathparse{\ybottom}		\let\Yc\pgfmathresult
    \coordinate (C) at (\Xc,\Yc);
	\pgfmathparse{\xleft+\xscale}		\let\Xd\pgfmathresult
    \pgfmathparse{\ybottom+\yscale}		\let\Yd\pgfmathresult
    \coordinate (D) at (\Xd,\Yd);
\draw[line width=1pt] (A) .. controls (B) and (C) .. (D);
}
\newcommand{\cwcup}[4]{
\pgfmathsetmacro{\xscale}{{#3}}
\pgfmathsetmacro{\yscale}{{#4}}
\pgfmathsetmacro{\xleft}{{#1}}
\pgfmathsetmacro{\ybottom}{{#2}}
	\pgfmathparse{\xleft}		\let\Xa\pgfmathresult
    \pgfmathparse{\ybottom+\yscale}		\let\Ya\pgfmathresult
    \coordinate (A) at (\Xa,\Ya);
	\pgfmathparse{\xleft}		\let\Xb\pgfmathresult
    \pgfmathparse{\ybottom}		\let\Yb\pgfmathresult
    \coordinate (B) at (\Xb,\Yb);
	\pgfmathparse{\xleft+\xscale}		\let\Xc\pgfmathresult
    \pgfmathparse{\ybottom}		\let\Yc\pgfmathresult
    \coordinate (C) at (\Xc,\Yc);
	\pgfmathparse{\xleft+\xscale}		\let\Xd\pgfmathresult
    \pgfmathparse{\ybottom+\yscale}		\let\Yd\pgfmathresult
    \coordinate (D) at (\Xd,\Yd);
\draw[line width=1pt, -<-] (A) .. controls (B) and (C) .. (D);
}
\newcommand{\ccwcup}[4]{
\pgfmathsetmacro{\xscale}{{#3}}
\pgfmathsetmacro{\yscale}{{#4}}
\pgfmathsetmacro{\xleft}{{#1}}
\pgfmathsetmacro{\ybottom}{{#2}}
	\pgfmathparse{\xleft}		\let\Xa\pgfmathresult
    \pgfmathparse{\ybottom+\yscale}		\let\Ya\pgfmathresult
    \coordinate (A) at (\Xa,\Ya);
	\pgfmathparse{\xleft}		\let\Xb\pgfmathresult
    \pgfmathparse{\ybottom}		\let\Yb\pgfmathresult
    \coordinate (B) at (\Xb,\Yb);
	\pgfmathparse{\xleft+\xscale}		\let\Xc\pgfmathresult
    \pgfmathparse{\ybottom}		\let\Yc\pgfmathresult
    \coordinate (C) at (\Xc,\Yc);
	\pgfmathparse{\xleft+\xscale}		\let\Xd\pgfmathresult
    \pgfmathparse{\ybottom+\yscale}		\let\Yd\pgfmathresult
    \coordinate (D) at (\Xd,\Yd);
\draw[line width=1pt, ->-] (A) .. controls (B) and (C) .. (D);
}
\newcommand{\ids}[5]{
\pgfmathsetmacro{\xl}{{#1}}
\pgfmathsetmacro{\yb}{{#2}}
\pgfmathsetmacro{\num}{{#5}}
\pgfmathsetmacro{\yheight}{{#4}}
\pgfmathsetmacro{\yt}{\yb+\yheight};
\pgfmathsetmacro{\xscale}{{#3}};
	\foreach \x in {1,...,\num}{
		\pgfmathparse{\xl+(\x-1)*\xscale}\let\xc\pgfmathresult;
		\draw[line width=1pt] (\xc,\yb) -- (\xc, \yt);
	}
}
\newcommand{\idsup}[5]{
\pgfmathsetmacro{\xl}{{#1}}
\pgfmathsetmacro{\yb}{{#2}}
\pgfmathsetmacro{\num}{{#5}}
\pgfmathsetmacro{\yheight}{{#4}}
\pgfmathsetmacro{\yt}{\yb+\yheight};
\pgfmathsetmacro{\xscale}{{#3}};
	\foreach \x in {1,...,\num}{
		\pgfmathparse{\xl+(\x-1)*\xscale}\let\xc\pgfmathresult
		\draw[line width=1pt,->-] (\xc,\yb) -- (\xc, \yt);
	}
}
\newcommand{\idsdown}[5]{
\pgfmathsetmacro{\xl}{{#1}}
\pgfmathsetmacro{\yb}{{#2}}
\pgfmathsetmacro{\num}{{#5}}
\pgfmathsetmacro{\yheight}{{#4}}
\pgfmathsetmacro{\yt}{\yb+\yheight};
\pgfmathsetmacro{\xscale}{{#3}};
	\foreach \x in {1,...,\num}{
		\pgfmathparse{\xl+(\x-1)*\xscale}\let\xc\pgfmathresult
		\draw[line width=1pt,-<-] (\xc,\yb) -- (\xc, \yt);
	}
}
\newcommand{\rcross}[4]{
\tikzstyle{cross line}=[preaction={draw=white, -,line width=10pt}];
\pgfmathsetmacro{\xl}{{#1}}
\pgfmathsetmacro{\yb}{{#2}}
\pgfmathsetmacro{\yheight}{{#3}}
\pgfmathsetmacro{\xscale}{{#4}}
\pgfmathsetmacro{\yt}{\yb+\yheight};
\pgfmathsetmacro{\xr}{\xscale+\xl};
\pgfmathsetmacro{\yc}{\yb+\yheight/2};
\pgfmathsetmacro{\xc}{\xscale/2+\xl};

	\pgfmathparse{\xl}		\let\Xa\pgfmathresult
    \pgfmathparse{\yt}		\let\Ya\pgfmathresult
    \coordinate (A) at (\Xa,\Ya);
	\pgfmathparse{\xl+(0.3*\xscale)}		\let\Xaa\pgfmathresult
    \pgfmathparse{\yb+(0.9*\yheight)}		\let\Yaa\pgfmathresult
    \coordinate (A') at (\Xaa,\Yaa);
	\pgfmathparse{\xl}		\let\Xb\pgfmathresult
    \pgfmathparse{\yb}		\let\Yb\pgfmathresult
    \coordinate (B) at (\Xb,\Yb);
	\pgfmathparse{\xl+(0.3*\xscale)}		\let\Xbb\pgfmathresult
    \pgfmathparse{\yb+(0.1*\yheight)}		\let\Ybb\pgfmathresult
    \coordinate (B') at (\Xbb,\Ybb);
	\pgfmathparse{\xr}		\let\Xc\pgfmathresult
    \pgfmathparse{\yb}		\let\Yc\pgfmathresult
    \coordinate (C) at (\Xc,\Yc);
	\pgfmathparse{\xl+(0.7*\xscale)}		\let\Xcc\pgfmathresult
    \pgfmathparse{\yb+(0.1*\yheight)}		\let\Ycc\pgfmathresult
    \coordinate (C') at (\Xcc,\Ycc);
	\pgfmathparse{\xr}		\let\Xd\pgfmathresult
    \pgfmathparse{\yt}		\let\Yd\pgfmathresult
    \coordinate (D) at (\Xd,\Yd);
	\pgfmathparse{\xl+(0.7*\xscale)}		\let\Xdd\pgfmathresult
    \pgfmathparse{\yb+(0.9*\yheight)}		\let\Ydd\pgfmathresult
    \coordinate (D') at (\Xdd,\Ydd);
	\pgfmathparse{\xc}		\let\Xe\pgfmathresult
    \pgfmathparse{\yc}		\let\Ye\pgfmathresult
    \coordinate (E) at (\Xe,\Ye);

\draw[line width=1pt] (A) .. controls (B') and (D') .. (C);
\draw (\xc, \yc) node[shape=circle, fill=white] {};
\draw[line width=1pt] (B) .. controls (A') and (C') .. (D);
}
\newcommand{\rcrossup}[4]{
\tikzstyle{cross line}=[preaction={draw=white, -,line width=10pt}];
\pgfmathsetmacro{\xl}{{#1}}
\pgfmathsetmacro{\yb}{{#2}}
\pgfmathsetmacro{\yheight}{{#3}}
\pgfmathsetmacro{\xscale}{{#4}}
\pgfmathsetmacro{\yt}{\yb+\yheight};
\pgfmathsetmacro{\xr}{\xscale+\xl};
\pgfmathsetmacro{\yc}{\yb+\yheight/2};
\pgfmathsetmacro{\xc}{\xscale/2+\xl};

	\pgfmathparse{\xl}		\let\Xa\pgfmathresult
    \pgfmathparse{\yt}		\let\Ya\pgfmathresult
    \coordinate (A) at (\Xa,\Ya);
	\pgfmathparse{\xl+(0.3*\xscale)}		\let\Xaa\pgfmathresult
    \pgfmathparse{\yb+(0.9*\yheight)}		\let\Yaa\pgfmathresult
    \coordinate (A') at (\Xaa,\Yaa);
	\pgfmathparse{\xl}		\let\Xb\pgfmathresult
    \pgfmathparse{\yb}		\let\Yb\pgfmathresult
    \coordinate (B) at (\Xb,\Yb);
	\pgfmathparse{\xl+(0.3*\xscale)}		\let\Xbb\pgfmathresult
    \pgfmathparse{\yb+(0.1*\yheight)}		\let\Ybb\pgfmathresult
    \coordinate (B') at (\Xbb,\Ybb);
	\pgfmathparse{\xr}		\let\Xc\pgfmathresult
    \pgfmathparse{\yb}		\let\Yc\pgfmathresult
    \coordinate (C) at (\Xc,\Yc);
	\pgfmathparse{\xl+(0.7*\xscale)}		\let\Xcc\pgfmathresult
    \pgfmathparse{\yb+(0.1*\yheight)}		\let\Ycc\pgfmathresult
    \coordinate (C') at (\Xcc,\Ycc);
	\pgfmathparse{\xr}		\let\Xd\pgfmathresult
    \pgfmathparse{\yt}		\let\Yd\pgfmathresult
    \coordinate (D) at (\Xd,\Yd);
	\pgfmathparse{\xl+(0.7*\xscale)}		\let\Xdd\pgfmathresult
    \pgfmathparse{\yb+(0.9*\yheight)}		\let\Ydd\pgfmathresult
    \coordinate (D') at (\Xdd,\Ydd);
	\pgfmathparse{\xc}		\let\Xe\pgfmathresult
    \pgfmathparse{\yc}		\let\Ye\pgfmathresult
    \coordinate (E) at (\Xe,\Ye);

\draw[line width=1pt, <-] (A) .. controls (B') and (D') .. (C);
\draw (\xc, \yc) node[shape=circle, fill=white] {};
\draw[line width=1pt, ->] (B) .. controls (A') and (C') .. (D);
}
\newcommand{\NESE}[4]{
\tikzstyle{cross line}=[preaction={draw=white, -,line width=10pt}];
\pgfmathsetmacro{\xl}{{#1}}
\pgfmathsetmacro{\yb}{{#2}}
\pgfmathsetmacro{\yheight}{{#3}}
\pgfmathsetmacro{\xscale}{{#4}}
\pgfmathsetmacro{\yt}{\yb+\yheight};
\pgfmathsetmacro{\xr}{\xscale+\xl};
\pgfmathsetmacro{\yc}{\yb+\yheight/2};
\pgfmathsetmacro{\xc}{\xscale/2+\xl};

	\pgfmathparse{\xl}		\let\Xa\pgfmathresult
    \pgfmathparse{\yt}		\let\Ya\pgfmathresult
    \coordinate (A) at (\Xa,\Ya);
	\pgfmathparse{\xl+(0.3*\xscale)}		\let\Xaa\pgfmathresult
    \pgfmathparse{\yb+(0.9*\yheight)}		\let\Yaa\pgfmathresult
    \coordinate (A') at (\Xaa,\Yaa);
	\pgfmathparse{\xl}		\let\Xb\pgfmathresult
    \pgfmathparse{\yb}		\let\Yb\pgfmathresult
    \coordinate (B) at (\Xb,\Yb);
	\pgfmathparse{\xl+(0.3*\xscale)}		\let\Xbb\pgfmathresult
    \pgfmathparse{\yb+(0.1*\yheight)}		\let\Ybb\pgfmathresult
    \coordinate (B') at (\Xbb,\Ybb);
	\pgfmathparse{\xr}		\let\Xc\pgfmathresult
    \pgfmathparse{\yb}		\let\Yc\pgfmathresult
    \coordinate (C) at (\Xc,\Yc);
	\pgfmathparse{\xl+(0.7*\xscale)}		\let\Xcc\pgfmathresult
    \pgfmathparse{\yb+(0.1*\yheight)}		\let\Ycc\pgfmathresult
    \coordinate (C') at (\Xcc,\Ycc);
	\pgfmathparse{\xr}		\let\Xd\pgfmathresult
    \pgfmathparse{\yt}		\let\Yd\pgfmathresult
    \coordinate (D) at (\Xd,\Yd);
	\pgfmathparse{\xl+(0.7*\xscale)}		\let\Xdd\pgfmathresult
    \pgfmathparse{\yb+(0.9*\yheight)}		\let\Ydd\pgfmathresult
    \coordinate (D') at (\Xdd,\Ydd);
	\pgfmathparse{\xc}		\let\Xe\pgfmathresult
    \pgfmathparse{\yc}		\let\Ye\pgfmathresult
    \coordinate (E) at (\Xe,\Ye);

\draw[line width=1pt, ->] (A) .. controls (B') and (D') .. (C);
\draw (\xc, \yc) node[shape=circle, fill=white] {};
\draw[line width=1pt, ->] (B) .. controls (A') and (C') .. (D);
}
\newcommand{\SWNW}[4]{
\tikzstyle{cross line}=[preaction={draw=white, -,line width=10pt}];
\pgfmathsetmacro{\xl}{{#1}}
\pgfmathsetmacro{\yb}{{#2}}
\pgfmathsetmacro{\yheight}{{#3}}
\pgfmathsetmacro{\xscale}{{#4}}
\pgfmathsetmacro{\yt}{\yb+\yheight};
\pgfmathsetmacro{\xr}{\xscale+\xl};
\pgfmathsetmacro{\yc}{\yb+\yheight/2};
\pgfmathsetmacro{\xc}{\xscale/2+\xl};

	\pgfmathparse{\xl}		\let\Xa\pgfmathresult
    \pgfmathparse{\yt}		\let\Ya\pgfmathresult
    \coordinate (A) at (\Xa,\Ya);
	\pgfmathparse{\xl+(0.3*\xscale)}		\let\Xaa\pgfmathresult
    \pgfmathparse{\yb+(0.9*\yheight)}		\let\Yaa\pgfmathresult
    \coordinate (A') at (\Xaa,\Yaa);
	\pgfmathparse{\xl}		\let\Xb\pgfmathresult
    \pgfmathparse{\yb}		\let\Yb\pgfmathresult
    \coordinate (B) at (\Xb,\Yb);
	\pgfmathparse{\xl+(0.3*\xscale)}		\let\Xbb\pgfmathresult
    \pgfmathparse{\yb+(0.1*\yheight)}		\let\Ybb\pgfmathresult
    \coordinate (B') at (\Xbb,\Ybb);
	\pgfmathparse{\xr}		\let\Xc\pgfmathresult
    \pgfmathparse{\yb}		\let\Yc\pgfmathresult
    \coordinate (C) at (\Xc,\Yc);
	\pgfmathparse{\xl+(0.7*\xscale)}		\let\Xcc\pgfmathresult
    \pgfmathparse{\yb+(0.1*\yheight)}		\let\Ycc\pgfmathresult
    \coordinate (C') at (\Xcc,\Ycc);
	\pgfmathparse{\xr}		\let\Xd\pgfmathresult
    \pgfmathparse{\yt}		\let\Yd\pgfmathresult
    \coordinate (D) at (\Xd,\Yd);
	\pgfmathparse{\xl+(0.7*\xscale)}		\let\Xdd\pgfmathresult
    \pgfmathparse{\yb+(0.9*\yheight)}		\let\Ydd\pgfmathresult
    \coordinate (D') at (\Xdd,\Ydd);
	\pgfmathparse{\xc}		\let\Xe\pgfmathresult
    \pgfmathparse{\yc}		\let\Ye\pgfmathresult
    \coordinate (E) at (\Xe,\Ye);

\draw[line width=1pt, <-] (A) .. controls (B') and (D') .. (C);
\draw (\xc, \yc) node[shape=circle, fill=white] {};
\draw[line width=1pt, <-] (B) .. controls (A') and (C') .. (D);
}
\newcommand{\SENE}[4]{
\tikzstyle{cross line}=[preaction={draw=white, -,line width=10pt}];
\pgfmathsetmacro{\xl}{{#1}}
\pgfmathsetmacro{\yb}{{#2}}
\pgfmathsetmacro{\yheight}{{#3}}
\pgfmathsetmacro{\xscale}{{#4}}
\pgfmathsetmacro{\yt}{\yb+\yheight};
\pgfmathsetmacro{\xr}{\xscale+\xl};
\pgfmathsetmacro{\yc}{\yb+\yheight/2};
\pgfmathsetmacro{\xc}{\xscale/2+\xl};

	\pgfmathparse{\xl}		\let\Xa\pgfmathresult
    \pgfmathparse{\yt}		\let\Ya\pgfmathresult
    \coordinate (A) at (\Xa,\Ya);
	\pgfmathparse{\xl+(0.3*\xscale)}		\let\Xaa\pgfmathresult
    \pgfmathparse{\yb+(0.9*\yheight)}		\let\Yaa\pgfmathresult
    \coordinate (A') at (\Xaa,\Yaa);
	\pgfmathparse{\xl}		\let\Xb\pgfmathresult
    \pgfmathparse{\yb}		\let\Yb\pgfmathresult
    \coordinate (B) at (\Xb,\Yb);
	\pgfmathparse{\xl+(0.3*\xscale)}		\let\Xbb\pgfmathresult
    \pgfmathparse{\yb+(0.1*\yheight)}		\let\Ybb\pgfmathresult
    \coordinate (B') at (\Xbb,\Ybb);
	\pgfmathparse{\xr}		\let\Xc\pgfmathresult
    \pgfmathparse{\yb}		\let\Yc\pgfmathresult
    \coordinate (C) at (\Xc,\Yc);
	\pgfmathparse{\xl+(0.7*\xscale)}		\let\Xcc\pgfmathresult
    \pgfmathparse{\yb+(0.1*\yheight)}		\let\Ycc\pgfmathresult
    \coordinate (C') at (\Xcc,\Ycc);
	\pgfmathparse{\xr}		\let\Xd\pgfmathresult
    \pgfmathparse{\yt}		\let\Yd\pgfmathresult
    \coordinate (D) at (\Xd,\Yd);
	\pgfmathparse{\xl+(0.7*\xscale)}		\let\Xdd\pgfmathresult
    \pgfmathparse{\yb+(0.9*\yheight)}		\let\Ydd\pgfmathresult
    \coordinate (D') at (\Xdd,\Ydd);
	\pgfmathparse{\xc}		\let\Xe\pgfmathresult
    \pgfmathparse{\yc}		\let\Ye\pgfmathresult
    \coordinate (E) at (\Xe,\Ye);

\draw[line width=1pt, ->] (B) .. controls (A') and (C') .. (D);
\draw (\xc, \yc) node[shape=circle, fill=white] {};
\draw[line width=1pt, ->] (A) .. controls (B') and (D') .. (C);
}
\newcommand{\NWSW}[4]{
\tikzstyle{cross line}=[preaction={draw=white, -,line width=10pt}];
\pgfmathsetmacro{\xl}{{#1}}
\pgfmathsetmacro{\yb}{{#2}}
\pgfmathsetmacro{\yheight}{{#3}}
\pgfmathsetmacro{\xscale}{{#4}}
\pgfmathsetmacro{\yt}{\yb+\yheight};
\pgfmathsetmacro{\xr}{\xscale+\xl};
\pgfmathsetmacro{\yc}{\yb+\yheight/2};
\pgfmathsetmacro{\xc}{\xscale/2+\xl};

	\pgfmathparse{\xl}		\let\Xa\pgfmathresult
    \pgfmathparse{\yt}		\let\Ya\pgfmathresult
    \coordinate (A) at (\Xa,\Ya);
	\pgfmathparse{\xl+(0.3*\xscale)}		\let\Xaa\pgfmathresult
    \pgfmathparse{\yb+(0.9*\yheight)}		\let\Yaa\pgfmathresult
    \coordinate (A') at (\Xaa,\Yaa);
	\pgfmathparse{\xl}		\let\Xb\pgfmathresult
    \pgfmathparse{\yb}		\let\Yb\pgfmathresult
    \coordinate (B) at (\Xb,\Yb);
	\pgfmathparse{\xl+(0.3*\xscale)}		\let\Xbb\pgfmathresult
    \pgfmathparse{\yb+(0.1*\yheight)}		\let\Ybb\pgfmathresult
    \coordinate (B') at (\Xbb,\Ybb);
	\pgfmathparse{\xr}		\let\Xc\pgfmathresult
    \pgfmathparse{\yb}		\let\Yc\pgfmathresult
    \coordinate (C) at (\Xc,\Yc);
	\pgfmathparse{\xl+(0.7*\xscale)}		\let\Xcc\pgfmathresult
    \pgfmathparse{\yb+(0.1*\yheight)}		\let\Ycc\pgfmathresult
    \coordinate (C') at (\Xcc,\Ycc);
	\pgfmathparse{\xr}		\let\Xd\pgfmathresult
    \pgfmathparse{\yt}		\let\Yd\pgfmathresult
    \coordinate (D) at (\Xd,\Yd);
	\pgfmathparse{\xl+(0.7*\xscale)}		\let\Xdd\pgfmathresult
    \pgfmathparse{\yb+(0.9*\yheight)}		\let\Ydd\pgfmathresult
    \coordinate (D') at (\Xdd,\Ydd);
	\pgfmathparse{\xc}		\let\Xe\pgfmathresult
    \pgfmathparse{\yc}		\let\Ye\pgfmathresult
    \coordinate (E) at (\Xe,\Ye);

\draw[line width=1pt, <-] (B) .. controls (A') and (C') .. (D);
\draw (\xc, \yc) node[shape=circle, fill=white] {};
\draw[line width=1pt, <-] (A) .. controls (B') and (D') .. (C);
}
\newcommand{\rcrossdown}[4]{
\tikzstyle{cross line}=[preaction={draw=white, -,line width=10pt}];
\pgfmathsetmacro{\xl}{{#1}}
\pgfmathsetmacro{\yb}{{#2}}
\pgfmathsetmacro{\yheight}{{#3}}
\pgfmathsetmacro{\xscale}{{#4}}
\pgfmathsetmacro{\yt}{\yb+\yheight};
\pgfmathsetmacro{\xr}{\xscale+\xl};
\pgfmathsetmacro{\yc}{\yb+\yheight/2};
\pgfmathsetmacro{\xc}{\xscale/2+\xl};

	\pgfmathparse{\xl}		\let\Xa\pgfmathresult
    \pgfmathparse{\yt}		\let\Ya\pgfmathresult
    \coordinate (A) at (\Xa,\Ya);
	\pgfmathparse{\xl+(0.3*\xscale)}		\let\Xaa\pgfmathresult
    \pgfmathparse{\yb+(0.9*\yheight)}		\let\Yaa\pgfmathresult
    \coordinate (A') at (\Xaa,\Yaa);
	\pgfmathparse{\xl}		\let\Xb\pgfmathresult
    \pgfmathparse{\yb}		\let\Yb\pgfmathresult
    \coordinate (B) at (\Xb,\Yb);
	\pgfmathparse{\xl+(0.3*\xscale)}		\let\Xbb\pgfmathresult
    \pgfmathparse{\yb+(0.1*\yheight)}		\let\Ybb\pgfmathresult
    \coordinate (B') at (\Xbb,\Ybb);
	\pgfmathparse{\xr}		\let\Xc\pgfmathresult
    \pgfmathparse{\yb}		\let\Yc\pgfmathresult
    \coordinate (C) at (\Xc,\Yc);
	\pgfmathparse{\xl+(0.7*\xscale)}		\let\Xcc\pgfmathresult
    \pgfmathparse{\yb+(0.1*\yheight)}		\let\Ycc\pgfmathresult
    \coordinate (C') at (\Xcc,\Ycc);
	\pgfmathparse{\xr}		\let\Xd\pgfmathresult
    \pgfmathparse{\yt}		\let\Yd\pgfmathresult
    \coordinate (D) at (\Xd,\Yd);
	\pgfmathparse{\xl+(0.7*\xscale)}		\let\Xdd\pgfmathresult
    \pgfmathparse{\yb+(0.9*\yheight)}		\let\Ydd\pgfmathresult
    \coordinate (D') at (\Xdd,\Ydd);
	\pgfmathparse{\xc}		\let\Xe\pgfmathresult
    \pgfmathparse{\yc}		\let\Ye\pgfmathresult
    \coordinate (E) at (\Xe,\Ye);

\draw[line width=1pt, ->] (A) .. controls (B') and (D') .. (C);
\draw (\xc, \yc) node[shape=circle, fill=white] {};
\draw[line width=1pt, <-] (B) .. controls (A') and (C') .. (D);
}
\newcommand{\lcross}[4]{
\tikzstyle{cross line}=[preaction={draw=white, -,line width=10pt}];
\pgfmathsetmacro{\xl}{{#1}}
\pgfmathsetmacro{\yb}{{#2}}
\pgfmathsetmacro{\yheight}{{#3}}
\pgfmathsetmacro{\xscale}{{#4}}
\pgfmathsetmacro{\yt}{\yb+\yheight};
\pgfmathsetmacro{\xr}{\xscale+\xl};
\pgfmathsetmacro{\yc}{\yb+\yheight/2};
\pgfmathsetmacro{\xc}{\xscale/2+\xl};

	\pgfmathparse{\xl}		\let\Xa\pgfmathresult
    \pgfmathparse{\yt}		\let\Ya\pgfmathresult
    \coordinate (A) at (\Xa,\Ya);
	\pgfmathparse{\xl+(0.3*\xscale)}		\let\Xaa\pgfmathresult
    \pgfmathparse{\yb+(0.9*\yheight)}		\let\Yaa\pgfmathresult
    \coordinate (A') at (\Xaa,\Yaa);
	\pgfmathparse{\xl}		\let\Xb\pgfmathresult
    \pgfmathparse{\yb}		\let\Yb\pgfmathresult
    \coordinate (B) at (\Xb,\Yb);
	\pgfmathparse{\xl+(0.3*\xscale)}		\let\Xbb\pgfmathresult
    \pgfmathparse{\yb+(0.1*\yheight)}		\let\Ybb\pgfmathresult
    \coordinate (B') at (\Xbb,\Ybb);
	\pgfmathparse{\xr}		\let\Xc\pgfmathresult
    \pgfmathparse{\yb}		\let\Yc\pgfmathresult
    \coordinate (C) at (\Xc,\Yc);
	\pgfmathparse{\xl+(0.7*\xscale)}		\let\Xcc\pgfmathresult
    \pgfmathparse{\yb+(0.1*\yheight)}		\let\Ycc\pgfmathresult
    \coordinate (C') at (\Xcc,\Ycc);
	\pgfmathparse{\xr}		\let\Xd\pgfmathresult
    \pgfmathparse{\yt}		\let\Yd\pgfmathresult
    \coordinate (D) at (\Xd,\Yd);
	\pgfmathparse{\xl+(0.7*\xscale)}		\let\Xdd\pgfmathresult
    \pgfmathparse{\yb+(0.9*\yheight)}		\let\Ydd\pgfmathresult
    \coordinate (D') at (\Xdd,\Ydd);
	\pgfmathparse{\xc}		\let\Xe\pgfmathresult
    \pgfmathparse{\yc}		\let\Ye\pgfmathresult
    \coordinate (E) at (\Xe,\Ye);

\draw[line width=1pt] (B) .. controls (A') and (C') .. (D);
\draw (\xc, \yc) node[shape=circle, fill=white] {};
\draw[line width=1pt] (A) .. controls (B') and (D') .. (C);
}
\newcommand{\lcrossup}[4]{
\tikzstyle{cross line}=[preaction={draw=white, -,line width=10pt}];
\pgfmathsetmacro{\xl}{{#1}}
\pgfmathsetmacro{\yb}{{#2}}
\pgfmathsetmacro{\yheight}{{#3}}
\pgfmathsetmacro{\xscale}{{#4}}
\pgfmathsetmacro{\yt}{\yb+\yheight};
\pgfmathsetmacro{\xr}{\xscale+\xl};
\pgfmathsetmacro{\yc}{\yb+\yheight/2};
\pgfmathsetmacro{\xc}{\xscale/2+\xl};

	\pgfmathparse{\xl}		\let\Xa\pgfmathresult
    \pgfmathparse{\yt}		\let\Ya\pgfmathresult
    \coordinate (A) at (\Xa,\Ya);
	\pgfmathparse{\xl+(0.3*\xscale)}		\let\Xaa\pgfmathresult
    \pgfmathparse{\yb+(0.9*\yheight)}		\let\Yaa\pgfmathresult
    \coordinate (A') at (\Xaa,\Yaa);
	\pgfmathparse{\xl}		\let\Xb\pgfmathresult
    \pgfmathparse{\yb}		\let\Yb\pgfmathresult
    \coordinate (B) at (\Xb,\Yb);
	\pgfmathparse{\xl+(0.3*\xscale)}		\let\Xbb\pgfmathresult
    \pgfmathparse{\yb+(0.1*\yheight)}		\let\Ybb\pgfmathresult
    \coordinate (B') at (\Xbb,\Ybb);
	\pgfmathparse{\xr}		\let\Xc\pgfmathresult
    \pgfmathparse{\yb}		\let\Yc\pgfmathresult
    \coordinate (C) at (\Xc,\Yc);
	\pgfmathparse{\xl+(0.7*\xscale)}		\let\Xcc\pgfmathresult
    \pgfmathparse{\yb+(0.1*\yheight)}		\let\Ycc\pgfmathresult
    \coordinate (C') at (\Xcc,\Ycc);
	\pgfmathparse{\xr}		\let\Xd\pgfmathresult
    \pgfmathparse{\yt}		\let\Yd\pgfmathresult
    \coordinate (D) at (\Xd,\Yd);
	\pgfmathparse{\xl+(0.7*\xscale)}		\let\Xdd\pgfmathresult
    \pgfmathparse{\yb+(0.9*\yheight)}		\let\Ydd\pgfmathresult
    \coordinate (D') at (\Xdd,\Ydd);
	\pgfmathparse{\xc}		\let\Xe\pgfmathresult
    \pgfmathparse{\yc}		\let\Ye\pgfmathresult
    \coordinate (E) at (\Xe,\Ye);

\draw[line width=1pt, ->] (B) .. controls (A') and (C') .. (D);
\draw (\xc, \yc) node[shape=circle, fill=white] {};
\draw[line width=1pt, <-] (A) .. controls (B') and (D') .. (C);
}
\newcommand{\lcrossdown}[4]{
\tikzstyle{cross line}=[preaction={draw=white, -,line width=10pt}];
\pgfmathsetmacro{\xl}{{#1}}
\pgfmathsetmacro{\yb}{{#2}}
\pgfmathsetmacro{\yheight}{{#3}}
\pgfmathsetmacro{\xscale}{{#4}}
\pgfmathsetmacro{\yt}{\yb+\yheight};
\pgfmathsetmacro{\xr}{\xscale+\xl};
\pgfmathsetmacro{\yc}{\yb+\yheight/2};
\pgfmathsetmacro{\xc}{\xscale/2+\xl};

	\pgfmathparse{\xl}		\let\Xa\pgfmathresult
    \pgfmathparse{\yt}		\let\Ya\pgfmathresult
    \coordinate (A) at (\Xa,\Ya);
	\pgfmathparse{\xl+(0.3*\xscale)}		\let\Xaa\pgfmathresult
    \pgfmathparse{\yb+(0.9*\yheight)}		\let\Yaa\pgfmathresult
    \coordinate (A') at (\Xaa,\Yaa);
	\pgfmathparse{\xl}		\let\Xb\pgfmathresult
    \pgfmathparse{\yb}		\let\Yb\pgfmathresult
    \coordinate (B) at (\Xb,\Yb);
	\pgfmathparse{\xl+(0.3*\xscale)}		\let\Xbb\pgfmathresult
    \pgfmathparse{\yb+(0.1*\yheight)}		\let\Ybb\pgfmathresult
    \coordinate (B') at (\Xbb,\Ybb);
	\pgfmathparse{\xr}		\let\Xc\pgfmathresult
    \pgfmathparse{\yb}		\let\Yc\pgfmathresult
    \coordinate (C) at (\Xc,\Yc);
	\pgfmathparse{\xl+(0.7*\xscale)}		\let\Xcc\pgfmathresult
    \pgfmathparse{\yb+(0.1*\yheight)}		\let\Ycc\pgfmathresult
    \coordinate (C') at (\Xcc,\Ycc);
	\pgfmathparse{\xr}		\let\Xd\pgfmathresult
    \pgfmathparse{\yt}		\let\Yd\pgfmathresult
    \coordinate (D) at (\Xd,\Yd);
	\pgfmathparse{\xl+(0.7*\xscale)}		\let\Xdd\pgfmathresult
    \pgfmathparse{\yb+(0.9*\yheight)}		\let\Ydd\pgfmathresult
    \coordinate (D') at (\Xdd,\Ydd);
	\pgfmathparse{\xc}		\let\Xe\pgfmathresult
    \pgfmathparse{\yc}		\let\Ye\pgfmathresult
    \coordinate (E) at (\Xe,\Ye);

\draw[line width=1pt, <-] (B) .. controls (A') and (C') .. (D);
\draw (\xc, \yc) node[shape=circle, fill=white] {};
\draw[line width=1pt, ->] (A) .. controls (B') and (D') .. (C);
}
\newcommand{\standin}[5]{
\tikzstyle{cross line}=[preaction={draw=white, -,line width=10pt}];
\pgfmathsetmacro{\xl}{{#1}}
\pgfmathsetmacro{\yb}{{#2}}
\pgfmathsetmacro{\yheight}{{#3}}
\pgfmathsetmacro{\xscale}{{#4}}
\pgfmathsetmacro{\yt}{\yb+\yheight};
\pgfmathsetmacro{\xr}{\xscale+\xl};
\pgfmathsetmacro{\yc}{\yb+\yheight/2};
\pgfmathsetmacro{\xc}{\xscale/2+\xl};
\draw[line width=1pt] (\xl,\yb) rectangle (\xr, \yt);
\draw (\xc, \yc) node {\textbf{#5}};
}
\title[]{The knot invariant associated to two-parameter quantum algebras}
\author[Z. Fan and J. Xing]{Zhaobing Fan and Junjing Xing}
\address{College of Intelligent Systems Science and Engineering, Harbin Engineering University, Harbin,  China, 150001}
\address{College of Mathematical Sciences, Harbin Engineering University, Harbin,  China, 150001}
\email{fanzhaobing@hrbeu.edu.cn  (Fan)}
\address{College of Intelligent Systems Science and Engineering, Harbin Engineering University, Harbin,  China, 150001}
\email{xingjunjing2017@hrbeu.edu.cn (Xing)}
\date{\today}
\keywords{quantum algebra, knot invariant, R-matrix}
\begin{document}
\begin{abstract}
Using the skew-Hopf pairing, we obtain $\mathcal{R}$-matrix
for the two-parameter quantum algebra $U_{v,t}$.
We further construct a strict monoidal functor $\mathcal{T}$ from
the tangle category $(\rm{OTa},\otimes, \emptyset)$ to the
category $(\rm{Mod},\otimes, \mathbb{Q}(v,t))$ of $U_{v,t}$-modules .
As a consequence, the quantum knot invariant of the tangle $L$ of type $(n,n)$
is obtained by the action of $\mathcal{T}$ on the closure $\tilde{L}$ of $L$.
\end{abstract}
\maketitle

\setcounter{tocdepth}{1}
\tableofcontents

\section{Introduction}
Knot theory is the mathematical discipline with the unusually diverse applications,
such as statistical mechanics \cite{Kau11},
symbolic logic and set theory \cite{Kau}, quantum field theory \cite{W}, quantum computing \cite{N}, etc.
Reshetikhin and Turaev \cite{RT90} related quantum algebras to knot invariants,
often referred to as quantum invariants.
They generalized the Jones polynomial of links and the related Jones-Conway and Kauffman polynomials to the case of graphs in $\mathbb{R}^3$.
Inspired by the result, a large number of researchers began to pay attention to
quantum invariants, such as Zhang \cite{ZGB}, Kauffman \cite{KM} and Clark \cite{C}.

In \cite{FL}, the first author and Li provided a noval presentation of
the two-parameter quantum algebra $U_{v,t}(\mathfrak g)$ by a geometric approach, where
both parameters $v$ and $t$ have geometric meaning.
Moreover, this presentation unifies the various quantum algebras in literature.
By various specialization, one can obtain one-parameter quantum algebras \cite{Lusztigbook},
two-parameter quantum algebras \cite{BW}, quantum superalgebras \cite{CFYW} and multi-parameter quantum algebras \cite{HPR}.
It is a natural question that whether this two-parameter quantum algebra
provide a new knot invariant.
This is what we will explore in this paper.
In \cite{C}, Clark gave a partial answer to this question by proving that
the quantum knot invariants for $\mathfrak{osp}(1|2n)$ and $\mathfrak{so}(2n+1)$ are essentially the same.

The solution of the Yang-Baxter equation is called $\mathcal{R}$-matrix
which connects quantum algebras to the knot invariant theory.
In the construction of quantum knot invariants,
invariance under the Reidemeister III move holds naturally by
attaching a copy of the $\mathcal{R}$-matrix to each crossing.
The construction of $\mathcal{R}$-matrices for various quantum groups is very meaningful and interesting.
In \cite[Chapter 32]{Lusztigbook}, Lusztig provided a framework to construct the $\mathcal{R}$-matrix of one-parameter quantum algebras
via the quasi-$\mathcal{R}$-matrix.
In \cite{FX}, we defined a skew-Hopf pairing on the deformed quantum algebra
which unifies various quantum algebras in literatures,
such as one-parameter quantum algebras \cite{Lusztigbook},
two-parameter quantum algebras \cite{BW}, quantum superalgebras \cite{CFYW} and multi-parameter quantum algebras \cite{HPR}.
This provided a tool to construct $\mathcal{R}$-matrix for quantum algebras other than one-parameter one.
For instance, in \cite{BW}, Benkart and Witherspoon constructed the $\mathcal{R}$-matrix
for two-parameter quantum algebra $U_{r,s}(\mathfrak{sl}_n)$
by the Hopf pairing  which can be viewed as a special case of that in \cite{FX}.

In this paper, we construct the $\mathcal{R}$-matrix for the two-parameter quantum algebra $U_{v,t}$ by using skew-Hopf pairing.
This recovers the constructions of $\mathcal{R}$-matrix in \cite{Lusztigbook} and \cite{BW} under certain assumptions.
We further provide the functor $\mathcal{T}:(\rm{OTa}, \otimes, \emptyset)\rightarrow (\rm{Mod},\otimes, \mathbb{Q}(v,t))$,
where $\rm{OTa}$ and $\rm{Mod}$ are the categories of tangles and $U_{v,t}$-modules, respectively.
This produces the machinery and correspondence for the construction of quantum invariants
via representations of two-parameter quantum algebra $U_{v,t}$.
Furthermore, given a tangle $L$ of type $(n,n)$, we can get an endomorphism $\mathcal{T}(\tilde{L})$
of the ground field $\mathbb{Q}(v,t)$, where $\tilde{L}$ is the closure of $L$.

This paper is organized as follows.
In Section \ref{sec:twoparameter}, we recall the definition of two-parameter quantum algebra $U_{v,t}$ from \cite{FL}
and formulate the quasi-$\mathcal{R}$-matrix $\Theta$ of $U_{v,t}$. Part of the results are new for two-parameter quantum groups.
In Section \ref{sec:rmatrix}, we construct the $\mathcal{R}$-matrix of two-parameter quantum algebras $U_{v,t}$. 
In Section \ref{sec:knotinvariant}, we construct the functor between the categories of tangles and the categories of $U_{v,t}$-modules.

\noindent\textbf{Acknowledgements.}
Z. Fan is partially supported by the NSF of China grant 11671108,
the NSF of Heilongjiang Province grant JQ2020A001 and
the Fundamental Research Funds for the central universities.

\section{The two-parameter quantum algebra $U_{v,t}$}\label{sec:twoparameter}

We briefly review the definition of the two-parameter quantum algebra $U_{v,t}$ in \cite{FL}.

Given a Cartan datum $(I, \cdot)$, let $\Omega=(\Omega_{ij})_{i,j\in I}$ be an integer matrix satisfying that
\begin{itemize}
  \item[(a)] $\Omega_{ii} \in \mathbb{Z}_{>0}$, $\Omega_{ij}\in \mathbb{Z}_{\leq 0}$ for all $i\neq j \in I$;
  \item[(b)] $\frac{\Omega_{ij}+\Omega_{ji}}{\Omega_{ii}}\in \mathbb{Z}_{\leq 0}$ for all $i\neq j \in I$;
  \item[(c)] the greatest common divisor of all $\Omega_{ii}$ is equal to 1.
\end{itemize}

To $\Omega$, we associate the following three bilinear forms on $\mathbb{Z}[I]$.
\begin{eqnarray*}
\langle i, j\rangle &=&
\Omega_{ij}, \quad \hspace{45pt}\forall i, j\in I. \label{eq47}\\
\begin{bmatrix} i,j \end{bmatrix}&=& 2\delta_{ij} \Omega_{ii} -\Omega_{ij}, \quad \forall i, j\in I. \label{eq48}\\
i\cdot j&=&\langle i, j\rangle +\langle j,i\rangle,  \quad \forall i, j\in I. \label{eq49}
\end{eqnarray*}

\subsection{The free algebra $'\mathfrak{f}$}\label{sec:freealgebra}
For indeterminates $v$ and $t$,
 we set $v_i=v^{i\cdot i/2}$ and $t_i=t^{i\cdot i/2}$.
 Denoted by $v_{\nu}=\prod_i v_i^{\nu_i},\ t_{\nu}=\prod_i t_i^{\nu_i}$
 and ${\rm tr}(\nu) =\sum_{i\in I} \nu_i \in \mathbb{N}$,
 for any $\nu=\sum \nu_i i\in \mathbb{N}[I]$.

Let ${}'\! \mathfrak{f}$ be the free unital associative algebra over $\mathbb{Q}(v,t)$
generated by the symbols $ \theta_i,\ \forall i\in I$.
By setting the degree of the generator $\theta_i$ to be $i$,
the algebra ${}'\! \mathfrak{f}$ becomes an $\mathbb{N}[I]$-graded algebra.
For any $\nu\in \mathbb{N}[I]$, we denote by ${}'\!\mathfrak{f}_{\nu}$
the subspace of all homogenous elements of degree $\nu$.
We have ${}'\! \mathfrak{f}=\oplus_{\nu\in \mathbb{N}[I]}{}'\! \mathfrak{f}_{\nu}$
and denote by $|x|$ the degree of a homogenous element $x\in {}'\! \mathfrak{f}$.

\subsubsection{The tensor product ${}'\!\mathfrak{f}\otimes {}'\!\mathfrak{f}$}
On the tensor product ${}'\!\mathfrak{f}\otimes {}'\!\mathfrak{f}$,
we define an associative $\mathbb{Q}(v,t)$-algebra structure by
\begin{equation*}\label{eq25}
(x_1 \otimes x_2)(y_1 \otimes y_2)
=v^{|y_1|\cdot|x_2|}t^{\langle|y_1|,|x_2|\rangle-\langle |x_2|,|y_1|\rangle}x_1y_1 \otimes x_2y_2,
\end{equation*}
for homogeneous elements $x_1,x_2,y_1$ and $y_2$ in ${}'\!\mathfrak{f}$.
It is associative since the forms $\langle , \rangle$  and $``\cdot "$  are bilinear.

Let $r: {}'\!\mathfrak{f}\rightarrow  {}'\!\mathfrak{f}\otimes {}'\!\mathfrak{f}$
be the $\mathbb{Q}(v,t)$-algebra homomorphism such that
$$r(\theta_i)=\theta_i\otimes 1+1 \otimes \theta_i,\quad {\rm for\ all}\ i\in I.$$

\begin{prop}[]\label{prop:bilinearform}\cite[Proposition 13]{FL}
  There is a unique symmetric bilinear form {\rm (,)} on ${}'\!\mathfrak{f}$ with values in $\mathbb{Q}(v,t)$ such that
  \begin{itemize}
    \item[(a)] $(1,1)=1$;
    \item[(b)] $(\theta_i, \theta_j)=\delta_{ij}\frac{1}{1-v^{-2}_i}$,\quad for all $i, j \in I$;
    \item[(c)] $(x, y'y'')=(r(x), y' \otimes y'')$,\quad for all $x, y', y'' \in {}'\!\mathfrak{f}$;
    \item[(d)] $(x'x'', y)=(x'\otimes x'', r(y))$,\quad for all $x', x'', y \in {}'\!\mathfrak{f}$.
  \end{itemize}
  Here the bilinear form on ${}'\!\mathfrak{f} \otimes {}'\!\mathfrak{f}$ is defined by
  \begin{equation*}\label{eq74}
    (x_1 \otimes x_2, y_1 \otimes y_2)=t^{2[|x_1|,|x_2|]}(x_1, y_1) (x_2, y_2).
  \end{equation*}
\end{prop}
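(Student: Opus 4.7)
My plan is to follow the classical Lusztig-style proof (Chapter 1 of \cite{Lusztigbook}), with extra bookkeeping to track the new $t$-twists introduced in this two-parameter setting.

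For uniqueness, the idea is that (c) together with (a) and (b) already force the form on all of ${}'\!\mathfrak{f}$. I would proceed by induction on $|y|$. If $|y|=0$, then (a) and bilinearity determine $(x,y)$. For $|y|\geq 1$, write $y=\theta_{i_1}\theta_{i_2}\cdots\theta_{i_n}$ as a product of generators; applying (c) iteratively reduces $(x,y)$ to a pairing between iterated coproducts $r^{(n-1)}(x)$ and $\theta_{i_1}\otimes\theta_{i_2}\otimes\cdots\otimes\theta_{i_n}$, evaluated via the tensor-product form in the statement. Each tensor factor is a monomial in the $\theta_i$'s and the pairing $(\theta_i,\theta_j)=\delta_{ij}/(1-v_i^{-2})$ from (b) finishes the computation. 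Thus the form, if it exists, is unique.

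For existence, I would turn this reduction into a definition: set $(x,y)$ on each bihomogeneous piece ${}'\!\mathfrak{f}_\nu\otimes{}'\!\mathfrak{f}_\nu$ (forms between different weights vanish for degree reasons) by iterating the above recipe, starting from any expression $y=\theta_{i_1}\cdots\theta_{i_n}$. I would then verify the following in order: (i) this value is independent of how $y$ is written, which is automatic once (c) is established for this candidate; (ii) the form satisfies (c) and (d), equivalently the "left" and "right" reductions agree; and (iii) the form is symmetric, so that (c) and (d) are in fact equivalent once one of them is known. The natural induction is on $|x|+|y|$, using that $r$ is an algebra homomorphism for the twisted multiplication on ${}'\!\mathfrak{f}\otimes{}'\!\mathfrak{f}$ together with the definition of the bilinear form on ${}'\!\mathfrak{f}\otimes{}'\!\mathfrak{f}$.

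The main obstacle, and the only place where the argument differs meaningfully from \cite[Chapter 1]{Lusztigbook}, is tracking the $t$-twist. The multiplication on ${}'\!\mathfrak{f}\otimes{}'\!\mathfrak{f}$ carries the factor $v^{|y_1|\cdot|x_2|}t^{\langle|y_1|,|x_2|\rangle-\langle|x_2|,|y_1|\rangle}$, while the bilinear form on ${}'\!\mathfrak{f}\otimes{}'\!\mathfrak{f}$ carries the factor $t^{2[|x_1|,|x_2|]}$. When one checks symmetry and the compatibility $(x'x'',y)=(x'\otimes x'',r(y))$, these $t$-scalars must cancel exactly. Concretely, the check comes down to the identity
\begin{equation*}
2[|x_1|,|x_2|]+\langle|y_1|,|x_2|\rangle-\langle|x_2|,|y_1|\rangle = 2[|x_1|+|y_1|,|x_2|]+\text{symmetric terms},
\end{equation*}
i.e.\ to a routine verification using the definitions $\langle i,j\rangle=\Omega_{ij}$ and $[i,j]=2\delta_{ij}\Omega_{ii}-\Omega_{ij}$ and bilinearity of both forms. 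Once this bookkeeping identity is in place, the induction closes and symmetry together with (c) and (d) follow, so the form exists and is unique.
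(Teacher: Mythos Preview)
The paper does not give its own proof of this proposition; it is quoted verbatim as \cite[Proposition 13]{FL} and used as a black box. Your Lusztig-style argument is the expected one for this type of result and is presumably what \cite{FL} does, so there is nothing to compare against here beyond noting that your sketch is the standard approach adapted to the two-parameter setting.
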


\subsubsection{}
Let $\sigma:{}'\!\mathfrak{f}\rightarrow{}'\!\mathfrak{f}^{op}$ be a twisted  anti-involution such that
\begin{equation}\label{eq:sigma-f}
\sigma(\theta_i)=\theta_i,\quad {\rm and}\quad \sigma(xy)=t^{\langle|y|,|x|\rangle-\langle|x|,|y|\rangle}\sigma(y)\sigma(x)
 \end{equation}
for any homogenous elements $x, y \in {}'\!\mathfrak{f}$.

Let $\rho: {}'\!\mathfrak{f}\otimes{}'\!\mathfrak{f}\rightarrow{}'\!\mathfrak{f}\otimes{}'\!\mathfrak{f}$
be a linear map defined by
$$\rho(x\otimes y)= t^{\langle|y|,|x|\rangle-\langle|x|,|y|\rangle}y\otimes x,\quad \forall x,y \in {}'\!\mathfrak{f}.$$
We set ${}^tr=\rho \circ r$.

\begin{lem}\label{lem:sigma}
We have $r(\sigma(x)) = (\sigma \otimes \sigma)\ {}^t r(x)$, for all $x\in {}'\!\mathfrak{f}$.
\end{lem}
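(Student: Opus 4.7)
The plan is to proceed by induction on the total degree $\mathrm{tr}(|x|)$ after reducing by linearity to homogeneous $x$. For the base case, $x = \theta_i$ gives $r(\sigma(\theta_i)) = r(\theta_i) = \theta_i \otimes 1 + 1 \otimes \theta_i$, while the twist in $\rho$ is trivial on tensor factors of degree $0$, so ${}^tr(\theta_i) = 1 \otimes \theta_i + \theta_i \otimes 1$, which is fixed by $\sigma \otimes \sigma$; scalars are handled by $r(c) = c \otimes 1$.

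It is convenient throughout to set $\beta(a,b) = \langle b,a\rangle - \langle a,b \rangle$, a bilinear and skew-symmetric form, in which the three twists read
\[(u_1 \otimes u_2)(w_1 \otimes w_2) = v^{|w_1|\cdot|u_2|}\, t^{\beta(|u_2|,|w_1|)}\, u_1 w_1 \otimes u_2 w_2,\]
\[\sigma(uw) = t^{\beta(|u|,|w|)}\sigma(w)\sigma(u),\qquad \rho(u \otimes w) = t^{\beta(|u|,|w|)}\, w \otimes u.\]
For the inductive step, I assume the identity for homogeneous $x,y$ and prove it for $xy$. Write $r(x) = \sum x_1 \otimes x_2$ and $r(y) = \sum y_1 \otimes y_2$ with homogeneous summands. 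Using that $r$ is an algebra morphism together with the inductive hypothesis applied separately to $\sigma(x)$ and $\sigma(y)$, and carrying out the twisted product in ${}'\mathfrak{f} \otimes {}'\mathfrak{f}$, the left-hand side becomes
\[r(\sigma(xy)) = t^{\beta(|x|,|y|)}\, r(\sigma(y))\, r(\sigma(x)) = \sum v^{|x_2|\cdot|y_1|}\, t^{E_L}\, \sigma(y_2)\sigma(x_2) \otimes \sigma(y_1)\sigma(x_1).\]
Expanding the right-hand side directly from $r(xy) = r(x)r(y)$, applying $\rho$ and then $\sigma \otimes \sigma$, and invoking the twisted anti-involution rule to rewrite each $\sigma(x_i y_i)$ as $t^{\beta(|x_i|,|y_i|)} \sigma(y_i)\sigma(x_i)$, produces a sum over the same terms with the same $v$-factor and an exponent $t^{E_R}$.

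The only real content is then the bookkeeping identity $E_L = E_R$. Expanding $\beta(|x|,|y|)$ and $\beta(|x_1|+|y_1|,|x_2|+|y_2|)$ by bilinearity into their four summands, and using the skew-symmetry $\beta(|x_2|,|y_1|) + \beta(|y_1|,|x_2|) = 0$ (this cancellation occurs on each side), both exponents reduce to
\[\beta(|x_1|,|x_2|) + \beta(|x_1|,|y_1|) + \beta(|x_1|,|y_2|) + \beta(|x_2|,|y_2|) + \beta(|y_1|,|y_2|).\]
Thus the two sides agree term by term and the induction closes. The main (in fact only) obstacle is keeping track of the three different twists coming from $r$, $\sigma$, and $\rho$; collecting them under the single notation $\beta$ streamlines this into one routine calculation.
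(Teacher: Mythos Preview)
Your proof is correct and follows essentially the same route as the paper: induction on the degree, writing a general element as a product $xy$ of two pieces already covered by the hypothesis, expanding both sides in terms of the Sweedler components of $r(x)$ and $r(y)$, and then matching the $v$- and $t$-exponents term by term. The only difference is cosmetic: your shorthand $\beta(a,b)=\langle b,a\rangle-\langle a,b\rangle$ packages the three twists uniformly and makes the cancellation $\beta(|x_2|,|y_1|)+\beta(|y_1|,|x_2|)=0$ transparent, whereas the paper carries the raw $\langle\,,\,\rangle$ exponents throughout and compares \eqref{eq:sigmatr} and \eqref{eq:rsigma} directly.
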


\begin{proof}
We show this lemma by induction on $|x|$. If $x=\theta_i$,
it follows the definition of $\sigma$ and ${}^t r$.

Assume that it holds for any homogenous elements $x'$ and $x''$.
We shall show that it holds for $x=x'x''$.
Let's write $r(x')=\sum x'_1\otimes x'_2$ and $r(x'')=\sum x''_1\otimes x''_2$,
with all factors being homogeneous. Then we have
\begin{equation*}
r(x'x'')=r(x')r(x'')=\sum v^{|x''_1||x'_2|}
t^{\langle|x''_1|,|x'_2|\rangle-\langle|x'_2|,|x''_1|\rangle}
x'_1x''_1\otimes x'_2x''_2.
\end{equation*}
By the definition of ${}^t r$, $(\sigma \otimes \sigma) {}^t r(x'x'')$ is equal to
\begin{equation}\label{eq:sigmatr}
\begin{split}
\sum &  v^{|x''_1||x'_2|}
t^{\langle|x''_1|,|x'_2|\rangle-\langle|x'_2|,|x''_1|\rangle}\\
&t^{\langle|x'_2|+|x''_2|,|x'_1|+|x''_1|\rangle-\langle|x'_1|+|x''_1|,|x'_2|+|x''_2|\rangle}
\sigma(x'_2x''_2)\otimes \sigma(x'_1x''_1)\\
=\sum & v^{|x''_1||x'_2|}
t^{\langle|x''_1|,|x'_2|\rangle-\langle|x'_2|,|x''_1|\rangle
+ \langle|x'_2|+|x''_2|,|x'_1|+|x''_1|\rangle-\langle|x'_1|+|x''_1|,|x'_2|+|x''_2|\rangle}\\
&t^{\langle|x''_2|,|x'_2|\rangle-\langle|x'_2|,|x''_2|\rangle
+\langle|x''_1|,|x'_1|\rangle-\langle|x'_1|,|x''_1|\rangle}
\sigma(x''_2)\sigma(x'_2)\otimes \sigma(x''_1)\sigma(x'_1).
\end{split}
\end{equation}
By hypothesis, we have
$r(\sigma(x'))=\sum t^{ \langle|x'_2|,|x'_1| \rangle -
\langle |x'_1|,|x'_2| \rangle} \sigma(x'_2) \otimes \sigma(x'_1)$
and $r(\sigma(x''))=\sum t^{ \langle |x''_2|,|x''_1| \rangle-
\langle |x''_1|,|x''_2| \rangle} \sigma(x''_2) \otimes \sigma(x''_1)$.
Thus,
\begin{equation}\label{eq:rsigma}
\begin{split}
r(\sigma(x'x''))=&t^{\langle|x''|,|x'|\rangle-\langle|x'|,|x''|\rangle}r(\sigma(x'')\sigma(x'))\\
=&t^{\langle|x''|,|x'|\rangle-\langle|x'|,|x''|\rangle}
\sum t^{\langle|x'_2|,|x'_1|\rangle-\langle|x'_1|,|x'_2|\rangle
+\langle|x''_2|,|x''_1|\rangle-\langle|x''_1|,|x''_2|\rangle}\\
&v^{|x''_1||x'_2|}t^{\langle|x'_2|,|x''_1|\rangle-\langle|x''_1|,|x'_2|\rangle}
\sigma(x''_2) \sigma(x'_2) \otimes \sigma(x''_1) \sigma(x'_1).
\end{split}
\end{equation}
By comparing the exponents of $v$ and $t$ in \eqref{eq:sigmatr}
and \eqref{eq:rsigma} with $|x'|=|x'_1|+|x'_2|$ and $|x''|=|x''_1|+|x''_2|$, we have
\begin{equation*}
r(\sigma(x'x''))= (\sigma \otimes \sigma) {}^t r(x'x'').
\end{equation*}

This finishes the proof.
\end{proof}

\subsubsection{}
Let $\bar{\cdot}: \mathbb{Q}(v,t) \rightarrow \mathbb{Q}(v,t)$
be the unique $\mathbb{Q}$-algebra involution such that
\begin{equation*}\label{eq73}
\overline{v}=v^{-1}\ \rm{and} \ \overline{t}=t.
\end{equation*}

Let $\bar{\cdot}: {}'\! \mathfrak{f}\rightarrow {}'\! \mathfrak{f}$
be the unique $\mathbb{Q}$-algebra involution such that
$$\overline{p \theta_i}=\overline{p} \theta_i, \quad \forall p\in \mathbb{Q}(v,t),\ i \in I.$$
It's clear that $|\overline{x}|=|x|$ for any homogeneous element
$x \in {}'\! \mathfrak{f}$.

Let $\mathfrak{'f} \overline{\otimes} \mathfrak{'f} $ be the
$\mathbb{Q}(v,t)$-vector space  $\mathfrak{'f} \otimes\mathfrak{'f} $
with the associative $\mathbb{Q}(v,t)$-algebra structure given by
$$
(x_1\otimes x_2)(y_1\otimes y_2)=v^{-|y_1||x_2|}t^{\langle|y_1|,|x_2|\rangle-
\langle|x_2|,|y_1|\rangle}x_1y_1\otimes x_2y_2.
$$
Then $\bar{\otimes}:\mathfrak{'f} \otimes\mathfrak{'f} \rightarrow
\mathfrak{'f} \overline{\otimes} \mathfrak{'f} $ is the $\mathbb{Q}(t)$-algebra isomorphism.

Let $\bar{r}:\mathfrak{'f} \rightarrow \mathfrak{'f} \overline{\otimes}\mathfrak{'f} $
be the $\mathbb{Q}(t)$-algebra homorphism defined by
$$\bar{r}(x)=\overline{r(\bar{x})},\quad \forall x\in \mathfrak{'f}.$$
Then we have $\bar{r}(\theta_i)=\theta_i\otimes 1+1\otimes \theta_i$.

\begin{lem} \label{lem:rbar}
For any $x\in \mathfrak{'f}$, by setting $r(x) = \sum x_1\otimes x_2$, we have
$$\bar{r}(x)
=\sum v^{-|x_1|\cdot |x_2|} t^{\langle|x_2|,|x_1|\rangle-\langle|x_1|, |x_2|\rangle} x_2\otimes x_1.$$
\end{lem}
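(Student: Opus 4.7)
The plan is to show that both $\bar r$ and the map $\tau\colon{}'\mathfrak{f}\to{}'\mathfrak{f}\,\overline{\otimes}\,{}'\mathfrak{f}$ defined by the right-hand side of the claimed identity are $\mathbb{Q}(t)$-algebra homomorphisms which agree on the generators $\theta_i$. Since ${}'\mathfrak{f}$ is generated as an algebra by the $\theta_i$, this will force $\tau=\bar r$ and hence prove the lemma.

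The check on generators is immediate from $r(\theta_i)=\theta_i\otimes 1+1\otimes\theta_i$: in each summand one tensor factor has degree $0$, so the $v$- and $t$-prefactors in $\tau(\theta_i)$ collapse to $1$, yielding $\tau(\theta_i)=\theta_i\otimes 1+1\otimes\theta_i=\bar r(\theta_i)$. That $\bar r$ is multiplicative with respect to $\overline{\otimes}$ follows from its definition $\bar r(x)=\overline{r(\bar x)}$ together with the fact, recorded just before the lemma, that the componentwise bar is an algebra isomorphism from $({}'\mathfrak{f}\otimes{}'\mathfrak{f},\otimes)$ onto $({}'\mathfrak{f}\,\overline{\otimes}\,{}'\mathfrak{f},\overline{\otimes})$.

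The crux is verifying multiplicativity of $\tau$. For homogeneous $y,z\in{}'\mathfrak{f}$ with $r(y)=\sum y_1\otimes y_2$ and $r(z)=\sum z_1\otimes z_2$, I would expand $r(yz)=r(y)r(z)$ using the $\otimes$-product, apply $\tau$, and compare with $\tau(y)\tau(z)$ computed in the $\overline{\otimes}$-product via $(y_2\otimes y_1)(z_2\otimes z_1)=v^{-|z_2|\cdot|y_1|}t^{\langle|z_2|,|y_1|\rangle-\langle|y_1|,|z_2|\rangle}y_2z_2\otimes y_1z_1$. Writing $|y_1z_1|=|y_1|+|z_1|$ and $|y_2z_2|=|y_2|+|z_2|$, the resulting $v$-exponent on the $\tau(yz)$-side, $|z_1|\cdot|y_2|-(|y_1|+|z_1|)\cdot(|y_2|+|z_2|)$, matches that on the $\tau(y)\tau(z)$-side, $-|y_1|\cdot|y_2|-|z_1|\cdot|z_2|-|z_2|\cdot|y_1|$, by the symmetry of $\cdot$; the $t$-exponents match by a parallel expansion using bilinearity of $\langle,\rangle$, both reducing to $\langle|y_2|,|y_1|\rangle-\langle|y_1|,|y_2|\rangle+\langle|z_2|,|z_1|\rangle-\langle|z_1|,|z_2|\rangle+\langle|z_2|,|y_1|\rangle-\langle|y_1|,|z_2|\rangle$.

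The only nontrivial part is tracking exponents consistently when passing between the $\otimes$ and $\overline{\otimes}$ products; the bookkeeping is in the same spirit as in the proof of Lemma \ref{lem:sigma} above and presents no obstacle beyond expansion and cancellation using the bilinearity and symmetry of the forms on $\mathbb{Z}[I]$.
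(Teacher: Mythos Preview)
Your argument is correct and is essentially the paper's proof spelled out: the paper reduces to an equivalent formula for $r(\bar x)$ and then invokes \cite[Lemma~5]{FL} with $v\mapsto v^{-1}$, whose proof (by induction on $|x|$) amounts to exactly your multiplicativity-plus-generators check. One cosmetic remark: since ${}'\mathfrak{f}$ is freely generated over $\mathbb{Q}(v,t)$ by the $\theta_i$, you need $\bar r$ and $\tau$ to be $\mathbb{Q}(v,t)$-linear (not merely $\mathbb{Q}(t)$-linear) to conclude from agreement on generators---but both are, immediately from the definitions, so this is only a phrasing issue.
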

\begin{proof}
By the definition of $\bar{r}$, we shall show that
\begin{equation*}
r(\bar{x})
=\sum v^{|x_1|\cdot |x_2|} t^{\langle|x_2|,|x_1|\rangle-\langle|x_1|, |x_2|\rangle}
\bar{x}_2\otimes \bar{x}_1.
\end{equation*}
The proof for Lemma 5 in \cite{FL} works through if we replace $v$ by $v^{-1}$.
\end{proof}

We note that the coassociativity property of $r$ implies the coassociativity property of $\bar{r}$, i.e.,
$(\bar r\otimes 1)\bar r = (1\otimes \bar r)\bar r$.

\subsubsection{The maps $r_i$ and ${}_ir$}
For any $i\in I$, let $r_i ({\rm resp.}\ {}_ir): {}'\! \mathfrak{f}\rightarrow {}'\! \mathfrak{f}$
be the unique linear map satisfying the following properties.
\begin{equation*}
  \begin{split}
    r_i(1)=0,\ r_i(\theta_j)=\delta_{ij},\ \forall j\in I\ {\rm and}\
    r_i(xy)=v^{i\cdot |y|}t^{\langle |y|,i\rangle-\langle i, |y|\rangle}r_i(x)y+xr_i(y);\\
    {}_ir(1)=0,\ {}_ir(\theta_j)=\delta_{ij},\ \forall j\in I\ {\rm and}\
    {}_ir(xy)={}_ir(x)y+v^{i\cdot |x|}t^{\langle i,|x|\rangle-\langle |x|,i\rangle}x{}_ir(y).
  \end{split}
\end{equation*}
By an induction on $|x|$, we can show that $r(x)=r_i(x)\otimes \theta_i$ (resp. $r(x)= \theta_i\otimes{}_ir(x)$)
plus other terms.

\subsubsection{Quantum serre relations}

Let $\mathfrak{J}$ be the radical of the bilinear form $(-,-)$.
It is clear that $\mathfrak{J}$ is a two-sided ideal of ${}'\!\mathfrak{f}$.
Denote the quotient algebra of ${}'\!\mathfrak{f}$ by
\[\mathfrak{f}={}'\!\mathfrak{f}/\mathfrak{J}.\]

Recall the quantum integers from \cite{FL}.
For any $n\in \mathbb{N}$, we have
$$[n]_{v, t}=\frac{(vt)^n-(vt^{-1})^{-n}}{vt-(vt^{-1})^{-1}},\qquad
[n]^!_{v, t}=\prod_{k=1}^n[k]_{v,t}.$$
Denote by
 $$\theta_i^{(n)}=\frac{\theta_i^n}{[n]^!_{v_i, t_i}}.$$

\begin{prop}\label{prop15}~\cite[Proposition 14]{FL}
  The generators $\theta_i$ of $\mathfrak{f}$ satisfy the following identities.
  $$
  \sum_{p+p'=1-2\frac{i\cdot j}{i\cdot i}}
  (-1)^pt_i^{-p(p'-2\frac{\langle i,j\rangle}{i\cdot i}+
  2\frac{\langle j,i\rangle}{i\cdot i})}\theta_i^{(p)}\theta_j\theta_i^{(p')}=0,
  \quad \forall i\not =j\in I.
  $$
\end{prop}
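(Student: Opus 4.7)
My strategy is Lusztig-style: I will show that the left-hand side, regarded as an element $\xi_{ij}\in{}'\!\mathfrak{f}$, lies in the radical $\mathfrak{J}$ of the bilinear form of Proposition~\ref{prop:bilinearform}. Since $\mathfrak{f}={}'\!\mathfrak{f}/\mathfrak{J}$, this implies $\xi_{ij}=0$ in $\mathfrak{f}$, which is the claim. Put $N=1-2(i\cdot j)/(i\cdot i)\in\N_{>0}$ and, for $p+p'=N$, set $\var(p)=-p(p'-2\langle i,j\rangle/(i\cdot i)+2\langle j,i\rangle/(i\cdot i))$, so
\begin{equation*}
\xi_{ij}=\sum_{p=0}^{N}(-1)^p t_i^{\var(p)}\,\theta_i^{(p)}\theta_j\theta_i^{(p')}.
\end{equation*}

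The first step is to reduce membership in $\mathfrak{J}$ to a derivation computation. From the pairing rules (c),(d) of Proposition~\ref{prop:bilinearform} and the fact that $r(x)=\sum_k r_k(x)\otimes\theta_k$ modulo terms whose right tensor factor has degree $\geq 2$, one deduces the identity $(x,\theta_k y)=c(|x|,k)\,(r_k(x),y)$ for an explicit nonzero monomial $c(|x|,k)$ in $v,t$. A standard induction on degree then gives the characterization: $x\in\mathfrak{J}$ if and only if $r_k(x)\in\mathfrak{J}$ for every $k\in I$. It therefore suffices to prove $r_k(\xi_{ij})=0$ for each $k\in I$.

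For $k\notin\{i,j\}$, this is immediate since $r_k$ annihilates both $\theta_i$ and $\theta_j$. For $k=j$, iterating the Leibniz rule on $\theta_i^{(p)}\theta_j\theta_i^{(p')}$ extracts only the central $\theta_j$ (because $r_j(\theta_i)=0$), producing an explicit scalar $v^{p'(i\cdot j)}t^{p'(\langle i,j\rangle-\langle j,i\rangle)}$ from passing through the right-hand divided power. Using $[n]_{v_i,t_i}=t_i^{n-1}[n]_{v_i}$, one gets $\theta_i^{(p)}\theta_i^{(p')}=t_i^{pp'}\bbinom{N}{p}_{v_i}\theta_i^{(N)}$, so the vanishing $r_j(\xi_{ij})=0$ reduces to a single identity of the form $\sum_{p=0}^{N}(-1)^p v_i^{g(p)}\bbinom{N}{p}_{v_i}=0$, the $t$-powers having been engineered to cancel out thanks to the precise shape of $\var(p)$. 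This is the classical $v_i$-binomial vanishing.

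The case $k=i$ is the main obstacle. A short induction using the Leibniz rule gives $r_i(\theta_i^{(n)})=(v_i/t_i)^{n-1}\theta_i^{(n-1)}$. Applying $r_i$ to each term of $\xi_{ij}$ and using $r_i(\theta_j)=0$, one obtains
\begin{equation*}
r_i\bigl(\theta_i^{(p)}\theta_j\theta_i^{(p')}\bigr)=A_p\,\theta_i^{(p-1)}\theta_j\theta_i^{(p')}+B_p\,\theta_i^{(p)}\theta_j\theta_i^{(p'-1)},
\end{equation*}
with $A_p,B_p$ explicit monomials involving $(v_i/t_i)^{p-1}$ (resp.\ $(v_i/t_i)^{p'-1}$) together with the scalars $v^{i\cdot(j+ip')}t^{\langle j+ip',i\rangle-\langle i,j+ip'\rangle}$ produced by pushing $r_i$ past $\theta_j$ and $\theta_i^{(p')}$. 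Reindexing by $(m,m')$ with $m+m'=N-1$, so each monomial $\theta_i^{(m)}\theta_j\theta_i^{(m')}$ receives a contribution from $(p,p')=(m+1,m')$ and from $(p,p')=(m,m'+1)$, the vanishing $r_i(\xi_{ij})=0$ reduces to showing that these two contributions are negatives of each other for every $m$. The definition of $\var(p)$ is calibrated precisely so that, after combining the $t_i$-factors coming from the divided-power multiplication with the $t$-factors produced when $r_i$ crosses the central $\theta_j$, the two contributions cancel; this is the direct two-parameter generalisation of the one-parameter argument of \cite[Chapter 1]{Lusztigbook}, where the role of the $t$-exponent is precisely to absorb the asymmetry $\langle i,j\rangle\neq\langle j,i\rangle$. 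Having established $r_k(\xi_{ij})=0$ for all $k$, we conclude $\xi_{ij}\in\mathfrak{J}$, so the stated Serre identity holds in $\mathfrak{f}$.
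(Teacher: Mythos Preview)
The paper does not give its own proof of this proposition: it is stated with a citation to \cite[Proposition~14]{FL} and no argument is provided in the present paper. So there is no ``paper's proof'' to compare against here.

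Your proposal follows the standard Lusztig strategy (show $\xi_{ij}\in\mathfrak J$ by proving $r_k(\xi_{ij})=0$ for all $k$), and this is almost certainly how \cite{FL} proceeds as well. The computations you sketch are correct: the identity $[n]_{v_i,t_i}=t_i^{\,n-1}[n]_{v_i}$ indeed gives $\theta_i^{(p)}\theta_i^{(p')}=t_i^{pp'}\bbinom{N}{p}_{v_i}\theta_i^{(N)}$ and $r_i(\theta_i^{(n)})=(v_i/t_i)^{n-1}\theta_i^{(n-1)}$; in the $k=j$ case the $t_i$-exponent $\var(p)+pp'+2p'(\langle i,j\rangle-\langle j,i\rangle)/(i\cdot i)$ really is independent of $p$, reducing the claim to the classical $v_i$-binomial vanishing; and in the $k=i$ case the two contributions to each monomial $\theta_i^{(m)}\theta_j\theta_i^{(m')}$ cancel as you describe. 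One small point: for the reduction ``$x\in\mathfrak J\Leftrightarrow r_k(x)\in\mathfrak J$ for all $k$'' you should also observe (or use) that the weight spaces ${}'\!\mathfrak f_\nu$ are pairwise orthogonal under $(\,,\,)$, so that a homogeneous element is in $\mathfrak J$ as soon as it is orthogonal to its own weight space.
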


\subsection{The presentation of the two-parameter quantum algebra $U_{v,t}$}\label{sec:presentation}
By Drinfeld double construction, we get the following presentation of
the entire two-parameter quantum algebra $U_{v,t}$,
generated by symbols $E_i, F_i, K_i^{\pm 1}, K_i'^{\pm 1}, \forall i\in I$,
and subjects to the following relations.
\allowdisplaybreaks
\begin{eqnarray*}
  (R1)& &K_i^{\pm 1}K_i^{\mp 1}=K'^{\pm 1}_iK'^{\mp 1}_i=1.\\
  (R2)& &K_iE_jK^{-1}_i=v^{i\cdot j}t^{\langle j,i\rangle-\langle i,j\rangle} E_j,\ \ K'_iE_jK'^{-1}_i=v^{-i\cdot j}t^{\langle j,i\rangle-\langle i,j\rangle}E_j,\\
      & &K'_iF_jK'^{-1}_i=v^{i\cdot j}t^{\langle i,j\rangle-\langle j,i\rangle} F_j,\ \ K_iF_jK^{-1}_i=v^{-i\cdot j}t^{\langle i,j\rangle-\langle j,i \rangle}F_j.\\
  (R3)& &E_iF_j-F_j E_i=\delta_{ij}\frac{K_i-K'_i}{v_i-v^{-1}_i}.\\
  (R4)& &\sum_{p+p'=1-2\frac{i\cdot j}{i\cdot i}}(-1)^pt_i^{-p(p'-2\frac{\langle i,j\rangle}{i\cdot i}+2\frac{\langle j,i\rangle}{i\cdot i})}
         E_i^{(p)}E_j E_i^{(p')}=0, \quad {\rm if}\ i\not =j,\\
  & &\sum_{p+p'=1-2\frac{i\cdot j}{i\cdot i}}(-1)^pt_i^{-p(p'-2\frac{\langle i,j\rangle}{i\cdot i}+2\frac{\langle j,i\rangle}{i\cdot i})}
         F_i^{(p')}F_j F_i^{(p)}=0, \quad {\rm if}\ i\not =j.\\
\end{eqnarray*}

 The algebra $U_{v,t}$ has a Hopf algebra structure with the comultiplication $\Delta$, the counit $\varepsilon$ and the antipode $S$ given as follows.
  $$\begin{array}{llll}
   &\Delta(K_i^{\pm 1})=K_i^{\pm 1} \otimes K_i^{\pm 1},&\Delta(K'^{\pm 1}_i)=K'^{\pm 1}_i \otimes K'^{\pm 1}_i, &\vspace{4pt}\\
   &\Delta(E_i)=E_i\otimes 1+K_i\otimes E_i,& \Delta(F_i)=1 \otimes F_i+F_i\otimes K'_i, & \vspace{4pt}\\
  &\varepsilon(K_i^{\pm 1})=\varepsilon(K'^{\pm 1}_i)=1,& \varepsilon(E_i)=\varepsilon(F_i)=0,& S(K_i^{\pm 1})=K_i^{\mp 1},\vspace{4pt}\\
  & S(K'^{\pm 1}_i)=K'^{\mp 1}_i,&
  S(E_i)=-K_i^{-1}E_i,&S(F_i)=-F_iK'^{-1}_i.
  \end{array}$$

Let $U_{v,t}^+$(resp. $U_{v,t}^-$) be a subalgebra of $U_{v,t}$ generated by $E_i$(resp. $F_i$).
From the drinfeld double construction, we know that there are two well-defined algebra homomorphisms
$\iota^{+}:\mathfrak{f}\rightarrow U_{v,t}^+\ (x\mapsto x^+)$ and $\iota^{-}:\mathfrak{f}\rightarrow U_{v,t}^-\ (x\mapsto x^-, t\mapsto t^{-1})$
such that $E_i=\theta_i^+, F_i=\theta_i^-$ for all $i\in I$.

Let $\sigma^+: \mathbf{U}^+ \rightarrow \mathbf{U}^+$(resp. $\sigma^-: \mathbf{U}^- \rightarrow \mathbf{U}^-$)
be an anti-involution such that
$\sigma^+(x^+)=\sigma(x)^+$(resp. $\sigma^-(x^-)=\sigma(x)^-$) for all $x\in \mathfrak{f}$.

\begin{lem}\label{lem:S}
For all $x\in \mathfrak{f}$, we have\\
$(\mathrm{i})\quad S(x^+)=(-1)^{{\rm tr} |x|}v^{\frac{|x|\cdot |x|}{2}}v^{-1}_{|x|}K^{-1}_{|x|}\sigma(x)^+$,\\
$(\mathrm{ii})\quad S(x^-)=(-1)^{{\rm tr} |x|}v^{\frac{-|x|\cdot |x|}{2}}v_{|x|}\sigma(x)^-K'^{-1}_{|x|}$.
\end{lem}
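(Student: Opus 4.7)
The plan is to prove (i) and (ii) in parallel by induction on the height $\operatorname{tr}|x|$; by linearity it suffices to treat monomials in the $\theta_i$'s. In the base case $x=\theta_i$ (height $1$), the right-hand side of (i) collapses to $-v_i v_i^{-1} K_i^{-1} E_i = -K_i^{-1}E_i = S(E_i)$, and that of (ii) to $-v_i^{-1} v_i F_i K'^{-1}_i = -F_i K'^{-1}_i = S(F_i)$, matching the defining values of the antipode from Section~\ref{sec:presentation}.

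For the inductive step, I factor $x = x' x''$ with $|x'|, |x''|$ of strictly smaller height, and use the anti-multiplicativity $S((x' x'')^+) = S((x'')^+) S((x')^+)$ together with the inductive hypothesis for (i). The product collects to
$$
(-1)^{\operatorname{tr}|x|}\, v^{(|x'|\cdot|x'|+|x''|\cdot|x''|)/2}\, v_{|x|}^{-1}\, K_{|x''|}^{-1}\sigma(x'')^+ K_{|x'|}^{-1}\sigma(x')^+.
$$
The key manipulation is to slide $K_{|x'|}^{-1}$ past $\sigma(x'')^+$ using the commutation rule derived from (R2), namely $K_\mu y^+ K_\mu^{-1} = v^{\mu\cdot|y|} t^{\langle|y|,\mu\rangle-\langle\mu,|y|\rangle} y^+$ for any homogeneous $y\in\mathfrak{f}$; this produces a scalar $v^{|x'|\cdot|x''|}\, t^{\langle|x''|,|x'|\rangle-\langle|x'|,|x''|\rangle}$. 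The $t$-factor is exactly the one needed to recombine $\sigma(x'')^+ \sigma(x')^+$ into $\sigma(x'x'')^+ = \sigma(x)^+$ via the twist formula \eqref{eq:sigma-f}. The $v$-exponents then add as $(|x'|\cdot|x'|+|x''|\cdot|x''|)/2 + |x'|\cdot|x''| = |x|\cdot|x|/2$, while $v_{|x|} = v_{|x'|} v_{|x''|}$ and $K_{|x|} = K_{|x'|} K_{|x''|}$ combine the remaining factors, yielding (i). Part (ii) is entirely parallel, using the commutation rule $K'_\mu y^- K'^{-1}_\mu = v^{\mu\cdot|y|} t^{\langle\mu,|y|\rangle-\langle|y|,\mu\rangle} y^-$; two sign flips appear (the $v$-exponent becomes $-|x|\cdot|x|/2$ and $K'^{-1}_{|x|}$ lands on the right of $\sigma(x)^-$), which encode both $S(F_i)=-F_iK'^{-1}_i$ and the fact that $\iota^-$ sends $t\mapsto t^{-1}$.

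The main, and essentially only, obstacle is the bookkeeping of the $t$-exponents. The cancellation is not coincidental: the $t$-twist acquired when commuting a $K$-factor past a positive root vector is, by design of the two-parameter structure of \cite{FL}, precisely the twist carried by $\sigma$ on $\mathfrak{f}$ (respectively, its $\iota^-$-image in (ii)). Once this matching is recorded, the induction closes with no residual $t$-power on either side of the formulas.
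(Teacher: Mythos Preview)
Your proof is correct and follows essentially the same route as the paper's: induction on $\operatorname{tr}|x|$, the base case $x=\theta_i$, and in the inductive step writing $S((x'x'')^+)=S((x'')^+)S((x')^+)$, commuting $K_{|x'|}^{-1}$ past $\sigma(x'')^+$ via (R2), and absorbing the resulting $t$-factor with the twist formula \eqref{eq:sigma-f} for $\sigma$. The paper likewise proves only (i) in detail and leaves (ii) to the reader.
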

\begin{proof}
The proofs of $(\mathrm{i})$ and $(\mathrm{ii})$ are similar.
We shall only show $(\mathrm{i})$ and left $(\mathrm{ii})$ to readers.
If $x=\theta_i$, $(\mathrm{i})$ is straightforward by the definition of $S(E_i)$.

Assume that $(\mathrm{i})$ holds for $x_1$ and $x_2$.
We shall show that it holds for $x=x_1x_2$. Let's write
$S(x^+_1)=(-1)^{{\rm tr} |x_1|}v^{\frac{|x_1|\cdot |x_1|}{2}}v^{-1}_{|x_1|}K^{-1}_{|x_1|}\sigma(x_1)^+$
and $S(x^+_2)=(-1)^{{\rm tr} |x_2|}v^{\frac{|x_2|\cdot |x_2|}{2}}v^{-1}_{|x_2|}K^{-1}_{|x_2|}\sigma(x_2)^+$.
Then, we have
\begin{align*}
&S((x_1x_2)^+)=S(x_2^+)S(x_1^+)\\
&=(-1)^{{\rm tr} (|x_1|+|x_2|)}v^{-1}_{|x_1|+|x_2|}v^{\frac{|x_1|\cdot |x_1|+ |x_2|\cdot |x_2| }{2}}
K^{-1}_{|x_2|}\sigma(x_2)^+K^{-1}_{|x_1|}\sigma(x_1)^+\\
&=(-1)^{{\rm tr} (|x_1|+|x_2|)}v^{-1}_{|x_1|+|x_2|}v^{\frac{(|x_1|+ |x_2|)\cdot (|x_1|+|x_2|)}{2}}
t^{\langle|x_2|,|x_1|\rangle-\langle|x_1|,|x_2|\rangle}K^{-1}_{|x_1|+|x_2|}\sigma(x_2)^+\sigma(x_1)^+\\
&=(-1)^{{\rm tr} (|x_1|+|x_2|)}v^{-1}_{|x_1|+|x_2|}v^{\frac{(|x_1|+ |x_2|)\cdot (|x_1|+|x_2|)}{2}}
K^{-1}_{|x_1|+|x_2|}\sigma(x_1x_2)^+.
\end{align*}
This finishes the proof.
\end{proof}

\begin{lem}\label{lem:deltar+}
For all  $x, x', x'' \in \mathfrak{f}$, let $r^+_i(x^+)=\iota^+ \circ r_i(x)$ and ${}_ir^+(x^+)=\iota^+ \circ {}_ir(x)$.
Then we have\\
$(\mathrm{i})\ \ \  r^+_i((x'x'')^+)=v^{i\cdot |x''|}t^{\langle |x''|,i\rangle-\langle i,|x''|\rangle}r^+_i(x^{'+})x^{''+}+x^{'+}r^+_i(x^{''+})$,\\
$(\mathrm{ii})\ \ {}_ir^+((x'x'')^+)={}_ir^+(x^{'+})x^{''+}+v^{i\cdot |x'|}t^{\langle i,|x'|\rangle-\langle |x'|,i\rangle}x^{'+}{}_ir^+(x^{''+})$,\\
$(\mathrm{iii})\ x^+F_i-F_ix^+=\cfrac{r^+_i(x^+)K_i-K'_i({}_ir^+(x^+))}{v_i-v_i^{-1}}$,\\
$(\mathrm{iv})\ \Delta(x^+)=x^+\otimes 1+\sum_{i}r^+_i(x^+)K_i\otimes E_i+\cdots
 =K_{|x|}\otimes x^++\sum_{i}E_iK_{|x|-i}\otimes{}_ir^+(x^+)+\cdots$.
\end{lem}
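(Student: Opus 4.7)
The plan is to prove the four parts in the order stated. Throughout I rely on $\iota^+$ being a degree-preserving $\mathbb{Q}(v,t)$-algebra homomorphism, on the $K$-$E$ and $K'$-$E$ commutation rules (R2), and on the crossing relation (R3).

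Parts (i) and (ii) are essentially immediate. The derivation rules
$$r_i(x'x'') = v^{i\cdot|x''|}\, t^{\langle|x''|,i\rangle - \langle i,|x''|\rangle}\, r_i(x')\,x'' + x'\, r_i(x''),$$
$${}_ir(x'x'') = {}_ir(x')\,x'' + v^{i\cdot|x'|}\, t^{\langle i,|x'|\rangle - \langle|x'|,i\rangle}\, x'\, {}_ir(x'')$$
are built into the definitions of $r_i$ and ${}_ir$ in Section \ref{sec:freealgebra} and descend to $\mathfrak{f}$. Applying $\iota^+$ to both sides and using $r_i^+ = \iota^+\circ r_i$, ${}_ir^+ = \iota^+\circ {}_ir$, gives (i) and (ii).

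For part (iii), I induct on $|x|$. The base case $x = \theta_j$ is precisely (R3) since $r_i^+(E_j) = \delta_{ij} = {}_ir^+(E_j)$. For the inductive step, set $C(y^+) := y^+F_i - F_iy^+$ and write $x = x'x''$. A direct expansion yields the Leibniz-type identity
$$C\bigl((x'x'')^+\bigr) = (x')^+\, C\bigl((x'')^+\bigr) + C\bigl((x')^+\bigr)\, (x'')^+.$$
By the inductive hypothesis each $C$ on the right equals $\bigl(r_i^+\cdot K_i - K'_i\cdot {}_ir^+\bigr)/(v_i-v_i^{-1})$ on the relevant factor. Substituting and then commuting $K_i$ past $(x'')^+$ and $K'_i$ past $(x')^+$ via (R2) produces exactly the twist scalars needed to match (i) and (ii); after regrouping, the right-hand side becomes $\bigl(r_i^+((x'x'')^+) K_i - K'_i\, {}_ir^+((x'x'')^+)\bigr)/(v_i-v_i^{-1})$.

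Part (iv) is also an induction on $|x|$, where ``$\cdots$'' tracks terms whose second tensor factor has weight of height at least two (respectively of height strictly less than $|x|-i$ for the second form). The base $x = \theta_j$ is $\Delta(E_j) = E_j\otimes 1 + K_j\otimes E_j$. For the inductive step $x = x'x''$, expand $\Delta((x')^+)\Delta((x'')^+)$ using the hypothesis for both factors and collect by the weight of the right tensor factor. The leading pieces combine into $x^+\otimes 1$. The coefficient of $E_i$ on the right receives two cross contributions,
$$\bigl((x')^+\otimes 1\bigr)\cdot\bigl(r_i^+((x'')^+)K_i\otimes E_i\bigr) = (x')^+ r_i^+((x'')^+) K_i \otimes E_i,$$
$$\bigl(r_i^+((x')^+)K_i\otimes E_i\bigr)\cdot\bigl((x'')^+\otimes 1\bigr) = r_i^+((x')^+) K_i (x'')^+ \otimes E_i,$$
and commuting $K_i$ past $(x'')^+$ via (R2) introduces exactly the twist $v^{i\cdot|x''|}\,t^{\langle|x''|,i\rangle - \langle i,|x''|\rangle}$ appearing in (i). Summing and invoking (i) gives $r_i^+((x'x'')^+)K_i\otimes E_i$; every other term lands in ``$\cdots$''. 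The second equality in (iv) is symmetric, with the twist arising from pushing $E_i$ past $(x'')^+$ and (ii) playing the role of (i).

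The main obstacle is the exponent bookkeeping in part (iv): one must verify that the $v$- and $t$-scalars produced by (R2) when pushing $K_i$ or $E_i$ across a homogeneous element of $U_{v,t}^+$ line up precisely with the twists built into the derivations $r_i$ and ${}_ir$. Once the exponents are written side by side, bilinearity of $\langle\cdot,\cdot\rangle$ and of ``$\cdot$'' forces the cancellations, but this is the step where an off-by-one in the exponent is most likely to slip in.
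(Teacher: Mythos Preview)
Your proposal is correct and follows essentially the same approach as the paper: (i) and (ii) are read off from the definitions of $r_i$, ${}_ir$ and $\iota^+$, (iii) is proved by induction on $|x|$ using (R3) as the base case and the Leibniz-type expansion for the inductive step, and (iv) is proved by induction using (i) and (ii). One small slip: in the second equality of (iv) the twist scalar comes from commuting $K_{|x'|}$ past $E_i$ in the left tensor factor, not from pushing $E_i$ past $(x'')^+$; once you make that correction the exponents match (ii) exactly.
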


\begin{proof}
Statement $({\rm i})$ (resp. $({\rm ii})$) directly follows the definition of $\iota^+$ and $r_i$ (resp.$\ {}_ir$).

We now show $({\rm iii})$ by induction on $|x|$.
The case that $x=\theta_i$ is trivial.
Assume that $({\rm iii})$ holds for $x'$ and $x''$. Then we have
\begin{align*}
&\quad \quad (x'x'')^+F_i-F_i(x'x'')^+\\
&=x'^+(F_ix''^+ + \cfrac{r^+_i(x''^+)K_i-K'_i({}_ir^+(x''^+))}{v_i-v_i^{-1}})-F_ix'^+x''^+\\
&=\cfrac{r^+_i(x'^+)K_i-K'_i({}_ir^+(x'^+))}{v_i-v_i^{-1}}x''^+ +
x'^+\cfrac{r^+_i(x''^+)K_i-K'_i({}_ir^+(x''^+))}{v_i-v_i^{-1}}\\
&=\cfrac{r^+_i(x'^+)K_ix''^++x'^+r^+_i(x''^+)K_i}{v_i-v_i^{-1}}-
\cfrac{K'_i({}_ir^+(x'^+))x''^++x'^+K'_i({}_ir^+(x''^+))}{v_i-v_i^{-1}}\\
&=\cfrac{r^+_i((x'x'')^+)K_i+K'_i({}_ir^+((x'x'')^+))}{v_i-v_i^{-1}}.
\end{align*}
This proves $({\rm iii})$.

By $({\rm i})$ and $({\rm ii})$, statement $({\rm iv})$ can be shown by induction on $|x|$.
\end{proof}

\begin{lem}\label{lem:deltar-}
For all  $y, y', y'' \in \mathfrak{f},$ let $r^-_i(x^-)=\iota^- \circ r_i(x)$ and ${}_ir^-(x^-)=\iota^- \circ {}_ir(x)$.
Then we have\\
$(\mathrm{i})\ \ \ r^-_i((y'y'')^-)=v^{i\cdot |y''|}t^{\langle i,|y''|\rangle-\langle |y''|,i\rangle}r^-_i(y'^-)y''^- +y'^-r^-_i(y''^-)$,\\
$(\mathrm{ii})\ \  {}_ir^-((y'y'')^-)={}_ir^-(y'^-)y''^- +v^{i\cdot |y'|}t^{\langle |y'|,i\rangle-\langle i,|y'|\rangle}y'^-{}_ir(y''^-)$,\\
$(\mathrm{iii})\ E_iy^- -y^-E_i=\cfrac{K_i({}_ir^-(y^-))-r^-_i(y^-)K'_i}{v_i-v_i^{-1}}$,\\
$(\mathrm{iv})\ \Delta(y^-)=y^-\otimes K'_{|y|}+\sum_{i}{}_ir^-(y^-)\otimes F_iK'_{|y|-i}+\cdots
=1\otimes y^-+\sum_{i}F_i\otimes r^-_i(y^-)K'_i+\cdots$.
\end{lem}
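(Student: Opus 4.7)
The plan is to mirror the proof of Lemma~\ref{lem:deltar+} for the $U^+_{v,t}$ side, using the fact that $\iota^-:\mathfrak{f}\to U^-_{v,t}$ substitutes $t\mapsto t^{-1}$. This sign flip in the $t$-exponents together with the reversed tensor order in $\Delta(F_i)=1\otimes F_i+F_i\otimes K'_i$ accounts for all discrepancies between the two lemmas. Parts $(\mathrm{i})$ and $(\mathrm{ii})$ I would deduce directly from the defining recursions for $r_i$ and ${}_ir$ in Section~\ref{sec:freealgebra} by applying $\iota^-$: starting from $r_i(xy)=v^{i\cdot|y|}t^{\langle|y|,i\rangle-\langle i,|y|\rangle}r_i(x)y+xr_i(y)$ and replacing $t$ by $t^{-1}$ yields exactly $(\mathrm{i})$, and the analogous substitution in the recursion for ${}_ir$ gives $(\mathrm{ii})$.

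For $(\mathrm{iii})$ I would induct on $|y|$. The base case $y=\theta_j$ is immediate from relation (R3), since $r_i^-(\theta_j^-)={}_ir^-(\theta_j^-)=\delta_{ij}$. For the inductive step, write $y=y'y''$ and apply the inductive hypothesis to $y'^-$ and $y''^-$ to express $E_i(y'y'')^- -(y'y'')^- E_i$ as a sum of four terms involving $K_i({}_ir^-(y'^-))y''^-$, $r_i^-(y'^-)K'_i y''^-$, $y'^- K_i({}_ir^-(y''^-))$ and $y'^- r_i^-(y''^-)K'_i$. Using (R2) to commute $K_i$ past $y'^-$ and $K'_i$ past $y''^-$ produces precisely the scalars $v^{i\cdot|y'|}t^{\langle|y'|,i\rangle-\langle i,|y'|\rangle}$ and $v^{i\cdot|y''|}t^{\langle i,|y''|\rangle-\langle|y''|,i\rangle}$ that appear when $K_i\cdot{}_ir^-((y'y'')^-)$ and $r_i^-((y'y'')^-)\cdot K'_i$ are expanded by the formulas in $(\mathrm{i})$--$(\mathrm{ii})$, so the four terms regroup into the claimed expression.

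For $(\mathrm{iv})$ I would again induct on $|y|$, with base case the given coproduct $\Delta(F_i)=1\otimes F_i+F_i\otimes K'_i$. For the inductive step, compute $\Delta(y'^- y''^-)=\Delta(y'^-)\Delta(y''^-)$ in $U_{v,t}\otimes U_{v,t}$ using the two inductive hypotheses and collect the terms of the required shapes $y^-\otimes K'_{|y|}$, ${}_ir^-(y^-)\otimes F_iK'_{|y|-i}$, $1\otimes y^-$, and $F_i\otimes r_i^-(y^-)K'_i$. The accumulation of $K'_{|y|-i}$ on the right slot comes from the fact that each $F$ carries a $K'$ through its coproduct, which must be pushed past subsequent $F$'s via the multiplication on $U_{v,t}\otimes U_{v,t}$; parts $(\mathrm{i})$ and $(\mathrm{ii})$ then identify the coefficients with $r_i^-$ and ${}_ir^-$.

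The main obstacle is the bookkeeping of the $v$- and $t$-exponents in $(\mathrm{iii})$ and $(\mathrm{iv})$: one must carefully track how the scalars produced by (R2) combine with those coming from the inductive hypothesis, so that everything regroups into a single formula in terms of $r_i^-$ and ${}_ir^-$. The substitution $t\mapsto t^{-1}$ built into $\iota^-$ is the ingredient that makes the exponents match, and once that is understood the proof runs in parallel with that of Lemma~\ref{lem:deltar+}.
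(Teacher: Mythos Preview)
Your proposal is correct and takes essentially the same approach as the paper, which simply states that the proof is similar to that of Lemma~\ref{lem:deltar+}. Your explicit identification of the substitution $t\mapsto t^{-1}$ built into $\iota^-$ as the mechanism that flips the $t$-exponents, together with the parallel inductions for $(\mathrm{iii})$ and $(\mathrm{iv})$, is exactly what that remark is asking the reader to carry out.
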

The proof of this lemma is similar to that of Lemma \ref{lem:deltar+}.

\subsection{The quasi-$\mathcal{R}$-matrix $\Theta$}
In this section, we shall simply write $\mathbf{U}$ instead of  $U_{v,t}$.

We define a bar involution $\bar{\cdot}:\mathbf{U}\rightarrow \mathbf{U}$ such that
\begin{equation*}
\begin{split}
&\bar{E_i}=E_i,\ \bar{F_i}=F_i,\ \bar{K_i}=K'_i,\ \bar{K'_i}=K_i;\\
&\overline{px}=\bar{p}\cdot\bar{x},\ \forall p\in \mathbb{Q}(v,t), x\in \mathbf{U}.
\end{split}
\end{equation*}

Let $\bar{\cdot}:\mathbf{U}\otimes \mathbf{U}\rightarrow \mathbf{U}\otimes \mathbf{U}$
be the $\mathbb{Q}(t)$-algebra homomorphism given by $\bar{\cdot}\otimes \bar{\cdot }$
and $\overline{\Delta}:\mathbf{U}\rightarrow \mathbf{U}\otimes \mathbf{U}$
the $\mathbb{Q}(t)$-algebra homomorphism given by $\overline{\Delta}(x)=\overline{\Delta (\bar{x})}$.
Thus, we have
\begin{equation}\label{eq:bardelta}
\begin{split}
&\overline{\Delta}(E_i)=E_i\otimes 1+K'_i\otimes E_i,\quad \overline{\Delta}(K_i^{\pm 1})=K_i^{\pm 1}\otimes K_i^{\pm 1},\\
&\overline{\Delta}(F_i)=1\otimes F_i+F_i\otimes K_i,\quad \overline{\Delta}(K_i^{'\pm 1})=K_i^{'\pm 1}\otimes K_i^{'\pm 1}.
\end{split}
\end{equation}

Let $(\mathbf{U}\otimes \mathbf{U})^{\wedge}$ be the completion of the vector space $\mathbf{U}\otimes \mathbf{U}$
with respect to the descending sequence of vector spaces
$$
\mathcal{H}_N=(\mathbf{U}^+\mathbf{U}^0(\sum_{{\rm tr} \nu \geq N}\mathbf{U}^-_{\nu}))\otimes \mathbf{U}
+ \mathbf{U}\otimes (\mathbf{U}^-\mathbf{U}^0(\sum_{{\rm tr} \nu \geq N}\mathbf{U}^+_{\nu}))
$$
for $N=1,2,\cdots $ .

We set
\begin{equation*}
\{i,j\}=v^{i\cdot j}t^{\langle j, i\rangle - \langle i, j\rangle},\quad \forall i,j \in I,
\end{equation*}
which is a multiplicative bilinear form on $\mathbb{Z}[I]\times \mathbb{Z}[I]$.
\begin{lem}\cite[Proposition 4]{FX}\label{lem:hopfpair}
Let $\mathbf{U}_{\geq0}$ (resp. $\mathbf{U}_{\leq0}$) be the subalgebra of $\mathbf{U}$
generated by $E_i$ and $K_i$ (resp. $F_i$ and $K'_i$) for all $i$ in $I$.
We denote $K_{-\mu}$(resp. $K'_{-\mu}$) by $K^{-1}_{\mu}$(resp. $K'^{-1}_{\mu}$)
for all $\mu \in \mathbb{N}[I]$.
There is a skew-Hopf pairing
$(,)_{\phi}:\mathbf{U}_{\geq0} \times \mathbf{U}_{\leq0}\rightarrow \mathbb{Q}(v,t)$ such that\\
$(\mathrm{i})\ \ (1,1)_{\phi}=1,\\
(\mathrm{ii})\ (E_i, F_j)_{\phi}=\delta_{ij}(v_i^{-1}-v_i)^{-1},\quad \forall i,j\in I,\\
(\mathrm{iii})(K_{\mu}x, K'_{\nu}y)_{\phi}=\{\mu, \nu\}\{\mu, |y|\}\{|x|, \nu\}(x,y)_{\phi},\
\forall \mu,\nu\in \mathbb{Z}[I],\ x\in \mathbf{U}^+,\ y\in \mathbf{U}^-.$
\end{lem}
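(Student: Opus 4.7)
The plan is to construct the pairing in two stages: first define it on $\mathbf{U}^+\times \mathbf{U}^-$ from the bilinear form of Proposition \ref{prop:bilinearform}, then extend to the full Cartan-containing subalgebras by declaring (iii) to be the defining formula.

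For homogeneous $x, y\in\mathfrak{f}$, I would set $(x^+, y^-)_\phi := (x,y)$ (up to the normalization forced by (ii)) when $|x|=|y|$, and zero otherwise. Since the radical $\mathfrak{J}$ is killed by $(-,-)$, this descends to a well-defined pairing on $\mathfrak{f}\times \mathfrak{f}$, and hence via the surjections $\iota^\pm:\mathfrak{f}\rightarrow\mathbf{U}^\pm$ to one on $\mathbf{U}^+\times \mathbf{U}^-$; properties (i) and (ii) hold by construction. Extending via (iii), the well-definedness against relations $(R1)$ and $(R2)$ follows because the multiplicative bicharacter $\{-,-\}$ records exactly the conjugation factors of $K_\mu$ and $K'_\nu$ on $E_i$ and $F_i$ appearing in (R2).

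It then remains to verify the defining axioms of a skew-Hopf pairing: the unit/counit conditions
\[
(1,c)_\phi = \varepsilon(c), \qquad (a,1)_\phi = \varepsilon(a),
\]
and the compatibility of each slot with multiplication via the (opposite) coproduct in the other slot, expressing $(ab,c)_\phi$ and $(a,bc)_\phi$ in terms of $\Delta(c)$ and $\Delta(a)$, respectively. The purely nilpotent cases of these multiplicative axioms are exactly Proposition \ref{prop:bilinearform}(c),(d), while the mixed cases in which $a$ or $c$ involves a Cartan factor are handled by Lemmas \ref{lem:deltar+} and \ref{lem:deltar-}, whose recursions for $\Delta$ on $\mathbf{U}^\pm$ produce precisely the correct Cartan tails.

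The hard part is matching three sources of $t$- and $v$-twists: the factor $t^{2[|x_1|,|x_2|]}$ from Proposition \ref{prop:bilinearform}'s pairing on ${}'\!\mathfrak{f}\otimes{}'\!\mathfrak{f}$, the braiding scalar $v^{|y_1|\cdot|x_2|}t^{\langle|y_1|,|x_2|\rangle - \langle|x_2|,|y_1|\rangle}$ built into the twisted product on that tensor, and the bicharacter $\{-,-\}$ in (iii). All three must cooperate to produce identical scalars on both sides of the skew-Hopf identities, especially when $\Delta(E_i)=E_i\otimes 1 + K_i\otimes E_i$ and $\Delta(F_i)=1\otimes F_i + F_i\otimes K'_i$ interlace Cartan and nilpotent tensor factors. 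Apart from this bookkeeping, every step reduces to results already proved in the excerpt.
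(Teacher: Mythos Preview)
The paper does not prove this lemma at all: it is imported wholesale from \cite[Proposition~4]{FX} and stated without argument, so there is no in-paper proof to compare your proposal against.

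Your outline is the standard construction and is essentially correct. Two small points worth tightening. First, the normalization mismatch between $(\theta_i,\theta_j)=\delta_{ij}(1-v_i^{-2})^{-1}$ in Proposition~\ref{prop:bilinearform} and $(E_i,F_j)_\phi=\delta_{ij}(v_i^{-1}-v_i)^{-1}$ here is not a single global constant but a weight-graded factor (of the shape $(-1)^{\mathrm{tr}\,\nu}v_\nu$ on $\mathbf{U}^+_\nu\times\mathbf{U}^-_{-\nu}$); that factor must be tracked through the multiplicative axioms, not just absorbed at the end. Second, the maps $\iota^\pm$ are isomorphisms onto $\mathbf{U}^\pm$, not merely surjections, which is what makes ``transport the form along $\iota^\pm$'' well-defined without further checking of relations. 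With those caveats, your sketch is a faithful blueprint of the argument one expects to find in \cite{FX}.
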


For any homogenous elements $x\in \mathbf{U}^+, y \in \mathbf{U}^-$, we have
\begin{equation}\label{eq:innerproduct}
(xK_{\mu},yK'_{\nu})_{\phi}=(x,y)_{\phi}(K_{\mu},K'_{\nu})_{\phi},\quad \forall \mu, \nu \in \mathbb{Z}[I].
\end{equation}
By Lemma \ref{lem:deltar+}$(\mathrm{iv})$ and Lemma \ref{lem:deltar-}$(\mathrm{iv})$, we have
\begin{equation*}
\begin{split}
(xK_{\mu},yK'_{\nu})_{\phi}&=(\Delta(x)\Delta(K_{\mu}),y\otimes K'_{\nu})_{\phi}\\
&=(xK_{\mu},y)_{\phi}(K_{\mu},K'_{\nu})_{\phi}\\
&=(x\otimes K_{\mu},\Delta^{op}(y))_{\phi}(K_{\mu},K'_{\nu})_{\phi}\\
&=(x,y)_{\phi}(K_{\mu},K'_{\nu})_{\phi}.
\end{split}
\end{equation*}
This proves \eqref{eq:innerproduct}.
\begin{lem}\label{lem:phiprop}
For all $x \in \mathbf{U}^+$ and $y \in  \mathbf{U}^-$, we have\\
$(\mathrm{i})\ \ \ (x,F_iy)_{\phi}=(v_i^{-1}-v_i)^{-1}({}_ir^+(x),y)_{\phi}$,\\
$(\mathrm{ii})\ \  (x,yF_i)_{\phi}=(v_i^{-1}-v_i)^{-1}(r_i^+(x),y)_{\phi}$,\\
$(\mathrm{iii})\ (E_ix,y)_{\phi}=(v_i^{-1}-v_i)^{-1}(x,{}_ir^-(y))_{\phi}$,\\
$(\mathrm{iv})\ (xE_i,y)_{\phi}=(v_i^{-1}-v_i)^{-1}(x,r_i^-(y))_{\phi}$.
\end{lem}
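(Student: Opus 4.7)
The plan is to combine the two standard Hopf-pairing axioms, $(a, bc)_\phi = (\Delta(a), b\otimes c)_\phi$ and $(ab,c)_\phi=(a\otimes b, \Delta^{op}(c))_\phi$, both already used implicitly in the derivation of \eqref{eq:innerproduct}, with the explicit coproduct expansions in Lemma \ref{lem:deltar+}(iv) and Lemma \ref{lem:deltar-}(iv). The decisive input is that $(\,,\,)_\phi$ is homogeneous with respect to the $\mathbb{N}[I]$-grading, so $(\mathbf{U}^+_\alpha \mathbf{U}^0, \mathbf{U}^-_\beta \mathbf{U}^0)_\phi = 0$ unless $\alpha = \beta$; this will kill all but one summand in each expansion.

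For (i), I would write $(x, F_i y)_\phi = (\Delta(x), F_i\otimes y)_\phi$ and substitute the second expansion of $\Delta(x)$ from Lemma \ref{lem:deltar+}(iv), namely $\Delta(x) = K_{|x|}\otimes x + \sum_j E_j K_{|x|-j} \otimes {}_jr^+(x) + \cdots$. Since the first tensor slot must pair with $F_i \in \mathbf{U}^-_i$, only the $j=i$ term survives and yields $(E_i K_{|x|-i}, F_i)_\phi \cdot ({}_ir^+(x), y)_\phi$. The prefactor collapses to $(v_i^{-1}-v_i)^{-1}$ via \eqref{eq:innerproduct} together with $(K_{|x|-i}, K'_0)_\phi=1$ and Lemma \ref{lem:hopfpair}(ii). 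Part (ii) is obtained symmetrically from the first form $\Delta(x) = x\otimes 1 + \sum_j r_j^+(x) K_j\otimes E_j + \cdots$ by isolating the weight-$i$ piece in the second tensor slot.

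Parts (iii) and (iv) proceed dually. I would write $(E_i x, y)_\phi = (E_i\otimes x, \Delta^{op}(y))_\phi$ and $(xE_i,y)_\phi = (x\otimes E_i, \Delta^{op}(y))_\phi$, and then invoke the two forms of $\Delta(y)$ in Lemma \ref{lem:deltar-}(iv) (taking their opposites): for (iii) use $\Delta^{op}(y) = K'_{|y|}\otimes y + \sum_j F_j K'_{|y|-j}\otimes {}_jr^-(y) + \cdots$ and for (iv) use $\Delta^{op}(y) = y\otimes 1 + \sum_j r_j^-(y) K'_j\otimes F_j + \cdots$. In each case, only the weight-$i$ component of the slot pairing with $E_i$ contributes, and the leftover $K'$-shifts are absorbed using \eqref{eq:innerproduct}. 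The only real obstacle is bookkeeping: one must verify in each case that the surviving $K$- and $K'$-factors pair to $1$ with their partners (via $(K_\mu, K'_0)_\phi = (K_0, K'_\nu)_\phi = 1$, immediate from Lemma \ref{lem:hopfpair}(iii)) and that the higher-degree ellipsis terms in the expansions are killed by weight mismatch. These checks are routine but have to be done case by case.
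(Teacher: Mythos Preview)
Your proposal is correct and follows essentially the same route as the paper: the paper proves only (i) in detail, writing $(x,F_iy)_\phi=(\Delta(x),F_i\otimes y)_\phi$, inserting the second expansion from Lemma~\ref{lem:deltar+}(iv), and using \eqref{eq:innerproduct} together with Lemma~\ref{lem:hopfpair} to reduce $(E_iK_{|x|-i},F_i)_\phi$ to $(v_i^{-1}-v_i)^{-1}$, leaving the remaining three parts to the reader. Your treatment of (ii)--(iv) via the other coproduct expansion and $\Delta^{op}$ is exactly the intended ``similar'' argument.
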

\begin{proof}
The proofs of the four equations are similar. We shall only show $(\mathrm{i})$ and left others to readers.

By the definition of skew-Hopf pairing $(,)_{\phi}$ in \cite[Scetion 2.2]{Xiao1}, Lemma \ref{lem:deltar+}$(\mathrm{iv})$
and Lemma \ref{lem:hopfpair}$(\mathrm{iii})$, we have
\begin{equation*}
\begin{split}
(x,F_iy)_{\phi}&=(\Delta(x),F_i\otimes y)_{\phi}\\
&=(E_iK_{|x|-i}\otimes {}_ir^+(x), F_i\otimes y)_{\phi}\\
&=(E_iK_{|x|-i}, F_i)_{\phi}({}_ir^+(x), y)_{\phi}\\
&=(E_i,F_i)_{\phi}({}_ir^+(x), y)_{\phi}\\
&=(v^{-1}_i-v_i)^{-1}({}_ir^+(x), y)_{\phi}.
\end{split}
\end{equation*}
\end{proof}

\begin{lem}\label{lem:hopfpairprop}
For all $x, y\in \mathfrak{f}$, we have $(x^+,y^-)_{\phi}=(\sigma^+(x^+),\sigma^-(y^-))_{\phi}$.
\end{lem}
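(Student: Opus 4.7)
The plan is to induct on $|y|$, peeling off one generator $F_i$ from $y^-$ at each step via Lemma \ref{lem:phiprop} and using Lemma \ref{lem:sigma} to compare what remains. The base case $|y| = 0$ reduces by $\mathbb{Q}(v,t)$-linearity to $y = 1$; since the pairing is graded (and vanishes unless $|x| = |y|$), we may also assume $x = 1$, in which case both sides equal $(1,1)_\phi = 1$ by Lemma \ref{lem:hopfpair}(i).

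For the inductive step, by linearity we may assume $y = \theta_i y'$ with $|y'| = |y| - i$. The left-hand side transforms by Lemma \ref{lem:phiprop}(i) into
$$(x^+, y^-)_\phi \;=\; (v_i^{-1} - v_i)^{-1}\, ({}_ir^+(x^+),\, (y')^-)_\phi.$$
For the right-hand side, I would first apply \eqref{eq:sigma-f} to $\sigma(\theta_i y')$ and then $\iota^-$ (which inverts $t$) to obtain
$$\sigma^-(y^-) \;=\; t^{\langle i, |y'|\rangle - \langle |y'|, i\rangle}\, \sigma^-((y')^-)\, F_i,$$
so that Lemma \ref{lem:phiprop}(ii) gives
$$(\sigma^+(x^+), \sigma^-(y^-))_\phi \;=\; t^{\langle i, |y'|\rangle - \langle |y'|, i\rangle} (v_i^{-1} - v_i)^{-1}\, (r_i^+(\sigma^+(x^+)),\, \sigma^-((y')^-))_\phi.$$
The key input is then the identity
$$r_i^+(\sigma^+(x^+)) \;=\; t^{\langle |x|-i, i\rangle - \langle i, |x|-i\rangle}\, \sigma^+({}_ir^+(x^+)),$$
which I would derive by reading off the $(-) \otimes \theta_i$ summand of both sides of $r(\sigma(x)) = (\sigma \otimes \sigma)\,{}^t r(x)$ from Lemma \ref{lem:sigma}, using that $\sigma(\theta_i) = \theta_i$. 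Since the pairing vanishes for $|x| \neq |y|$, we have $|x| - i = |y'|$, so the two $t$-exponents cancel; the induction hypothesis applied to $({}_ir^+(x^+), y')$, with $|y'| < |y|$, then closes the computation.

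The principal technical obstacle is the bookkeeping of $t$-exponents: because $\iota^-$ inverts $t$, the $t$-scalar produced when permuting $\sigma$ through a product in $\mathfrak{f}$ reappears in $\mathbf{U}^-$ with the opposite sign, and one must verify that this precisely matches the $t$-scalar produced by Lemma \ref{lem:sigma} when relating $r_i \circ \sigma$ and $\sigma \circ {}_ir$. Once these exponents are aligned, the induction runs without further complication.
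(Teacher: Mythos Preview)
Your proof is correct. Both your argument and the paper's rest on Lemma \ref{lem:sigma}, but they deploy it differently: the paper inducts on a factorization $x = x_1 x_2$, expands $(\sigma^+(x_1^+x_2^+), \sigma^-(y^-))_\phi$ and $(x_1^+x_2^+, y^-)_\phi$ via the full skew-Hopf pairing axiom $(ab,c)_\phi = (a\otimes b, \Delta^{op}(c))_\phi$, and compares the resulting sums using the formula for $\Delta(\sigma(y)^-)$ derived from Lemma \ref{lem:sigma}. You instead induct on $\mathrm{tr}\,|y|$, peel off a single $F_i$ on each side using Lemma \ref{lem:phiprop}, and invoke only the degree-$i$ component of Lemma \ref{lem:sigma} (the identity $r_i\circ\sigma = t^{\langle |x|-i,i\rangle - \langle i,|x|-i\rangle}\,\sigma\circ{}_ir$). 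Your route is slightly more elementary in that it avoids tracking the whole comultiplication at once, at the cost of an extra bookkeeping step with the $t$-exponents; the paper's route is more symmetric and makes the role of Lemma \ref{lem:sigma} more transparent. Both are equally valid.
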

\begin{proof}
It is straightforward to check it when
$x=\theta_i$ and $y=\theta_j$ for some $i,j\in I$.
Assume that the lemma holds for $x_1$ and $x_2$. We shall show that it holds for $x=x_1x_2$.

Let  $y\in \mathfrak{f}$ and  $r(y)=\sum y_1 \otimes y_2$ with $y_1, y_2$ homogeneous.
Then we have
$$
\Delta(y^-)=\sum v^{-|y_1|\cdot |y_2|}t^{\langle|y_1|,|y_2|\rangle-
\langle|y_2|,|y_1|\rangle}y^-_2\otimes K'_{|y_2|}y^-_1.
$$
By Lemma \ref{lem:sigma}, $r(\sigma(y))=\sum t^{\langle|y_2|,|y_1|\rangle-
\langle|y_1|,|y_2|\rangle}\sigma(y_2)\otimes \sigma(y_1)$.
Then we have
\begin{equation}\label{eq:sigmay-}
\Delta(\sigma(y)^-)=\sum v^{-|y_1|\cdot|y_2|}\sigma(y_1)^-\otimes K'_{|y_1|}\sigma(y_2)^-.
\end{equation}
By \eqref{eq:sigma-f}, \eqref{eq:sigmay-} and Lemma \ref{lem:hopfpair}$(\mathrm{iii})$, we have
\begin{equation}\label{eq:sigmaphir}
\begin{split}
&(\sigma^+(x_1^+x_2^+), \sigma^-(y^-))_{\phi}\\
=&t^{\langle|x_2|,|x_1|\rangle-\langle|x_1|,|x_2|\rangle}(\sigma^+(x_2^+)\sigma^+(x_1^+), \sigma^-(y^-))_{\phi}\\
=&t^{\langle|x_2|,|x_1|\rangle-\langle|x_1|,|x_2|\rangle}(\sigma^+(x_2^+)\otimes\sigma^+(x_1^+), \Delta^{op}(\sigma^-(y^-)))_{\phi}\\
=&\sum t^{\langle|x_2|,|x_1|\rangle-\langle|x_1|,|x_2|\rangle}v^{-|y_1|\cdot|y_2|}
(\sigma^+(x_2^+)\otimes\sigma^+(x_1^+),K'_{|y_1|}\sigma^-(y_2^-)\otimes\sigma^-(y_1^-))_{\phi}\\
=&\sum t^{\langle|x_2|,|x_1|\rangle-\langle|x_1|,|x_2|\rangle}v^{-|y_1|\cdot|y_2|}
(\sigma^+(x_2^+),K'_{|y_1|}\sigma^-(y_2^-))_{\phi}(\sigma^+(x_1^+),\sigma^-(y_1^-))_{\phi}\\
=&\sum (\sigma^+(x_2^+),\sigma^-(y_2^-))_{\phi}(\sigma^+(x_1^+),\sigma^-(y_1^-))_{\phi}.
\end{split}
\end{equation}

Similarly, we have
\begin{equation}\label{eq:sigmaphil}
\begin{split}
&(x_1^+x_2^+, y^-)_{\phi}=(x_1^+\otimes x_2^+, \Delta^{op}(y^-))_{\phi}\\
=&\sum v^{-|y_1|\cdot|y_2|}t^{\langle|y_1|,|y_2|\rangle-\langle|y_2|,|y_1|\rangle}
(x_1^+\otimes x_2^+, K'_{|y_2|}y_1^-\otimes y_2^-)_{\phi}\\
=&\sum v^{-|y_1|\cdot|y_2|}t^{\langle|y_1|,|y_2|\rangle-\langle|y_2|,|y_1|\rangle}
(x_1^+, K'_{|y_2|}y_1^-)_{\phi}(x_2^+,y_2^-)_{\phi}\\
=&\sum (x_1^+,y_1^-)_{\phi}(x_2^+,y_2^-)_{\phi}.
\end{split}
\end{equation}

By \eqref{eq:sigmaphir} and \eqref{eq:sigmaphil}, we have
$$(x_1^+x_2^+, y^-)_{\phi}=(\sigma^+(x_1^+x_2^+), \sigma^-(y^-))_{\phi}.$$
This finishes the proof.
\end{proof}
By the relation $(R2)$ in Section \ref{sec:presentation} ,
the subalgebra $\mathbf{U}^+$ has the following decomposition
$$\mathbf{U}^+ = \bigoplus_{\mu\in \mathbb{N}[I]}\mathbf{U}^+_{\mu},$$
where
$$\mathbf{U}^+_{\mu}=\{u\in \mathbf{U}^+ |
K_iu=v^{i\cdot |u|}t^{\langle|u|,i\rangle-\langle i,|u|\rangle}uK_i,\
K'_iu=v^{-i\cdot |u|}t^{\langle|u|,i\rangle-\langle i,|u|\rangle}uK'_i,\ \forall i \in I \}.$$
The weight space $\mathbf{U}^+_{\mu}$ is spanned by all the monomials $E_{i_1}\cdots E_{i_l}$
with grading $\mu$.

Similarly, the subalgebra $\mathbf{U}^-$ has a decomposition
$\mathbf{U}^- = \bigoplus_{\mu\in \mathbb{N}[I]}\mathbf{U}^-_{-\mu}$
and the spaces $\mathbf{U}^+_{\mu}$ and $\mathbf{U}^-_{-\mu}$
are nondegenerately paired under the skew-Hopf pairing $(,)_{\phi}$.
Then we may select a basis $\mathbf{B}$ of $\mathbf{U}^-$ such that
 $\mathbf{B}_{\mu}=\mathbf{B}\cap\mathbf{U}^-_{-\mu}$.
Let $\{b^*|b\in \mathbf{B}_{\mu}\}$ be the basis of $\mathbf{U}^+_{\mu}$
dual to $\mathbf{B}_{\mu}$ under $(,)_{\phi}$.

\begin{lem}\label{lem:delta}
Let $x\in \mathbf{U}^+_{\lambda}$ and $y\in \mathbf{U}^-_{-\lambda}$
for any $\lambda \in \mathbb{N}[I]$. Then,\\
$(\mathrm{i})\ \ \Delta(x)=\sum_{0\leq \mu \leq \lambda}
\sum_{b\in \mathbf{B}_{\mu},b'\in \mathbf{B}_{\lambda-\mu}}
  (x,b'b)_{\phi}b'^*K_{\mu}\otimes b^*,$\\
$(\mathrm{ii})\ \Delta(y)=\sum_{0\leq \mu \leq \lambda}
\sum_{b\in \mathbf{B}_{\mu},b'\in \mathbf{B}_{\lambda-\mu}}
  (b'^*b^*,y)_{\phi}b\otimes b'K'_{\mu}.$
\end{lem}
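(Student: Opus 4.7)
The plan is to proceed in three steps for (i), with (ii) following by a symmetric argument.

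First, I would establish the triangular shape of $\Delta(x)$. Using $\Delta(E_i)=E_i\otimes 1+K_i\otimes E_i$ together with the twisted multiplication on $\mathbf U\otimes \mathbf U$, induction on the height of $\lambda$ (this is essentially what is packaged in Lemma~\ref{lem:deltar+}(iv)) shows that for $x\in\mathbf U^+_\lambda$,
\[
\Delta(x)\in \bigoplus_{0\le \mu\le \lambda}\mathbf U^+_{\lambda-\mu}K_\mu\otimes \mathbf U^+_\mu.
\]
Since the skew-Hopf pairing is nondegenerate on $\mathbf U^+_\mu\times \mathbf U^-_{-\mu}$, the dual family $\{b^*:b\in \mathbf B_\mu\}$ is a basis of $\mathbf U^+_\mu$, so one can write
\[
\Delta(x)=\sum_{0\le\mu\le\lambda}\sum_{b\in \mathbf B_\mu,\ b'\in\mathbf B_{\lambda-\mu}}c^x_{b,b'}\, b'^*K_\mu\otimes b^*
\]
for uniquely determined scalars $c^x_{b,b'}\in\mathbb Q(v,t)$.

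Second, I would extract the scalars by pairing $\Delta(x)$ against $\alpha\otimes\beta$ with $\alpha\in \mathbf B_{\lambda-\mu}$ and $\beta\in \mathbf B_{\mu}$. The compatibility $(x,yz)_\phi=(\Delta(x),y\otimes z)_\phi$ already exploited in the proofs of Lemma~\ref{lem:phiprop} and Lemma~\ref{lem:hopfpairprop} gives
\[
(\Delta(x),\alpha\otimes\beta)_\phi=(x,\alpha\beta)_\phi.
\]
Expanding the left-hand side and using $(b^*,\beta)_\phi=\delta_{b,\beta}$ immediately selects the term with $b=\beta$, so everything reduces to evaluating $(b'^*K_\mu,\alpha)_\phi$.

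Third, I would show $(b'^*K_\mu,\alpha)_\phi=\delta_{b',\alpha}$ for $\alpha\in \mathbf B_{\lambda-\mu}$. Moving $K_\mu$ to the left of $b'^*$ via relation $(R2)$ produces the scalar $\{\mu,\lambda-\mu\}^{-1}$; then Lemma~\ref{lem:hopfpair}(iii) applied to $(K_\mu b'^*,\alpha K'_0)_\phi$ restores a factor $\{\mu,\lambda-\mu\}$, and the two scalars cancel. What remains is $(b'^*,\alpha)_\phi=\delta_{b',\alpha}$ by duality of the bases. Reassembling gives $c^x_{b,b'}=(x,b'b)_\phi$, which is (i). For (ii), the analogous argument uses the triangular shape
\[
\Delta(y)\in\bigoplus_{0\le\mu\le\lambda}\mathbf U^-_{-\mu}\otimes \mathbf U^-_{-(\lambda-\mu)}K'_\mu
\]
furnished by Lemma~\ref{lem:deltar-}(iv), together with the dual compatibility $(xy,z)_\phi=(x\otimes y,\Delta^{op}(z))_\phi$ and the same commutation/$K$-scalar cancellation. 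The main obstacle in both parts is the book-keeping in the third step: one must pin down that the scalar from commuting $K_\mu$ (respectively $K'_\mu$) past a weight vector exactly cancels the scalar produced by Lemma~\ref{lem:hopfpair}(iii), so that no residual $v^{?}t^{?}$ factor survives in the coefficient formula.
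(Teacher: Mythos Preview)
Your proposal is correct and follows essentially the same three-step approach as the paper: establish the triangular shape of $\Delta(x)$, expand in the dual bases, and pair against $b_1\otimes b_2$ using $(x,b_1b_2)_\phi=(\Delta(x),b_1\otimes b_2)_\phi$ to identify the coefficients. The only minor difference is in your third step: the paper disposes of the $K_\mu$ factor in one line via the identity \eqref{eq:innerproduct}, namely $(b'^*K_\mu,b_1)_\phi=(b'^*,b_1)_\phi(K_\mu,1)_\phi=(b'^*,b_1)_\phi$, rather than commuting $K_\mu$ past $b'^*$ and then invoking Lemma~\ref{lem:hopfpair}(iii) to cancel the resulting scalar as you do.
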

\begin{proof}
The proofs of $(\mathrm{i})$ and $(\mathrm{ii})$ are similar. We shall only show $(\mathrm{i})$.

As $x\in \mathbf{U}^+_{\lambda}$,
we have $\Delta(x)\in \sum_{0\leq \mu \leq \lambda}
\mathbf{U}^+_{\lambda-\mu}K_{\mu}\otimes \mathbf{U}^+_{\mu}$.
Let $h^{\mu}_{b,b'}\in \mathbb{Q}(v,t)$ be such that
$$
\Delta(x)=\sum_{0\leq \mu \leq \lambda}
\sum_{b\in \mathbf{B}_{\mu},b'\in \mathbf{B}_{\lambda-\mu}}
h^{\mu}_{b,b'}b'^*K_{\mu}\otimes b^*.
$$
Then for all $b_1\in \mathbf{B}_{\lambda-\mu}, b_2\in \mathbf{B}_{\mu}$  and $\mu$,
we have
\begin{equation*}
\begin{split}
(x,b_1b_2)_{\phi}&=(\Delta(x),b_1\otimes b_2)_{\phi}\\
&=\sum_{0\leq \mu \leq \lambda}\sum_{b\in \mathbf{B}_{\mu},b'\in \mathbf{B}_{\lambda-\mu}}
h^{\mu}_{b,b'}(b'^*K_{\mu}\otimes b^*,b_1\otimes b_2)_{\phi}\\
&=\sum_{0\leq \mu \leq \lambda}\sum_{b\in \mathbf{B}_{\mu},b'\in \mathbf{B}_{\lambda-\mu}}
h^{\mu}_{b,b'}(b'^*K_{\mu},b_1)_{\phi}( b^*,b_2)_{\phi}\\
&=h^{\mu}_{b_2,b_1}.
\end{split}
\end{equation*}
This finishes the proof.
\end{proof}

For each $x\in \mathbf{U}^+_{\mu}$ and $y\in \mathbf{U}^-_{-\mu}$, we have
\begin{equation}\label{eq:pair}
x=\sum_{b\in \mathbf{B}_{\mu}}(x, b)_{\phi}b^*,\quad y=\sum_{b\in \mathbf{B}_{\mu}}(b^*, y)_{\phi}b.
\end{equation}

For $\mu\in \mathbb{N}[I]$, we define
\begin{equation*}
 \Theta_{\mu}=\sum_{b\in \mathbf{B}_{\mu}}b\otimes b^{*}\in \mathbf{U}^-_{-\mu}\otimes \mathbf{U}^+_{\mu}.
\end{equation*}
Set $\Theta_{\mu}=0$ if $\mu \notin \mathbb{N}[I]$.
\begin{lem}\label{lem:quasiRmatrix}
For all $i\in I, \mu \in \mathbb{N}[I]$, we have\\
$(\mathrm{i})\ \ \ (K_i\otimes K_i)\Theta_{\mu}=\Theta_{\mu}(K_i\otimes K_i)$,\\
$(\mathrm{ii})\ \  (K'_i\otimes K'_i)\Theta_{\mu}=\Theta_{\mu}(K'_i\otimes K'_i)$,\\
$(\mathrm{iii})\ (E_i\otimes 1)\Theta_{\mu}+(K_i\otimes E_i)\Theta_{\mu-i}=
 \Theta_{\mu}(E_i\otimes 1)+\Theta_{\mu-i}(K'_i\otimes E_i)$,\\
$(\mathrm{iv})\ (1\otimes F_i)\Theta_{\mu}+(F_i\otimes K'_i)\Theta_{\mu-i}=
 \Theta_{\mu}(1\otimes F_i)+\Theta_{\mu-i}(F_i\otimes K_i)$.
\end{lem}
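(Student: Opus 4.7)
The plan is to handle the four statements in parallel but with increasing complexity. For parts (i) and (ii) I would simply use that $b\in \mathbf{U}^-_{-\mu}$ and $b^*\in \mathbf{U}^+_\mu$ have opposite weights: conjugating $b$ by $K_i$ produces the scalar $v^{-i\cdot\mu}t^{\langle i,\mu\rangle-\langle\mu,i\rangle}$, while conjugating $b^*$ by $K_i$ produces the reciprocal $v^{i\cdot\mu}t^{\langle\mu,i\rangle-\langle i,\mu\rangle}$; hence $(K_i\otimes K_i)$ commutes with every summand $b\otimes b^*$ of $\Theta_\mu$. The argument for $K'_i$ is identical, using the second defining relation of $\mathbf{U}^+_\mu$ together with the analogous behavior of the $F$-word $b$.

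The substance lies in parts (iii) and (iv). For (iii) I would expand
\[
(E_i\otimes 1)\Theta_\mu - \Theta_\mu(E_i\otimes 1)=\sum_{b\in \mathbf{B}_\mu}[E_i,b]\otimes b^*
\]
and apply Lemma \ref{lem:deltar-}(iii) to rewrite the bracket as $(v_i-v_i^{-1})^{-1}\bigl(K_i\cdot{}_ir^-(b)-r^-_i(b)\cdot K'_i\bigr)$. The proof then reduces to establishing the two tensor identities
\[
\sum_{b\in\mathbf{B}_\mu}{}_ir^-(b)\otimes b^*=(v_i^{-1}-v_i)(1\otimes E_i)\Theta_{\mu-i},\qquad
\sum_{b\in\mathbf{B}_\mu}r^-_i(b)\otimes b^*=(v_i^{-1}-v_i)\Theta_{\mu-i}(1\otimes E_i).
\]
To prove the first, I expand ${}_ir^-(b)\in \mathbf{U}^-_{-\mu+i}$ in the basis $\mathbf{B}_{\mu-i}$ using \eqref{eq:pair}, so that the coefficient of $c$ is $(c^*,{}_ir^-(b))_\phi$. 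Lemma \ref{lem:phiprop}(iii) converts this coefficient to $(v_i^{-1}-v_i)(E_ic^*,b)_\phi$, after which a second application of \eqref{eq:pair} (this time with $E_ic^*\in \mathbf{U}^+_\mu$) collapses the sum over $b$ into $E_ic^*$, yielding the right-hand side. The second identity is obtained the same way from Lemma \ref{lem:phiprop}(iv). Substituting both identities back and simplifying the factor $(v_i-v_i^{-1})^{-1}(v_i^{-1}-v_i)=-1$ produces exactly the desired rearrangement.

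Part (iv) is entirely parallel with the roles of the two tensor factors exchanged. The commutator $(1\otimes F_i)\Theta_\mu-\Theta_\mu(1\otimes F_i)$ is rewritten via Lemma \ref{lem:deltar+}(iii) in terms of $r^+_i(b^*)$ and ${}_ir^+(b^*)$, and the same duality argument — using Lemma \ref{lem:phiprop}(i)(ii) in place of (iii)(iv), together with \eqref{eq:pair} on $\mathbf{U}^-$ — produces
\[
\sum_b b\otimes {}_ir^+(b^*)=(v_i^{-1}-v_i)(F_i\otimes 1)\Theta_{\mu-i},\qquad
\sum_b b\otimes r^+_i(b^*)=(v_i^{-1}-v_i)\Theta_{\mu-i}(F_i\otimes 1).
\]

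The main obstacle is not conceptual but bookkeeping: keeping track of which factor carries $K_i$ versus $K'_i$, of the placement of these group-like elements on the left or right of the $r$-map expressions, and of the cancellation of $(v_i-v_i^{-1})^{-1}$ against $(v_i^{-1}-v_i)$. One subtle point worth double-checking is that the $K'_i$ appearing on the right of $r^-_i(b)$ in Lemma \ref{lem:deltar-}(iii) commutes past the $\otimes$ correctly, i.e., $r^-_i(b)K'_i\otimes b^*=(r^-_i(b)\otimes b^*)(K'_i\otimes 1)$, which is immediate but easy to misalign when $\mu$ appears in the exponents; once this is handled, (iii) and (iv) fall out of the two key tensor identities without further computation.
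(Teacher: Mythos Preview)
Your proposal is correct and follows essentially the same route as the paper: compute the commutator $(E_i\otimes 1)\Theta_\mu-\Theta_\mu(E_i\otimes 1)$, apply Lemma~\ref{lem:deltar-}(iii), expand ${}_ir^-(b)$ and $r_i^-(b)$ in the dual bases via \eqref{eq:pair}, and use Lemma~\ref{lem:phiprop}(iii)(iv) to re-sum. The only cosmetic difference is that you isolate the two auxiliary tensor identities before substituting, whereas the paper carries the $K_i$ and $K'_i$ factors along in a single chain of equalities; the content is identical.
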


\begin{proof}
The first two are easy to check.
We shall show $(\mathrm{iii})$ and $(\mathrm{iv})$ can be shown similarly.
The proof of $(\mathrm{iii})$ goes in a similar way as that for Lemma 4.10 in \cite{BW}.
For the convenience of the readers, we present it here.
By Lemma \ref{lem:deltar-}$(\mathrm{iii})$ ,
Lemma \ref{lem:phiprop}$(\mathrm{iii})$-$(\mathrm{iv})$ and \eqref{eq:pair},
we have
\begin{equation*}
\begin{split}
&(E_i\otimes 1)\Theta_{\mu}-\Theta_{\mu}(E_i\otimes 1)\\
=&\sum_{b\in \mathbf{B}_{\mu}}(E_ib-bE_i)\otimes b^*\\
=&(v_i-v^{-1}_i)^{-1}\sum_{b\in \mathbf{B}_{\mu}}(K_i({}_ir^-(b))-r_i^{-1}(b)K'_i)\otimes b^*\\
=&(v_i-v^{-1}_i)^{-1}(\sum_{b\in \mathbf{B}_{\mu}}K_i\sum_{b'\in \mathbf{B}_{\mu-i}}(b'^*,{}_ir^-(b))_{\phi}b'\otimes b^*
-\sum_{b\in \mathbf{B}_{\mu}}\sum_{b'\in \mathbf{B}_{\mu-i}}(b'^*,r_i^-(b))_{\phi}b'K'_i\otimes b^*)\\
=&-\sum_{b\in \mathbf{B}_{\mu}}K_i\sum_{b'\in \mathbf{B}_{\mu-i}}(E_ib'^*,b)_{\phi}b'\otimes b^*
+\sum_{b\in \mathbf{B}_{\mu}}\sum_{b'\in \mathbf{B}_{\mu-i}}(b'^*E_i,b)_{\phi}b'K'_i\otimes b^*\\
=&\sum_{b'\in \mathbf{B}_{\mu-i}}b'K'_i\otimes \sum_{b\in \mathbf{B}_{\mu}}(b'^*E_i,b)_{\phi} b^*
-\sum_{b'\in \mathbf{B}_{\mu-i}}K_ib'\otimes\sum_{b\in \mathbf{B}_{\mu}}(E_ib'^*,b)_{\phi} b^*\\
=&\sum_{b'\in \mathbf{B}_{\mu-i}}b'K'_i\otimes b'^*E_i-\sum_{b'\in \mathbf{B}_{\mu-i}}K_ib'\otimes E_ib'^*\\
=&\Theta_{\mu-i}(K'_i\otimes E_i)-(K_i\otimes E_i)\Theta_{\mu-i}.
\end{split}
\end{equation*}
This finishes the proof.
\end{proof}

\begin{prop}\label{prop:quasiR}
Let $\Theta_{0}=1\otimes 1$  and
$\Theta=\sum_{\nu \in \mathbb{N}[I]}\Theta_{\nu}\in(\mathbf{U}\otimes \mathbf{U})^{\wedge}$.
Then we have $\Delta(u)\Theta=\Theta\overline{\Delta}(u)$ for all
 $u\in \mathbf{U}$ $($where this identity is in $(\mathbf{U}\otimes \mathbf{U})^{\wedge})$.
\end{prop}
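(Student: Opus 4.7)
The plan is to reduce the identity to a check on the algebra generators $E_i, F_i, K_i^{\pm 1}, K_i'^{\pm 1}$ of $\mathbf{U}$, and then match the two sides term by term in the grading of $\Theta = \sum_\mu \Theta_\mu$, using the four assertions of Lemma \ref{lem:quasiRmatrix}.

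First I would observe that the subset $\mathcal{S} = \{u \in \mathbf{U} \mid \Delta(u)\Theta = \Theta\overline{\Delta}(u)\}$ is a subalgebra of $\mathbf{U}$: if $u, u' \in \mathcal{S}$, then $\Delta(uu')\Theta = \Delta(u)\Delta(u')\Theta = \Delta(u)\Theta\overline{\Delta}(u') = \Theta\overline{\Delta}(u)\overline{\Delta}(u') = \Theta\overline{\Delta}(uu')$, and all multiplications take place in $(\mathbf{U}\otimes\mathbf{U})^{\wedge}$ since left and right multiplication by fixed elements of $\mathbf{U}\otimes\mathbf{U}$ are continuous with respect to the filtration by the $\mathcal{H}_N$. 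Thus it suffices to verify the identity on each generator.

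For $u = K_i^{\pm 1}$ and $u = K_i'^{\pm 1}$, the formulas for $\Delta$ and $\overline{\Delta}$ in \eqref{eq:bardelta} give $\Delta(u) = \overline{\Delta}(u) = u\otimes u$, so the claim becomes $(u\otimes u)\Theta = \Theta(u\otimes u)$; summing over $\mu$ in parts $(\mathrm{i})$ and $(\mathrm{ii})$ of Lemma \ref{lem:quasiRmatrix} yields exactly this. For $u = E_i$, I would expand
\begin{equation*}
\Delta(E_i)\Theta - \Theta\overline{\Delta}(E_i) = \sum_{\mu} \bigl[(E_i\otimes 1)\Theta_\mu + (K_i\otimes E_i)\Theta_\mu - \Theta_\mu(E_i\otimes 1) - \Theta_\mu(K_i'\otimes E_i)\bigr],
\end{equation*}
using $\Delta(E_i) = E_i\otimes 1 + K_i\otimes E_i$ and $\overline{\Delta}(E_i) = E_i\otimes 1 + K_i'\otimes E_i$. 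Reindexing the $(K_i\otimes E_i)\Theta_\mu$ piece by $\mu \mapsto \mu - i$ on the $\mathbf{U}^- \otimes \mathbf{U}^+$-bigrading, the homogeneous component of bidegree $(-\mu, \mu+i)$ in the above sum is
\begin{equation*}
(E_i\otimes 1)\Theta_\mu + (K_i\otimes E_i)\Theta_{\mu-i} - \Theta_\mu(E_i\otimes 1) - \Theta_{\mu-i}(K_i'\otimes E_i),
\end{equation*}
which vanishes by Lemma \ref{lem:quasiRmatrix}$(\mathrm{iii})$. The case $u = F_i$ is entirely analogous, matching bidegree $(-\mu - i, \mu)$ and invoking Lemma \ref{lem:quasiRmatrix}$(\mathrm{iv})$.

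The only genuine subtlety is making sure the rearrangement of infinite sums is legal in the completion $(\mathbf{U}\otimes\mathbf{U})^{\wedge}$: the homogeneous pieces $\Theta_\mu$ and $\Theta_{\mu-i}$ lie in distinct bidegrees, so modulo $\mathcal{H}_N$ only finitely many of them contribute, and the bidegree-by-bidegree matching above is unambiguous. Once this verification is made for each generator, the subalgebra argument closes out the proof.
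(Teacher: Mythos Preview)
Your proposal is correct and is exactly the argument the paper has in mind: the paper's proof is the single line ``This proposition follows from Lemma~\ref{lem:quasiRmatrix},'' and you have spelled out the standard reduction to generators together with the $\mu$-by-$\mu$ matching that makes that deduction precise. One minor quibble: the phrase ``bidegree $(-\mu,\mu+i)$'' is slightly loose since terms like $(E_i\otimes 1)\Theta_\mu$ do not lie in $\mathbf{U}^-\otimes\mathbf{U}^+$, but grouping by the $\mathbf{U}^+$-degree of the second tensor factor (or simply summing Lemma~\ref{lem:quasiRmatrix}(iii)--(iv) over $\mu$ and reindexing) gives exactly what you want.
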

This proposition follows from Lemma \ref{lem:quasiRmatrix}.
The element $\Theta$ defined in this proposition is called the $quasi$-$\mathcal{R}$-$matrix$.

\begin{cor}\label{cor:bartheta}
We have $\Theta \overline{\Theta}=\overline{\Theta}\Theta=1\otimes 1$
$($equality in $(\mathbf{U}\otimes \mathbf{U})^{\wedge})$.
\end{cor}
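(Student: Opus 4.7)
The plan is to apply the bar involution to the identity of Proposition \ref{prop:quasiR}, combine the original and bar-conjugated identities to show that $\Theta\overline{\Theta}$ commutes with $\Delta(u)$ for every $u\in\mathbf{U}$, and then deduce the corollary from a weight-by-weight uniqueness argument.

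Applying $\bar{\,\cdot\,}\otimes\bar{\,\cdot\,}$ to $\Delta(u)\Theta=\Theta\overline{\Delta}(u)$ and using the two elementary identities $\overline{\Delta(u)}=\overline{\Delta}(\bar u)$ and $\overline{\overline{\Delta}(u)}=\Delta(\bar u)$ (both immediate from the definition $\overline{\Delta}(x)=\overline{\Delta(\bar x)}$), after renaming $\bar u$ back to $u$ I obtain the companion identity
\[
\overline{\Delta}(u)\,\overline{\Theta}\;=\;\overline{\Theta}\,\Delta(u),\qquad \forall\,u\in\mathbf{U}.
\]
Combined with Proposition \ref{prop:quasiR} this yields
\[
\Delta(u)\bigl(\Theta\overline{\Theta}\bigr)
\;=\;\Theta\,\overline{\Delta}(u)\,\overline{\Theta}
\;=\;\bigl(\Theta\overline{\Theta}\bigr)\,\Delta(u),
\]
so $\Theta\overline{\Theta}$ centralizes $\Delta(\mathbf{U})$ inside $(\mathbf{U}\otimes\mathbf{U})^{\wedge}$. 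Since the bar involution preserves weight and $\Theta_0=\overline{\Theta}_0=1\otimes 1$, I can write $\Theta\overline{\Theta}=1\otimes 1+\sum_{\mu>0}\Phi_\mu$ with $\Phi_\mu\in\mathbf{U}^-_{-\mu}\otimes\mathbf{U}^+_\mu$.

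It then remains to show that centralizing $\Delta(\mathbf{U})$, together with this weight-zero initial condition, forces every $\Phi_\mu$ to vanish. I would induct on $\mathrm{tr}(\mu)$: supposing $\Phi_\nu=0$ for $0<\mathrm{tr}(\nu)<N$, extract the appropriate weight component of $\Delta(E_i)(\Theta\overline{\Theta})=(\Theta\overline{\Theta})\Delta(E_i)$ at $\mathrm{tr}(\mu)=N$, use Lemma \ref{lem:deltar-}(iii) to commute $E_i$ past elements of $\mathbf{U}^-$ (producing $K_i$- and $K'_i$-terms), use relation $(R2)$ to normalize the positions of $K_i,K'_i$, and then invoke the nondegeneracy of the skew-Hopf pairing (Lemma \ref{lem:hopfpair}) to separate coefficients and conclude $\Phi_\mu=0$. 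The equality $\overline{\Theta}\Theta=1\otimes 1$ then follows immediately by applying $\bar{\,\cdot\,}\otimes\bar{\,\cdot\,}$ to $\Theta\overline{\Theta}=1\otimes 1$.

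The main obstacle is the bookkeeping in the inductive step: the terms $(E_i\otimes 1)\Phi_\mu$ and $\Phi_\mu(E_i\otimes 1)$ have first tensor factors in $E_i\mathbf{U}^-_{-\mu}$ rather than $\mathbf{U}^-$, so one must project onto the triangular decomposition $\mathbf{U}=\mathbf{U}^-\mathbf{U}^0\mathbf{U}^+$ (implicit in the definition of the completion) before reading off coefficients. If this direct route becomes unwieldy, an alternative is to prove abstractly that any $Z\in(\mathbf{U}\otimes\mathbf{U})^{\wedge}$ of the form $1\otimes 1$ plus strictly positive-weight terms in $\bigoplus_{\mu>0}\mathbf{U}^-_{-\mu}\otimes\mathbf{U}^+_\mu$ that centralizes $\Delta(\mathbf{U})$ must equal $1\otimes 1$, for example by analyzing its action on tensor products of sufficiently generic Verma modules where $\Delta(\mathbf{U})$-endomorphisms act as scalars on each weight space.
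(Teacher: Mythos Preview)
Your proposal is correct and is precisely the argument the paper has in mind: the paper's proof simply cites Lusztig's Corollary~4.1.3, whose proof proceeds exactly as you describe---bar-conjugate Proposition~\ref{prop:quasiR} to get $\overline{\Delta}(u)\overline{\Theta}=\overline{\Theta}\Delta(u)$, deduce that $\Theta\overline{\Theta}$ centralizes $\Delta(\mathbf{U})$, and then run the weight-by-weight induction using the commutator formula (here Lemma~\ref{lem:deltar-}(iii)) together with the fact that ${}_ir^-(x)=0$ for all $i$ forces $x=0$. Your remarks about projecting onto the triangular decomposition to read off the $K_i$- and $K'_i$-coefficients separately are exactly the bookkeeping Lusztig does, so there is no genuine obstacle.
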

The proof is similar to those for Corollary 4.1.3 in \cite{Lusztigbook}.

We define $(,)_{\bar{\phi}}:\mathbf{U}^+ \times \mathbf{U}^-\rightarrow \mathbb{Q}(v,t)$ by
\begin{equation}\label{eq:barhopfpair}
(x,y)_{\bar{\phi}}=\overline{(\bar{x}, \bar{y})_{\phi}},
\quad \forall x\in \mathbf{U}^+ ,y\in\mathbf{U}^-.
\end{equation}
It satisfies that $(1, 1)_{\bar{\phi}}=1$ and
$(E_i, F_j)_{\bar{\phi}}=\delta_{ij}(v_i-v^{-1}_i)^{-1}$.

\begin{lem} \label{lem:barhopfpair}
$(x^+,y^-)_{\bar{\phi}}=(-1)^{{\rm tr} |x|}v^{-|x|\cdot |y|/2}v_{-|x|}(x^+,\sigma^-(y^-))_{\phi},\ \forall x, y \in \mathfrak{f}$.
\end{lem}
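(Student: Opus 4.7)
Since both $(\cdot,\cdot)_\phi$ and $(\cdot,\cdot)_{\bar\phi}$ are $\mathbb{Q}(v,t)$-bilinear and respect the grading (both sides vanish unless $|x|=|y|$), it suffices to verify the identity on monomials $x, y$ in the generators $\theta_i$ with $|x|=|y|$. For such monomials, $\bar x = x$ and $\bar y = y$, so the defining formula reduces to $(x^+, y^-)_{\bar\phi} = \overline{(x^+, y^-)_\phi}$, and the claim becomes
\[
\overline{(x^+, y^-)_\phi} \;=\; (-1)^{\mathrm{tr}|x|}\, v^{-|x|\cdot|y|/2}\, v_{-|x|}\, (x^+, \sigma^-(y^-))_\phi.
\]
The plan is to establish this by induction on $\mathrm{tr}|y|$, with the trivial base case $\mathrm{tr}|y|=0$.

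For the inductive step, I write $y = \theta_j y'$, so that $y^- = F_j (y')^-$, and apply Lemma~\ref{lem:phiprop}(i) to obtain the recursion
$(x^+, y^-)_\phi = (v_j^{-1} - v_j)^{-1}\,({}_jr^+(x^+), (y')^-)_\phi$.
Taking the bar involution produces the scalar $\overline{(v_j^{-1}-v_j)^{-1}} = -(v_j^{-1}-v_j)^{-1}$, contributing one $-1$ per inductive step; iterated through the $\mathrm{tr}|y|=\mathrm{tr}|x|$ steps this assembles into the sign $(-1)^{\mathrm{tr}|x|}$.

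For the right-hand side, the twisted anti-involution property~\eqref{eq:sigma-f} gives $\sigma(\theta_j y') = t^{\langle|y'|,j\rangle - \langle j,|y'|\rangle}\sigma(y')\theta_j$, hence $\sigma^-(y^-) = t^{\langle|y'|,j\rangle - \langle j,|y'|\rangle}\sigma(y')^- F_j$. Lemma~\ref{lem:phiprop}(ii) then rewrites $(x^+, \sigma^-(y^-))_\phi$ in terms of $(r_j^+(x^+), \sigma(y')^-)_\phi$ up to that explicit $t$-twist, putting the right-hand side in the form to which the inductive hypothesis applied to the pair $(r_j^+(x^+), y')$ can be invoked.

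The main obstacle is reconciling these two recursions scalar-by-scalar. The LHS descends through the left derivation ${}_jr$, while the RHS descends through the right derivation $r_j$; the bridge is Lemma~\ref{lem:sigma}, asserting $r\circ\sigma = (\sigma\otimes\sigma)\circ{}^tr$, which exchanges left and right derivations up to a precise $t$-twist. Combined with the coefficient $v^{-|x_1|\cdot|x_2|}$ that appears when passing through Lemma~\ref{lem:rbar}, the $v$-exponents should telescope into $v^{-|x|\cdot|y|/2}v_{-|x|}$ and the $t$-exponents should cancel between the two sides. Verifying that all of these accumulated scalars match precisely is the technical heart of the argument, and I expect this careful matching of exponents to constitute the main difficulty of the proof.
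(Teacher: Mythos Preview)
Your overall strategy---reduce to monomials, peel one $\theta_j$ off $y$ at a time, and use the derivations ${}_jr,r_j$ via Lemma~\ref{lem:phiprop}---is viable and is a one-letter-at-a-time variant of the paper's argument. The paper instead splits $y=y_1y_2$ arbitrarily and expands both sides using the full coproduct $\Delta(x^+)$ and Lemma~\ref{lem:rbar}; both sides then become sums over the components $x_1,x_2$ of $r(x)$, paired against $\sigma^-(y_2^-)$ and $\sigma^-(y_1^-)$ respectively, and the comparison is direct. That approach sidesteps entirely the ${}_jr$/$r_j$ mismatch that you identify as the main obstacle.

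There is, however, a genuine gap in your plan. You say the bridge between ${}_jr$ on the left and $r_j$ on the right is Lemma~\ref{lem:sigma}, but that lemma gives $r_j(\sigma(x))=t^{?}\sigma({}_jr(x))$, relating the derivations only through $\sigma$ applied to $x$; since $\sigma$ never acts on $x$ in your setup, this gets you nowhere. The actual mechanism is the bar involution: after you apply Lemma~\ref{lem:phiprop}(i) and take the bar, you are left with $\overline{({}_jr^+(x^+),(y')^-)_\phi}$, and because ${}_jr(x)$ is \emph{not} a monomial, applying the inductive hypothesis to its monomial pieces conjugates its coefficients, producing $\big((\overline{{}_jr(x)})^+,\sigma^-((y')^-)\big)_\phi$ rather than $\big({}_jr^+(x^+),\sigma^-((y')^-)\big)_\phi$. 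The identity you need is then
\[
\overline{{}_jr(x)} \;=\; v^{-j\cdot(|x|-j)}\,t^{\langle j,|x|-j\rangle-\langle|x|-j,j\rangle}\,r_j(x)
\]
for monomial $x$, which is read off from Lemma~\ref{lem:rbar} by comparing the $\mathfrak{f}_j\otimes\mathfrak{f}_{|x|-j}$ component of $\bar r(x)=\overline{r(x)}$. Once you use this in place of Lemma~\ref{lem:sigma}, the $v$- and $t$-exponents do telescope and your induction closes. So your outline is salvageable, but as written it would stall at exactly the step you flag as the heart of the argument.
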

\begin{proof}
It is straightforward to check it when
$x=\theta_i$ and $y=\theta_j$ for some $i,j\in I$.

Let  $x\in \mathfrak{f}$ and  $r(x)=\sum x_1 \otimes x_2$ with $x_1, x_2$ homogeneous.
Assume that the lemma holds for $y_1$ and $y_2$. We shall show that it holds for $y=y_1y_2$.

By Lemma \ref{lem:rbar}, we have
$\Delta(\bar{x^+})=\sum v^{|x_1|\cdot|x_2|}t^{\langle |x_2|, |x_1|\rangle-\langle |x_1|, |x_2|\rangle}
\bar{x_2^+}K_{|x_1|}\otimes \bar{x_1^+}$. Then,
\begin{equation*}
\begin{split}
(\bar{x^+},\bar{y_1^-}\bar{y_2^-})_{\phi}=&(\Delta(\bar{x^+}), \bar{y_1^-}\otimes \bar{y_2^-})_{\phi}\\
=&\sum v^{|x_1|\cdot|x_2|}t^{\langle |x_2|, |x_1|\rangle-\langle |x_1|, |x_2|\rangle}
(\bar{x_2^+}K_{|x_1|}, \bar{y_1^-})_{\phi}(\bar{x_1^+}, \bar{y_2^-})_{\phi}\\
=&\sum v^{|x_1|\cdot|x_2|}t^{\langle |x_2|, |x_1|\rangle-\langle |x_1|, |x_2|\rangle}
(\bar{x_2^+}, \bar{y_1^-})_{\phi}(\bar{x_1^+}, \bar{y_2^-})_{\phi}.
\end{split}
\end{equation*}
By \eqref{eq:barhopfpair}, we have
\begin{equation}\label{eq:barphil}
\begin{split}
(x^+,y_1^-y_2^-)_{\bar{\phi}}
=&\sum v^{-|x_1|\cdot|x_2|}t^{\langle |x_2|, |x_1|\rangle-\langle |x_1|, |x_2|\rangle}
(x_2^+, y_1^-)_{\bar{\phi}}(x_1^+, y_2^-)_{\bar{\phi}}\\
=&\sum (-1)^{\mathrm{tr}(|x_1|+|x_2|)}v^{-(|x_1|\cdot|x_2|+(|x_1|\cdot|y_2|+|x_2|\cdot|y_1|)/2)}v_{-(|x_1|+|x_2|)}\\
&\quad\  t^{\langle |x_2|, |x_1|\rangle-\langle |x_1|, |x_2|\rangle}
(x_1^+,\sigma^-(y_2^-))_{\phi}(x_2^+,\sigma^-(y_1^-))_{\phi}.
\end{split}
\end{equation}
On the other hand,
\begin{equation}\label{eq:barphir}
\begin{split}
(x^+,\sigma^-(y_1^-y_2^-))_{\phi}=&t^{\langle |y_1|, |y_2|\rangle-\langle |y_2|, |y_1|\rangle}
(\Delta(x^+),\sigma^-(y_2^-)\sigma^-(y_1^-))_{\phi}\\
=&t^{\langle |y_1|, |y_2|\rangle-\langle |y_2|, |y_1|\rangle}
\sum (x^+_1K_{|x_2|}\otimes x^+_2,\sigma^-(y_2^-)\otimes \sigma^-(y_1^-))_{\phi}\\
=&t^{\langle |y_1|, |y_2|\rangle-\langle |y_2|, |y_1|\rangle}
\sum (x^+_1,\sigma^-(y_2^-))_{\phi}\sum (x^+_2,\sigma^-(y_1^-))_{\phi}.
\end{split}
\end{equation}
This lemma follows from \eqref{eq:barphil} and \eqref{eq:barphir} with $|x_1|=|y_2|$ and $|x_2|=|y_1|$.
\end{proof}

While $\bar\Theta$ can be evaluated easily, it will be more convenient
to have the following alternate description of $\bar\Theta$
using the property of $(,)_{\bar{\phi}}$.

\begin{lem}\label{lem:barquasiR}
With the same notations as in Proposition \ref{prop:quasiR},
$\bar\Theta=\sum_\nu \bar\Theta_\nu$ is given by
\begin{equation*}
\bar \Theta_\nu=(-1)^{\mathrm{tr}\nu}v^{\frac{\nu\cdot\nu}2}v_{-\nu}
\sum_{b\in \mathbf{B}_\nu} b\otimes \sigma^+(b^*)\in \mathbf U_{-\nu}^-\otimes \mathbf U_\nu^+.
\end{equation*}
\end{lem}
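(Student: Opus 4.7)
The plan is to verify the identity by pairing both sides with a separating family of test elements and then to invoke the non-degeneracy of the skew-Hopf pairing. For $u \in \mathbf{U}^+_\nu$ and $w \in \mathbf{U}^-_{-\nu}$, I would introduce the linear functional $\Phi_{u,w} \colon \mathbf{U}^-_{-\nu} \otimes \mathbf{U}^+_\nu \to \mathbb{Q}(v,t)$ given by $\Phi_{u,w}(y \otimes x) = (u, y)_\phi (x, w)_\phi$. Because $(,)_\phi$ restricts to a perfect pairing on $\mathbf{U}^+_\nu \times \mathbf{U}^-_{-\nu}$, the family $\{\Phi_{u,w}\}$ separates the points of $\mathbf{U}^-_{-\nu} \otimes \mathbf{U}^+_\nu$, so it is enough to show that the scalar $\Phi_{u,w}(\bar\Theta_\nu)$ agrees with $\Phi_{u,w}$ applied to the proposed right-hand side, for every choice of $u$ and $w$.

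The right-hand side is easy to evaluate: the expansion $u = \sum_b (u,b)_\phi\, b^*$ together with the $\mathbb{Q}(v,t)$-linearity of $\sigma^+$ gives $\sum_b (u,b)_\phi\, \sigma^+(b^*) = \sigma^+(u)$, so $\Phi_{u,w}$ applied to $(-1)^{\mathrm{tr}\nu} v^{\nu\cdot\nu/2} v_{-\nu} \sum_b b \otimes \sigma^+(b^*)$ equals $(-1)^{\mathrm{tr}\nu} v^{\nu\cdot\nu/2} v_{-\nu} (\sigma^+(u), w)_\phi$, which Lemma~\ref{lem:hopfpairprop} rewrites as $(-1)^{\mathrm{tr}\nu} v^{\nu\cdot\nu/2} v_{-\nu} (u, \sigma^-(w))_\phi$. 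For $\Phi_{u,w}(\bar\Theta_\nu) = (X, w)_\phi$ with $X := \sum_b (u, \bar b)_\phi\, \overline{b^*}$, the strategy is to compute $\bar X$ first and then bar back. Using $\overline{(u, \bar b)_\phi} = (\bar u, b)_{\bar\phi}$ (from the definition of $(,)_{\bar\phi}$) one gets $\bar X = \sum_b (\bar u, b)_{\bar\phi}\, b^*$. Applying Lemma~\ref{lem:barhopfpair} converts each $(\bar u, b)_{\bar\phi}$ into a scalar multiple of $(\bar u, \sigma^-(b))_\phi$; by Lemma~\ref{lem:hopfpairprop}, $\{\sigma^-(b)\}$ is the basis of $\mathbf{U}^-_{-\nu}$ dual to $\{\sigma^+(b^*)\}$ under $(,)_\phi$, and the expansion $\bar u = \sum_b (\bar u, \sigma^-(b))_\phi \sigma^+(b^*)$ then yields $\sum_b (\bar u, \sigma^-(b))_\phi\, b^* = \sigma^+(\bar u)$. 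Applying bar once more and using that the bar involution commutes with $\sigma^+$ on $\mathbf{U}^+$ produces the required formula for $X$, after which a last call to Lemma~\ref{lem:hopfpairprop} matches the two scalars.

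The main technical obstacle will be the careful bookkeeping: one has to track the interplay of the bar involution, the twisted anti-involution $\sigma^\pm$, and the two pairings $(,)_\phi$, $(,)_{\bar\phi}$, so that the prefactors $(-1)^{\mathrm{tr}\nu}$, $v^{\pm\nu\cdot\nu/2}$, and $v_{\pm\nu}$ combine exactly to $(-1)^{\mathrm{tr}\nu} v^{\nu\cdot\nu/2} v_{-\nu}$ on both sides. The auxiliary identity that the bar involution commutes with $\sigma^\pm$ on $\mathbf{U}^\pm$, which is used in the final step of computing $X$, should first be checked by a short induction on the degree in $\mathfrak{f}$, using $\bar t = t$ together with the fact that both involutions fix each generator.
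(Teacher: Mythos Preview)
Your approach is correct and rests on the same two ingredients as the paper's proof (Lemma~\ref{lem:barhopfpair} relating $(,)_{\bar\phi}$ to $(,)_\phi\circ(\mathrm{id}\times\sigma^-)$, and Lemma~\ref{lem:hopfpairprop} identifying $\sigma^-(b)^*$ with $\sigma^+(b^*)$), but the packaging is different. The paper avoids test functionals altogether by first rewriting $\Theta_\nu$ in the barred basis $\bar{\mathbf B}=\{\bar b\}$: since $\Theta$ is basis-independent one has $\Theta_\nu=\sum_b \bar b\otimes(\bar b)^*$, so that applying the bar involution gives $\bar\Theta_\nu=\sum_b b\otimes \overline{(\bar b)^*}$ with the first tensor leg already equal to $b$. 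The whole problem then reduces to identifying the single element $\overline{(\bar b)^*}$, which is done by barring the duality relation $((\bar b)^*,\bar b')_\phi=\delta_{b,b'}$ to land in $(,)_{\bar\phi}$ and then invoking the two lemmas above. This basis-change trick is slightly slicker: it sidesteps both the separating-family argument and your auxiliary claim that bar commutes with $\sigma^+$ on $\mathbf U^+$ (which, as you note, is an easy induction but is not recorded in the paper). Conversely, your method is more systematic and would adapt readily to situations where no convenient ``pre-barred'' basis is available.

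One bookkeeping remark: if you actually carry out your computation (and equally the paper's, cf.\ \eqref{eq:barbarb}), the scalar that emerges is $(-1)^{\mathrm{tr}\nu}v^{\nu\cdot\nu/2}v_{\nu}$, not $v_{-\nu}$ as printed in the statement; the discrepancy is a typo in the lemma and not a flaw in your argument.
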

\begin{proof}
Since $\Theta$ is independent of the choice of basis, we let
\begin{equation*}
\Theta_\nu=\sum_{b\in \mathbf{B}_\nu} \bar{b}\otimes \bar{b}^*,\quad \forall \nu \in \mathbb{N}[I],
\end{equation*}
where $\bar{b} \in \bar{\mathbf{B}}=\{\bar{b}|b\in \mathbf{B} \}$.
Then
\begin{equation}\label{eq:barthetanu}
\bar\Theta_\nu=\sum_{b\in \mathbf{B}_\nu} \overline{\bar{b}}\otimes \overline{(\bar{b}^*)},
\quad \forall \nu \in \mathbb{N}[I].
\end{equation}
Note that $\overline{\bar{b}}=b$.

We show the relation between $\overline{\bar{b}^*}$ and $\sigma^+(b^*)$.
There exists an element $\bar{b'}\in \bar{\mathbf{B}_{\nu}}$ such that
$(\bar{b}^*,\bar{b'})_{\phi}=\delta_{b,b'}$. By \eqref{eq:barhopfpair} and Lemma \ref{lem:barhopfpair}, we have
\begin{equation*}
\delta_{b,b'}=\overline{(\bar{b}^*, \bar{b'})_{\phi}}=(\bar{\bar{b}^*},b')_{\bar{\phi}}
=(-1)^{\rm{tr} \nu}v^{-\nu\cdot \nu/2}v_{-\nu}(\bar{\bar{b}^*},\sigma^-(b'))_{\phi}.
\end{equation*}
Therefore, we have
\begin{equation}\label{eq:barbarb}
\bar{\bar{b}^*}=(-1)^{\rm{tr} \nu}v^{\nu\cdot \nu/2}v_{\nu}\sigma^-(b)^*.
\end{equation}
By Lemma \ref{lem:hopfpairprop}, we have
\begin{equation}\label{eq:sigmab*}
\sigma^-(b)^*=\sigma^+(b^*).
\end{equation}
The Lemma follows from \eqref{eq:barthetanu}, \eqref{eq:barbarb} and \eqref{eq:sigmab*}.
\end{proof}

\section{The $\mathcal{R}$-matrix for two-parameter quantum algebras}\label{sec:rmatrix}
\subsection{The module of $U_{v,t}$}
A $U_{v,t}$-module $M$ is called a $weight\ module$ if it admits a decomposition $M=\bigoplus_{\lambda\in \mathbb{N}[I]}M_{\lambda}$ of
vector spaces such that
\begin{equation*}
M_{\lambda}=\{m\in M|K_i\cdot m=v^{i\cdot \lambda}c_{i,\lambda}m,\ K'_i\cdot m=v^{-i\cdot \lambda}c_{i,\lambda}m,\ \forall i\in I\},
\end{equation*}
where
\begin{equation*}
c_{i,\lambda}=t^{\langle\lambda,i\rangle-\langle i,\lambda\rangle}.
\end{equation*}
For any $m\in M_{\lambda}$, we denote by $|m|=\lambda$.

For all $m\in \mathbb{Q}(v,t),$ we define
$$u\cdot m=\varepsilon(u)m,\quad \forall u\in U_{v,t},$$
 where $\varepsilon$ is the counit of $U_{v,t}$.
 Then, $\mathbb{Q}(v,t)$ is a trivial module of $U_{v,t}$.

Let $M$ be $U_{v,t}$-module and $M^{*}=Hom_{\mathbb{Q}(v,t)}(M,\mathbb{Q}(v,t))$.
We define $u\cdot n^* \in M^*$ by
\begin{equation}\label{eq:uaction}
u\cdot n^*(m)=n^*(S(u)\cdot m), \quad \forall u \in U_{v,t}, n^* \in M^{*}, m\in M,
\end{equation}
then $M^{*}$ is also a $U_{v,t}$-module.

 For any $U_{v,t}$-modules $M$ and $N$, we can construct the $U_{v,t}$-module
$M\otimes N=M\otimes_{\mathbb{Q}(v,t)} N$ via the coproduct.
In particular, we have $U_{v,t}$-modules
$M^*\otimes M$ and $M\otimes M^*$.

For any $i\in I, \lambda \in \mathbb{N}[I]$, we denote by
\begin{equation*}
v_{-\lambda}=v^{-1}_{\lambda},\quad {\rm and}\quad c_{i,-\lambda}=c^{-1}_{i,\lambda}.
 \end{equation*}
\begin{lem}\label{lem:fourmaps}
Fix a $U_{v,t}$-module M.

$(1)$\quad Let $\mathrm{ev}: M^*\otimes M \rightarrow \mathbb{Q}(v,t)$
be the $\mathbb{Q}(v,t)$-linear map defined by
$$m^*\otimes n\mapsto m^*(n),\quad \forall m^*\in M^*,n \in M.$$
Then $\mathrm{ev}$ is a $U_{v,t}$-module epimorphism.

$(2)$\quad Let $\mathrm{qtr}: M\otimes M^*\rightarrow \mathbb{Q}(v,t)$
be the $\mathbb{Q}(v,t)$-linear map defined by
$$m\otimes n^*\mapsto\ v_{-|m|}^2\ n^*(m),\quad \forall m\in M,n^* \in M^*.$$
Then $\mathrm{qtr}$ is a $U_{v,t}$-module epimorphism.

$(3)$\quad Let $\mathrm{coev}: \mathbb{Q}(v,t) \rightarrow M^*\otimes M$
be the $\mathbb{Q}(v,t)$-linear map defined by
$$1\mapsto \sum_{w\in B}\ v_{|w|}^2\ w^*\otimes w$$
for some homogeneous $\mathbb{Q}(v,t)$-basis B of M.
Then $\mathrm{coev}$ is a $U_{v,t}$-module monomorphism.

$(4)$\quad Let $\mathrm{coqtr}: \mathbb{Q}(v,t) \rightarrow M\otimes M^*$
be the $\mathbb{Q}(v,t)$-linear map defined by
$$1\mapsto \sum_{w\in B}w\otimes w^*$$
for some homogeneous $\mathbb{Q}(v,t)$-basis B of M.
Then $\mathrm{coqtr}$ is a $U_{v,t}$-module monomorphism.
\end{lem}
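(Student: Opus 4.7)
The plan is to verify for each of the four maps (a) well-definedness (the basis-independence for $\mathrm{coev}$ and $\mathrm{coqtr}$), (b) compatibility with the action of the Hopf algebra generators $K_i^{\pm 1}, K_i'^{\pm 1}, E_i, F_i$, and (c) surjectivity or injectivity as asserted. The key structural input is the coproduct/antipode data of Section \ref{sec:presentation} together with the induced action \eqref{eq:uaction} on $M^*$. For $\mathrm{coev}$ and $\mathrm{coqtr}$, basis-independence follows from a weight-space argument: on each $M_\lambda$ the element $\sum_{w\in B\cap M_\lambda} w\otimes w^*$ corresponds to $\mathrm{id}_{M_\lambda}$ under the canonical isomorphism $M_\lambda\otimes M_\lambda^*\cong \mathrm{End}(M_\lambda)$, and the weighted variant $\sum v_{|w|}^2 w^*\otimes w$ is independent of the choice of homogeneous basis for the same reason. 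The epi/mono claims are immediate from the explicit formulas once (b) is established.

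For parts (1) and (4) the verification is the standard one for Hopf algebras. For $\mathrm{ev}$, applying $\Delta$ to $u\cdot(m^*\otimes n)$ and using the antipode axiom $\sum S(u_{(1)})u_{(2)}=\varepsilon(u)$ directly yields $\mathrm{ev}(u\cdot x)=\varepsilon(u)\,\mathrm{ev}(x)$; on the generator $E_i$ the two contributions coming from $\Delta(E_i)=E_i\otimes 1+K_i\otimes E_i$ and $S(E_i)=-K_i^{-1}E_i$ cancel directly, and similarly for $F_i$ via $S(F_i)=-F_iK_i'^{-1}$, while the $K_i^{\pm 1}, K_i'^{\pm 1}$ cases reduce to $K_iK_i^{-1}=1$. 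The argument for $\mathrm{coqtr}$ is completely dual, using the coevaluation side of the antipode axiom.

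The substantive content is in parts (2) and (3), where the twist factors $v_{-|m|}^2$ and $v_{|w|}^2$ must be calibrated precisely; this is also the main obstacle, as it requires meticulous tracking of $v$- and $t$-exponents through the coproduct. For $\mathrm{qtr}$ applied to $E_i\cdot(m\otimes n^*)$, the $\Delta(E_i)=E_i\otimes 1+K_i\otimes E_i$ contribution gives $v_{-|m|-i}^2\,n^*(E_i\cdot m)$ from the first summand and, using $(E_i\cdot n^*)(K_im)=-v^{-i\cdot i}\,c_{i,|m|}c_{i,|m|+i}^{-1}\,n^*(E_i\cdot m)$ via \eqref{eq:uaction}, a matching multiple from the second; after collecting $v_{-|m|-i}^2/v_{-|m|}^2=v_i^{-2}=v^{-i\cdot i}$, the net coefficient is
\[
v_{-|m|}^2\,v^{-i\cdot i}\bigl(1-c_{i,|m|}c_{i,|m|+i}^{-1}\bigr)\,n^*(E_i\cdot m)=v_{-|m|}^2\,v^{-i\cdot i}(1-c_{i,i}^{-1})\,n^*(E_i\cdot m),
\]
which vanishes because $c_{i,i}=t^{\langle i,i\rangle-\langle i,i\rangle}=1$. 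The $F_i$-case collapses in exactly the same manner through $S(F_i)=-F_iK_i'^{-1}$. Part (3) is the mirror computation: one expands $E_i\cdot \mathrm{coev}(1)=\sum_w v_{|w|}^2\bigl[(E_iw^*)\otimes w+(K_iw^*)\otimes(E_iw)\bigr]$, rewrites $E_iw^*$ via $S(E_i)=-K_i^{-1}E_i$, and after the change-of-index $|v|=|w|-i$ the two sums have identical coefficients thanks again to $c_{i,i}=1$. Thus every asymmetric contribution reduces to this single identity, and the lemma follows.
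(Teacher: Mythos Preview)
Your proposal is correct and follows essentially the same strategy as the paper: a direct check on the generators $E_i,F_i,K_i^{\pm1},K_i'^{\pm1}$, with the nontrivial work concentrated in parts (2) and (3). The paper carries out exactly your computation for $\mathrm{qtr}(\Delta(E_i)\,m\otimes n^*)$ and for $\Delta(E_i)\cdot\mathrm{coev}(1)$, obtaining the same cancellation; the only cosmetic difference is that the paper uses the commutation relation $K_i^{-1}E_iK_i=v^{-i\cdot i}E_i$ from (R2) directly, whereas you unpack the weight action into $c_{i,\lambda}$-factors and then invoke $c_{i,i}=1$---these are the same identity viewed two ways. Your explicit remark on basis-independence of $\mathrm{coev}$ and $\mathrm{coqtr}$ is a point the paper leaves implicit.
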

\begin{proof}
The surjectivity and injectivity of the maps are clear.
We shall show that these maps preserve the action of generators of $U_{v,t}$.
It is straightforward to verify these maps holds for $K_i$ and $K'_i$.
It's enough to check that these maps preserve the action of $E_i$ and $F_i$ for all $i\in I$.
Show that
\begin{equation}
   {\rm ev}(\Delta(E_i)m^*\otimes n)={\rm ev}(\Delta(F_i)m^*\otimes n)=0,\quad  \forall m^*\in M^* ,n\in M,
\end{equation}
\begin{equation}\label{eq:qtrdelta}
{\rm qtr}(\Delta(E_i)m\otimes n^*)={\rm qtr}(\Delta(F_i)m\otimes n^*)=0,\quad  \forall m\in M, n^*\in M^*,
\end{equation}
\begin{equation}\label{eq:deltacoev}
\Delta(E_i)\sum_{w\in B} v_{|w|}^2 w^*\otimes w
=\Delta(F_i)\sum_{w\in B} v_{|w|}^2 w^*\otimes w=0,
\end{equation}
\begin{equation}
\Delta(E_i)\sum_{w\in B} w\otimes w^*=
\Delta(F_i)\sum_{w\in B} w\otimes w^*=0.
\end{equation}

We shall prove \eqref{eq:qtrdelta} and \eqref{eq:deltacoev} for the action of $E_i$,
 the remaining cases can be shown similarly.

First, we show ${\rm qtr}(\Delta(E_i)m\otimes n^*)=0$.
For homogenous elements $m\in M, n^*\in M^*$, we have
\begin{align*}
{\rm qtr}(\Delta(E_i)m\otimes n^*)&=
{\rm qtr}(E_i\cdot m\otimes n^*+K_i\cdot m\otimes E_i\cdot n^*)\\
&=v^2_{-|m|-i}n^*(E_i\cdot m)+v^2_{-|m|}(E_i\cdot n^*)(K_i\cdot m)\\
&=v^2_{-|m|-i}n^*(E_i\cdot m)+v^2_{-|m|}n^*(-K_i^{-1}E_iK_i\cdot m)\\
&=v^2_{-|m|-i}n^*(E_i\cdot m)-v^2_{-|m|-i}n^*(E_i\cdot m)=0.
\end{align*}

Next, we show that $\Delta(E_i)\sum_{w\in B} v_{|w|}^2 w^*\otimes w=0$.
Observe that $x=\sum m^*\otimes n=0$ if and only if
$x(m'):=\sum m^*(m')n=0$ for all $m'\in M$.
Let $x=\Delta(E_i)\sum_{w\in B} v_{|w|}^2 w^*\otimes w$. Then we have
\begin{equation*}
x(m)=\sum_{w\in B} v_{|w|}^2 (E_i\cdot w^*(m) w+(K_i\cdot w^*)(m) E_i\cdot w),\  \forall m\in M.
\end{equation*}
For any $w_0\in B$, we have
\begin{align*}
x(w_0)=&\sum_{w\in B} v_{|w|}^2
(w^*(-K_i^{-1}E_i\cdot w_0) w+w^*(K_i^{-1}\cdot w_0) E_i\cdot w)\\
=&-\sum_{w\in B}v_{|w|}^2v^{-i\cdot(|w_0|+i)}c_{i,|w_0|+i}^{-1}w^*(E_i\cdot w_0) w\\
&+\sum_{w\in B}v_{|w|}^2v^{-i\cdot|w_0|}c_{i,|w_0|}^{-1}w^*(w_0) E_i\cdot w\\
=&-v_{|w_0|+i}^2v^{-i\cdot(|w_0|+i)}c_{i,|w_0|}^{-1}E_i\cdot w_0
+v_{|w_0|}^2v^{-i\cdot |w_0|}c_{i,|w_0|}^{-1} E_i\cdot w_0\\
=&0.
\end{align*}
Hence, $x=0$.

This finishes the proof.
\end{proof}
\subsection{The $\mathcal R$-matrix of $U_{v,t}$-module}\label{subsection:Rmatrix}
We shall construct a $U_{v,t}$-module isomorphism $\mathcal{R}_{M,M'}:M\otimes M'\rightarrow M'\otimes M$
for any finite dimensional weight modules $M$ and $M'$,
by the method used by Jantzen \cite[Chap. 7]{Ja} for the quantum algebras $U_q(\mathfrak{g})$.

The map $\mathcal{R}_{M,M'}$ is the composite of three linear transformations $P, \tilde{f}, \Theta$ defined as follows.

Let $P:M\otimes M'\rightarrow M'\otimes M$
be the $\mathbb{Q}(v,t)$-linear bijection defined by
$$P(m\otimes m')=m'\otimes m,\quad \forall m\in M, m'\in M'.$$

Recall that $(,)_{\phi}$ is a skew-Hopf pairing defined in Lemma \ref{lem:hopfpair}.
For any $\lambda, \mu\in \mathbb{Z}[I]$, we define the map
$f:\mathbb{Z}[I] \times \mathbb{Z}[I] \rightarrow \mathbb{Q}(v,t)^\times$ by
\begin{equation}\label{eq:fdef}
f(\lambda, \mu)=(K_{\lambda},K'_{\mu})^{-1}_{\phi}.
\end{equation}
Then we have
\begin{equation}\label{eq:fprop}
\begin{split}
f(\lambda+\mu,\nu)&=f(\lambda,\nu)f(\mu,\nu)\\
f(\lambda,\mu+\nu)&=f(\lambda,\mu)f(\lambda,\nu)\\
f(\lambda,\mu)&=f(-\lambda,-\mu)\\
f(i,\mu)&=v^{-i\cdot \mu}c_{\mu,i}\\
f(\lambda,i)&=v^{-i\cdot \lambda}c_{i,\lambda},
\end{split}
\end{equation}
where $\nu$ is also in $\mathbb{Z}[I]$.

We define a bijective linear map
$\tilde{f}:M\otimes M' \rightarrow M\otimes M'$ by
\begin{equation}\label{eq:tildef}
\tilde{f}(m\otimes m')=f(\lambda,\mu)m\otimes m',
\quad \forall m\in M_{\lambda}, m'\in M'_{\mu}, \lambda, \mu \in \mathbb{Z}[I].
\end{equation}

Recall the definition of $\Theta$ from Proposition \ref{prop:quasiR}.
The linear transformation $\Theta=\Theta_{M,M'}:M\otimes M'\rightarrow M\otimes M'$ is well-defined.

\begin{prop}\label{prop:Rmat}
For any  finite dimensional weight module $M, M'$ of $U_{v,t}$,
we define $\mathcal{R}=\mathcal{R}_{M,M'}: M\otimes M'\rightarrow M'\otimes M$
by $\mathcal{R}=\Theta\circ \tilde{f} \circ P$.
Then $\mathcal R$ is a $U_{v,t}$-module isomorphism.
\end{prop}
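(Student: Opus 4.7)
The plan is to verify that $\mathcal{R} = \Theta \circ \tilde{f} \circ P$ intertwines the $U_{v,t}$-action on $M \otimes M'$ and $M' \otimes M$, that is, $\Delta(u) \circ \mathcal{R} = \mathcal{R} \circ \Delta(u)$ for every $u \in U_{v,t}$. Bijectivity will then be automatic: $P$ is a swap, $\tilde{f}$ is multiplication by the nonzero scalar $f(\lambda,\mu)$ on each weight summand, and $\Theta$ is invertible with inverse $\overline{\Theta}$ by Corollary \ref{cor:bartheta}. Since $\Delta$, $\Delta^{\mathrm{op}}$, $\overline{\Delta}$ are all algebra homomorphisms and $P,\tilde f,\Theta$ are all linear, it suffices to verify equivariance on the generators $E_i, F_i, K_i^{\pm 1}, K_i'^{\pm 1}$.

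The strategy is to match the three factors of $\mathcal{R}$ against a chain of three intertwiners
\begin{align*}
P \circ \Delta(u) &= \Delta^{\mathrm{op}}(u) \circ P,\\
\tilde{f} \circ \Delta^{\mathrm{op}}(u) &= \overline{\Delta}(u) \circ \tilde{f},\\
\Theta \circ \overline{\Delta}(u) &= \Delta(u) \circ \Theta.
\end{align*}
The first identity is the defining property of the swap $P$ together with the definition of $\Delta^{\mathrm{op}}$. The third identity is precisely Proposition \ref{prop:quasiR}, interpreted on $M'\otimes M$; the formal series $\Theta = \sum_\nu \Theta_\nu$ acts as a finite sum because $M$ and $M'$ are finite dimensional weight modules. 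Composing these three identities immediately yields $\Delta(u)\circ\mathcal{R} = \mathcal{R}\circ\Delta(u)$.

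The substantive work is the middle identity, which I would check on a homogeneous vector $m'\otimes m \in M'_\mu\otimes M_\lambda$, case by case. For $u = K_i^{\pm 1}$ or $K_i'^{\pm 1}$ the three coproducts coincide (all are grouplike), so there is nothing to verify. For $u = E_i$ one has $\Delta^{\mathrm{op}}(E_i) = 1\otimes E_i + E_i\otimes K_i$ and $\overline{\Delta}(E_i) = E_i\otimes 1 + K_i'\otimes E_i$ by \eqref{eq:bardelta}, so comparing
\[
\tilde{f}\!\left(m'\otimes E_i m + E_i m'\otimes K_i m\right)
\quad\text{and}\quad
\overline{\Delta}(E_i)\!\left(f(\mu,\lambda)\,m'\otimes m\right)
\]
coefficient by coefficient reduces to the two scalar identities
\[
f(\mu,\lambda+i) = f(\mu,\lambda)\,v^{-i\cdot\mu}c_{i,\mu},
\qquad
f(\mu+i,\lambda)\,v^{i\cdot\lambda}c_{i,\lambda} = f(\mu,\lambda).
\]
Both follow from the multiplicative rules \eqref{eq:fprop} for $f$ together with the symmetry $c_{\mu,i} = c_{i,\mu}^{-1}$. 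The case $u = F_i$ is entirely analogous, using $\Delta^{\mathrm{op}}(F_i) = F_i\otimes 1 + K_i'\otimes F_i$ and $\overline{\Delta}(F_i) = 1\otimes F_i + F_i\otimes K_i$.

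The main obstacle is purely bookkeeping: the $v$-powers, $t$-powers, and $c_{\cdot,\cdot}$ factors arising from the two-parameter coproduct must align correctly with the definition $f(\lambda,\mu) = (K_\lambda, K_\mu')_\phi^{-1}$. The definition of $f$ via the skew-Hopf pairing is precisely what makes this cancellation work; a naive one-parameter formula would not suffice, which is exactly why the skew-Hopf pairing $(,)_\phi$ and its properties recorded in Lemma \ref{lem:hopfpair} were set up earlier.
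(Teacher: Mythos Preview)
Your proof is correct and follows essentially the same route as the paper's: both reduce to checking that $\tilde f\circ P$ intertwines $\Delta$ with $\overline\Delta$ on generators, invoking Proposition~\ref{prop:quasiR} for the $\Theta$ factor and Corollary~\ref{cor:bartheta} for invertibility. The only cosmetic difference is that you explicitly insert $\Delta^{\mathrm{op}}$ as an intermediate step and verify $\tilde f\circ\Delta^{\mathrm{op}}(u)=\overline\Delta(u)\circ\tilde f$ separately, whereas the paper merges $P$ and $\tilde f$ and directly verifies $\tilde f\circ P\bigl(\Delta(u)\,m\otimes m'\bigr)=f(|m'|,|m|)\,\overline\Delta(u)(m'\otimes m)$; the resulting scalar identities in $f$ and their justification via \eqref{eq:fprop} are identical.
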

\begin{proof}
By Corollary \ref{cor:bartheta}, $\mathcal R$ is invertible transformation. Then,
we shall show that $\mathcal R$ is a $U_{v,t}$-module homomorphism, i.e.,
$$\Delta(u)\mathcal{R}(m\otimes m')=\mathcal{R}(\Delta(u)m\otimes m'),
\quad \forall u\in U_{v,t}, m\in M, m'\in M'.$$
By Proposition \ref{prop:quasiR}, we have
\begin{equation*}
\Delta(u)\mathcal R(m\otimes m')
=\Theta\bar\Delta(u)\tilde{f}\circ P(m\otimes m')
=\Theta(f(|m'|,|m|)\bar\Delta(u)(m'\otimes m)).
\end{equation*}
So it is suffices to show
\begin{equation*}
\tilde{f}\circ P(\Delta(u)m\otimes m'))=
f(|m'|,|m|)\bar\Delta(u)(m'\otimes m)
\end{equation*}
for all $u\in U_{v,t}$. Hence it is enough to show that this equality holds
all generators of $U_{v,t}$. For $u=K_\nu, K'_\nu$, this is straightforward.
The cases $u=E_i$ and $u=F_i$ are similar, so we shall prove the first case.

By \eqref{eq:bardelta} and \eqref{eq:fprop}, we have
\begin{align*}
&\tilde{f}\circ P(\Delta(E_i)m\otimes m')\\
= &f(|m'|,i+|m|)m'\otimes E_im+f(i+|m'|,|m|) E_im'\otimes K_im\\
=&f(|m'|,|m|)v^{-i\cdot|m'|}c_{i,|m'|}m'\otimes E_im+
f(|m'|,|m|)E_im'\otimes m\\
=&f(|m'|,|m|)(K'_im'\otimes E_im+E_im'\otimes m)\\
=&f(|m'|,|m|)\bar\Delta(E_i)(m'\otimes m).
\end{align*}
This finishes the proof.
\end{proof}

For any finite dimension weight $U_{v,t}$-module $M_1, M_2, M_3$,
we have maps $\mathcal{R}_{12}, \mathcal{R}_{23}:M_1\otimes M_2\otimes M_3\rightarrow M_3\otimes M_2\otimes M_1$
defined as $\mathcal{R}\otimes Id$ and $Id \otimes \mathcal{R}$, respectively.
We shall now verify that $\mathcal{R}$ satisfy the quantum Yang-Baxter equation
$$
\mathcal{R}_{12}\circ \mathcal{R}_{23}\circ \mathcal{R}_{12}
=\mathcal{R}_{23}\circ \mathcal{R}_{12}\circ \mathcal{R}_{23}.
$$
We will need the following lemma.

For $1\leq s, l\leq 3$, we define $\tilde{f}_{sl}$ on $M_1\otimes M_2\otimes M_3$
via $\tilde{f}_{sl}(m_1\otimes m_2\otimes m_3)=f(|m_s|,|m_l|)m_1\otimes m_2
\otimes m_3$. Let $\Theta^{op}=\sum_{\mu}\sum_{b\in \mathbf{B}_{\mu}}b^*\otimes b,
\Theta_{12}=\sum_{\mu}\sum_{b\in \mathbf{B}_{\mu}}b\otimes b^*\otimes 1,$ and
we define the other expressions in a similar way.
Letting $\Theta^{f}_{sl}=\Theta_{sl}\circ \tilde{f}_{sl}$,
we have the following identities for operators on $M_1\otimes M_2\otimes M_3$.
\begin{lem}\label{lem:Ridentity}
$(\mathrm{i})\ (\Delta\otimes 1)(\Theta^{op})\circ \tilde{f}_{31}
\circ \tilde{f}_{32}=\Theta^f_{31}\circ \Theta^f_{32}$.\\
$(\mathrm{ii})\ \tilde{f}_{31}\circ \tilde{f}_{32}
\circ \Theta_{12}=\Theta_{12}\circ \tilde{f}_{31}\circ \tilde{f}_{32}.$
\end{lem}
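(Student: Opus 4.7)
The plan is to verify both identities pointwise on a homogeneous tensor $m_1\otimes m_2\otimes m_3\in M_1\otimes M_2\otimes M_3$, exploiting the coproduct formula of Lemma \ref{lem:delta}(i), the dual basis identity \eqref{eq:pair}, and the bilinearity of $f$ recorded in \eqref{eq:fprop}. For (ii) the argument is a one-liner: the operator $\Theta_{12}=\sum_{\mu}\sum_{b\in \mathbf{B}_\mu}b\otimes b^*\otimes 1$ shifts the weights of the first two tensor factors by $-\mu$ and $+\mu$ while leaving the third alone, so the scalars produced by $\tilde f_{31}\circ \tilde f_{32}$ before and after $\Theta_{12}$ differ only by the factor $f(|m_3|,-\mu)f(|m_3|,\mu)=f(|m_3|,0)=1$, and the two compositions agree term by term.

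For (i), the key preparatory step is to rewrite $(\Delta\otimes 1)(\Theta^{op})$ in a manageable form. Substituting the expression
\[
\Delta(b^*)=\sum_{0\le\nu\le\mu}\sum_{c\in \mathbf{B}_\nu,\,c'\in \mathbf{B}_{\mu-\nu}}(b^*,c'c)_\phi\, c'^*K_\nu\otimes c^*
\]
given by Lemma \ref{lem:delta}(i) into $(\Delta\otimes 1)(\Theta^{op})=\sum_{\mu}\sum_{b\in \mathbf{B}_\mu}\Delta(b^*)\otimes b$, interchanging the order of summation, and collapsing the inner sum $\sum_{b\in \mathbf{B}_\mu}(b^*,c'c)_\phi\, b=c'c$ by the dual basis identity \eqref{eq:pair} (applicable since $c'c\in \mathbf{U}^-_{-\mu}$), one obtains
\[
(\Delta\otimes 1)(\Theta^{op})=\sum_{\nu,\mu'}\sum_{c\in \mathbf{B}_\nu,\,c'\in \mathbf{B}_{\mu'}} c'^*K_\nu\otimes c^*\otimes c'c.
\]

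Now I evaluate both sides of (i) on $m_1\otimes m_2\otimes m_3$. The left hand side acquires a global scalar $f(|m_3|,|m_1|)f(|m_3|,|m_2|)$ from $\tilde f_{31}\circ \tilde f_{32}$ together with a per-summand factor $v^{\nu\cdot|m_1|}c_{\nu,|m_1|}$ coming from $K_\nu$ acting on $m_1$. The right hand side, computed by applying $\tilde f_{32},\Theta_{32},\tilde f_{31},\Theta_{31}$ in sequence, produces the identical tensor expression but with per-summand factor $f(|m_3|-\nu,|m_1|)=f(|m_3|,|m_1|)f(-\nu,|m_1|)$, since $\Theta_{32}$ lowers the weight of the third factor by $\nu$ before $\tilde f_{31}$ reacts. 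Matching term by term then reduces to the scalar identity $v^{\nu\cdot|m_1|}c_{\nu,|m_1|}=f(-\nu,|m_1|)$, which is a direct consequence of \eqref{eq:fprop}. The main obstacle, and what must be written out with care, is the bookkeeping of which $\Theta_{sl}$ modifies which weight and how the accompanying $\tilde f_{sl}$ scalars respond to those shifts; the underlying algebra is simply the combination of the coproduct formula and the dual basis identity.
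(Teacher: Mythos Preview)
Your proof is correct and follows essentially the same route as the paper's: both expand $(\Delta\otimes 1)(\Theta^{op})$ via Lemma~\ref{lem:delta}(i), collapse the inner sum using the dual basis identity~\eqref{eq:pair}, compute the right hand side directly, and match the two using $K_\nu\cdot m_1=f(-\nu,|m_1|)m_1$; part~(ii) is dispatched in both by the bilinearity of $f$ recorded in~\eqref{eq:fprop}.
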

\begin{proof}
We shall give a detailed proof of $(\mathrm{i})$.
For any $m_1\in M_1, m_2\in M_2, m_3\in M_3$, by Lemma \ref{lem:delta}, we have
\begin{equation*}
\begin{split}
&(\Delta\otimes 1)(\Theta^{op})\circ \tilde{f}_{31}\circ \tilde{f}_{32}(m_1\otimes m_2\otimes m_3)\\
=&f(|m_3|,|m_1|)f(|m_3|,|m_2|)(\Delta\otimes 1)(\sum_{\mu}\sum_{b\in \mathbf{B}_{\mu}}b^*\otimes b)(m_1\otimes m_2\otimes m_3)\\
=&f(|m_3|,|m_1|)f(|m_3|,|m_2|)(\sum_{\mu, b\in \mathbf{B}_{\mu}}
  \sum_{0\leq \lambda \leq \mu, b_1\in \mathbf{B}_{\lambda},b_2\in \mathbf{B}_{\mu-\lambda}}
  (b^*,b_2b_1)_{\phi}b^*_2K_{\lambda}\otimes b^*_1\otimes b)(m_1\otimes m_2\otimes m_3)\\
=&f(|m_3|,|m_1|)f(|m_3|,|m_2|)(\sum_{\mu}
  \sum_{0\leq \lambda \leq \mu ,b_1\in \mathbf{B}_{\lambda},b_2\in \mathbf{B}_{\mu-\lambda}}
  b^*_2K_{\lambda}m_1\otimes b^*_1m_2\otimes b_2b_1m_3.
\end{split}
\end{equation*}
On the other hand,
\begin{equation*}
\begin{split}
&\Theta^f_{31}\circ \Theta^f_{32}(m_1\otimes m_2\otimes m_3)\\
=&f(|m_3|,|m_2|)\Theta^f_{31}(\sum_{\nu,b'\in \mathbf{B}_{\nu}}m_1\otimes b'^*m_2\otimes b'm_3)\\
=&f(|m_3|,|m_2|)\sum_{\nu, b'\in \mathbf{B}_{\nu}}\sum_{\zeta, b''\in \mathbf{B}_{\zeta}}
  f(|m_3|-\nu,|m_1|)b''^*m_1\otimes b'^*m_2\otimes b''b'm_3.
\end{split}
\end{equation*}

By Lemma \ref{lem:hopfpair} and \eqref{eq:fdef}, we have
$$K_{\nu}\cdot m_1=f(-\nu,|m_1|)m_1.$$
We get the second expression by replacing the variables $\lambda$ and $\mu-\lambda$
in the first expression with $\nu$ and $\zeta$, respectively.
This proves $(\mathrm{i})$.

Identity $(\mathrm{ii})$ follows directly from \eqref{eq:fprop}.
\end{proof}

We thus obtain the following crucial property of $\mathcal R$.
\begin{prop}\label{prop:braidrel}
For any finite dimension weight $U_{v,t}$-modules $M_1$, $M_2$, and $M_3$, we have
\[\mathcal{R}_{12}\circ \mathcal{R}_{23}\circ \mathcal{R}_{12}=
\mathcal{R}_{23}\circ\mathcal{R}_{12}\circ \mathcal{R}_{23}:
M_1\otimes M_2\otimes M_3\rightarrow M_3\otimes M_2\otimes M_1.\]
\end{prop}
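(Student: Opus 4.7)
The plan is to follow the classical Lusztig--Jantzen strategy (see \cite[Chap. 32]{Lusztigbook} and \cite[Chap. 7]{Ja}) suitably adapted to the two-parameter setting. I would first unpack $\mathcal{R}_{ij} = \Theta_{ij}\circ \tilde f_{ij}\circ P_{ij}$, where $P_{ij}$ denotes the transposition of the $i$-th and $j$-th tensor factors and the subscripts on $\Theta$ and $\tilde f$ indicate the slots on which they act after the swap. Since the transpositions already satisfy the tautological braid relation $P_{12}P_{23}P_{12} = P_{23}P_{12}P_{23}$, what remains, after pushing all of the $P$'s to a common side of each composition, is a ``universal'' identity between two products of three $\Theta^f_{ij}:=\Theta_{ij}\circ\tilde f_{ij}$ factors, acting on $M_1\otimes M_2\otimes M_3$.

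To prove this universal identity, I would apply Lemma \ref{lem:Ridentity}(i) to contract two of the three $\Theta^f$'s into $(\Delta\otimes 1)(\Theta^{\mathrm{op}})\circ \tilde f_{31}\circ \tilde f_{32}$, and a mirror version (established by an entirely analogous argument using the coassociativity of $\overline\Delta$) to contract the other pair into $(1\otimes\Delta)(\Theta^{\mathrm{op}})\circ \tilde f_{31}\circ \tilde f_{21}$. Proposition \ref{prop:quasiR}, applied summand-by-summand to the remaining $\Theta_{12}=\sum_\nu\sum_{b\in \mathbf{B}_\nu}b\otimes b^*$ via the intertwining relation $\Delta(u)\Theta = \Theta\overline\Delta(u)$, then lets me slide this factor across the coproduct that has appeared in the adjacent slots. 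Lemma \ref{lem:Ridentity}(ii) supplies the needed commutation of the diagonal scalar operators $\tilde f$ through $\Theta_{12}$, and together these moves collapse the two sides to a common expression.

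The main obstacle will be the bookkeeping of the scalars $f(\lambda,\mu)$ under the weight shifts caused by the $\Theta_\mu$ factors: each summand of $\Theta_\mu$ subtracts $\mu$ from the weight in one tensor slot and adds it to the weight in another, so the bilinear function $f$ picks up cross terms that must cancel between the two sides. The multiplicativity of $f$ recorded in \eqref{eq:fprop} and the precise interaction laws in Lemma \ref{lem:Ridentity} are calibrated so that these cross terms telescope, but verifying this explicitly requires tracking both the $v$-exponents (as in \cite[Chap. 7]{Ja} and \cite[Lemma 4.11]{BW}) and the skew $t$-exponents $\langle i,j\rangle - \langle j,i\rangle$ at each step, which is the genuinely new feature of the two-parameter case; bilinearity of $\langle\cdot,\cdot\rangle$ and of ``$\,\cdot\,$'' on $\mathbb{Z}[I]$ ensures that all such contributions remain compatible throughout.
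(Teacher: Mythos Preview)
Your proposal is correct and follows essentially the same route as the paper: decompose each $\mathcal{R}_{ij}$ as $\Theta_{ij}\circ\tilde f_{ij}\circ P_{ij}$, push the transpositions to one side via the elementary commutation rules $P_{ab}\circ\Theta^f_{cd}=\Theta^f_{c'd'}\circ P_{ab}$ (these are exactly the identities the paper lists), and then reduce the remaining identity among the $\Theta^f$'s using Lemma~\ref{lem:Ridentity}. The only cosmetic difference is that the paper invokes Proposition~\ref{prop:Rmat} (the module-homomorphism property of $\mathcal{R}$) at the point where you invoke Proposition~\ref{prop:quasiR} (the intertwining $\Delta(u)\Theta=\Theta\overline{\Delta}(u)$); since the former is proved from the latter, this is the same ingredient packaged at a slightly different level, and your bookkeeping of the $v$- and $t$-exponents via \eqref{eq:fprop} is precisely what is needed.
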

\begin{proof}
This proposition follows from Lemma \ref{lem:Ridentity},
Proposition \ref{prop:Rmat} and the following identities.
\begin{align*}
P_{12}\circ\Theta^{f}_{12}=\Theta^{f}_{12}\circ P_{12},\quad
&P_{23}\circ\Theta^{f}_{23}=\Theta^{f}_{32}\circ P_{23},\quad
P_{23}\circ\Theta^{f}_{31}=\Theta^{f}_{21}\circ P_{23},\\
P_{23}\circ\Theta^{f}_{12}=\Theta^{f}_{13}\circ P_{23},\quad
&P_{12}\circ\Theta^{f}_{13}=\Theta^{f}_{23}\circ P_{12}.
\end{align*}
\end{proof}

\noindent\textbf{Remark.}
By specialization of $\mathcal{R}$-matrix, we recover the one for various quantum algebras in literatures.\\
(1) By setting $t=1$, the $\mathcal{R}$-matrix in Proposition \ref{prop:Rmat} degenerates into the one for
one-parameter quantum algebras\cite[Chapter 32]{Lusztigbook}.\\
(2) By setting $v=(rs^{-1})^{\frac{1}{2}}$ and $t=(rs)^{-\frac{1}{2}}$,
the $\mathcal{R}$-matrix in Proposition \ref{prop:Rmat} coincides with the one in \cite{BW}.

\section{Knot invariants associated to the two-parameter quantum algebra $U_{v,t}$}\label{sec:knotinvariant}

\subsection{Tangles}
Recall the definition of the tangle from \cite{Kassel}.
Let $[0]$ be the empty set and  $[n]=\{1,2, \cdots, n\}$ for any integer $n>0$.
We denote by $J$ the closed interval $[0, 1]$ and by $\mathbb{R}^2$ the real plane.
\begin{dfn}\cite[Section X.5]{Kassel}
Let $k$ and $l$ be nonnegative integers.
A tangle $L$ of type $(k, l)$ is the union of a finite number of
pairwise disjoint simple oriented polygonal arcs in $X=\mathbb{R}^2 \times J$
 such that the boundary $\partial L$ of $L$ satisfies the condition
 $$
 \partial L=L\cap (\mathbb{R}^2 \times \{0,1\})=([k]\times \{0\} \times \{0\})\cup ([l]\times \{0\} \times \{1\}).
 $$
\end{dfn}

For a tangle $L$ of type $(k,l)$, there exists two finite sequences
$s(L)$ and $b(L)$ consisting of $+$ and $-$ signs.
Let $s(L)=(\varepsilon_1, \cdots , \varepsilon_k)$ and $b(L)=(\eta_1, \cdots , \eta_l)$.
For $1\leq i\leq k$, let $\varepsilon_i=+$ (resp. $\varepsilon_i=-$)
if the point $(i, 0, 0)$ is an origin (resp. an endpoint) of $L$.
On the contrary, for $1\leq i\leq l$, let $\eta_i=+$ (resp. $\eta_i=-$)
if the point $(i, 0, 1)$ is an endpoint (resp. an origin) of $L$.

$$
\hctikz{\idsup{-1}{1}{0.5}{0.5}{0.5}}\ ,\ \ \hctikz{\idsdown{1}{1}{0.5}{0.5}{0.5}}\ ,\ \
\hctikz{\ccwcap{0.5}{-.1}{0.6}{.5}}\ ,\ \ \hctikz{\cwcap{0.5}{-.1}{0.6}{.5}}\ ,\ \
\hctikz{\cwcup{0.5}{-.1}{0.6}{.5}}\ ,\ \ \hctikz{\ccwcup{0.5}{-.1}{0.6}{.5}}\ ,\ \
\hctikz{\rcrossup{0}{0}{0.5}{.5}{0.5}}\ ,\ \ \hctikz{\lcrossup{0}{0}{0.5}{0.5}{0.5}}
$$
$$\mathrm{Figure}\ 1$$

From left to right, we denote the oriented tangles shown in Figure 1 by the symbols
$\uparrow, \downarrow, \curvearrowleft, \curvearrowright,
\undercurvearrowleft, \undercurvearrowright, X_+$ and $X_-$, respectively.

\noindent$\textbf{Example}$
(1) For the tangles $\uparrow$ and $\downarrow$,
we have
$$s(\uparrow)=(+), b(\uparrow)=(+),s(\downarrow)=(-) \ \rm{and}\  b(\downarrow)=(-).$$
(2)For the tangles $\curvearrowleft$,
we have $s(\curvearrowleft)=(-,+), b(\curvearrowleft)=\emptyset$.

The product $\circ$ and the tensor product $\otimes$ on the set of all isotopy types of tangles are defined as follows
$$
\hctikz{\standin{0}{.1}{.8}{.8}{L}} \circ \hctikz{\standin{0}{.1}{.8}{.8}{T}}=
\hctikz{\standin{0}{.1}{.8}{.8}{L} \standin{0}{-.8}{.8}{.8}{T}}\ ,
\quad
\hctikz{\standin{0}{.1}{.8}{.8}{L}} \otimes \hctikz{\standin{0}{.1}{.8}{.8}{T}}=
\hctikz{\standin{0}{.1}{.8}{.8}{L}}\ \hctikz{\standin{0}{.1}{.8}{.8}{T}}\ ,
$$
where $L$ and $T$ are tangles.
The product $L\circ T$ is well defined when $s(L)$ = $b(T)$.

\subsection{The strict monoidal category $(\rm{OTa},\otimes, \emptyset)$ of tangles}

Let ${\rm OTa}$ be a category. The objects of ${\rm OTa}$ are all finite sequences
consisting of $+$ and $-$ including  the empty sequence.
Given two finite sequences $\varepsilon=(\varepsilon_1, \cdots \varepsilon_k)$
and $\nu=(\nu_1, \cdots \nu_l)$, the oriented $(k,l)$-tangle $L$
represents a morphism from $\varepsilon$ to $\nu$
such that $s(L)=\varepsilon$ and $b(L)=\nu$.
The tensor product of the objects $\varepsilon$ and $\nu$
is the object $(\varepsilon, \nu)$.
The composition (resp. tensor product) of morphisms is the product (resp. tensor product) of tangles.
It is clear that $({\rm OTa}, \otimes, \emptyset)$
is a strict monoidal category\cite[Proposition XII.2.1]{Kassel}.

In what follows, we shall omit the $\otimes$ sign between morphisms
if there is no dangers of confusion.

\begin{thm}\label{thm:OTa}~\cite[Theorem 3.2]{Tur90}
The category ${\rm OTa}$ is generated by the morphisms $\undercurvearrowleft,\ \undercurvearrowright,
\ \curvearrowleft,\ \curvearrowright,\ X_+$ and
$\ X_-$ , and is presented by them together with the relations:
 \begin{itemize}
   \item[(i)] $(\curvearrowright\ \uparrow)\circ(\uparrow\ \undercurvearrowright)
=\ \uparrow\
=(\uparrow\ \curvearrowleft)\circ(\undercurvearrowleft\ \uparrow);$
\vspace{6pt}
\item[(ii)] $(\curvearrowleft\ \downarrow)\circ(\downarrow\ \undercurvearrowleft)
=\ \downarrow\
=(\downarrow\ \curvearrowright)\circ(\undercurvearrowright\ \downarrow);$
\vspace{6pt}
 \item[(iii)] $\ \ \ (\downarrow\ \downarrow\ \curvearrowright)
\circ (\downarrow\ \downarrow\ \uparrow\ \curvearrowright\ \downarrow)
\circ(\downarrow\ \downarrow\ X_{\pm}\ \downarrow\ \downarrow)
\circ(\downarrow\ \undercurvearrowright\ \uparrow\ \downarrow\ \downarrow)
\circ(\undercurvearrowright\ \downarrow\ \downarrow)\\
=
(\curvearrowleft\ \downarrow\ \downarrow)
\circ (\downarrow\ \curvearrowleft\ \uparrow\ \downarrow\ \downarrow)
\circ(\downarrow\ \downarrow\ X_{\pm}\ \downarrow\ \downarrow)
\circ(\downarrow\ \downarrow\ \uparrow\ \undercurvearrowleft\ \downarrow)
\circ(\downarrow\ \downarrow\ \undercurvearrowleft);
$
\vspace{6pt}
\item[(iv)] $
\ X_+\ \circ\ X_-\
=\ X_-\ \circ\ X_+\ \
=\ \uparrow\ \uparrow\ ;
$
\vspace{6pt}
\item[(v)] $
(X_+\ \uparrow)\circ (\uparrow\ X_+)\circ(X_+\ \uparrow)
=
(\uparrow \ X_+)\circ(X_+\ \uparrow)\circ(\uparrow\ X_+);
$
\vspace{6pt}
\item[(vi)] $
(\uparrow\ \curvearrowright)\circ (X_{\pm}\ \downarrow)\circ(\uparrow\ \undercurvearrowleft)
=\ \uparrow\ ;$
\vspace{6pt}
\item[(vii)] $
Y\circ T\ =\ \downarrow\ \uparrow\ ,\ \
T\circ Y\ =\ \uparrow\ \downarrow\ ,
$
\vspace{6pt}
\end{itemize}
where $Y=(\downarrow\ \uparrow\ \curvearrowright)
\circ(\downarrow\ X_+\ \downarrow)
\circ(\undercurvearrowright\ \uparrow\ \downarrow)$
and $T=(\curvearrowleft\ \uparrow\ \downarrow)
\circ(\downarrow\ X_-\ \downarrow)
\circ(\downarrow\ \uparrow\ \undercurvearrowleft).$
\end{thm}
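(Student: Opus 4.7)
The natural strategy is the standard two-step approach: prove generation first, then completeness of the relations. For generation, I would fix an oriented tangle $L$ and put its planar projection in general position with respect to the height function $(x,y,z)\mapsto z$, so that the height restricted to $L$ is Morse with only nondegenerate local maxima and minima, all critical values are distinct, and the heights of crossings are distinct from those of critical points. Slicing $X = \mathbb{R}^2\times J$ by a finite sequence of generic horizontal planes then decomposes $L$ as a vertical composition of tangles, each of which is a horizontal tensor product of identity strands $\uparrow, \downarrow$ together with exactly one occurrence of a generator $\curvearrowleft, \curvearrowright, \undercurvearrowleft, \undercurvearrowright, X_+$ or $X_-$. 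This shows the listed morphisms generate $\mathrm{OTa}$.

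For completeness, I would invoke the oriented version of Reidemeister's theorem for tangles: two tangle diagrams represent isotopic oriented tangles if and only if they are related by a finite sequence of planar isotopies together with oriented Reidemeister moves R-I, R-II, R-III. Each such move, after translating a diagram into a word in the generators via the slicing above, becomes an equality of morphisms in the free strict monoidal category on the six generators. The task is then to show that each such equality already holds modulo relations (i)--(vii). The planar isotopies that swap the heights of two causally independent events are automatic in any strict monoidal category (this is the interchange law) and contribute no relations. The zigzag or ``snake'' isotopies that straighten a cap-cup pair give exactly relations (i) and (ii). Relation (iii) is the naturality of the crossing with respect to caps and cups and is what allows one to slide crossings past turning points; relation (iv) is oriented R-II; relation (v) is one oriented form of R-III; relation (vi) is the framed R-I (twist straightening); and relation (vii) expresses that the crossings with reversed underlying orientations are mutually inverse, equivalently writing reversed crossings in terms of $X_\pm$ and the cups/caps.

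The main obstacle is the last step: showing that the single instance of each Reidemeister move appearing in (iii)--(vii) suffices to derive \emph{every} oriented variant. There are many oriented versions of R-II and R-III (depending on the orientations assigned to each of the two or three participating strands), and only a few are stated explicitly. The argument is a combinatorial reduction: given any oriented variant, use (iii) to pull strands around caps and cups so as to reverse their orientations locally, use (vii) to rewrite reverse-oriented crossings in terms of $X_\pm$, and then apply the chosen instance of (iv) or (v). I would organize this reduction by treating R-II and R-III variants separately and, for R-III, verifying the eight orientation assignments by a systematic chain of substitutions. A parallel reduction is required for the oriented variants of R-I appearing via (vi). The remainder of the proof is a routine check that vertical and horizontal ``slice reorderings'' in the decomposition correspond to interchange-law moves only.

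Since this result (as the authors note) is due to Turaev, I would in practice simply cite \cite{Tur90}; the plan above mirrors the structure of that original proof, which in turn is modeled on Reidemeister's classical argument together with the Morse-theoretic slicing picture that is by now standard in the study of monoidal categories of tangles.
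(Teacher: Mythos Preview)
Your proposal is appropriate and your final remark is exactly right: the paper does not prove this theorem at all, but simply cites \cite[Theorem 3.2]{Tur90} and uses it as a black box. So your suggestion to cite Turaev matches the paper's own treatment precisely, and the sketch you give of Turaev's argument (Morse-theoretic slicing for generation, then reducing all oriented Reidemeister variants to the listed relations) is an accurate summary of the standard proof.
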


\subsection{The strict monoidal category $(\rm{Mod},\otimes, \mathbb{Q}(v,t))$ of $U_{v,t}$-modules}
Let $\{M(i)\}_{i\in \{+,-\}}$ be a collection of $U_{v,t}$-modules
where $M(+)$ is a finite dimensional weight module
of the two parameter quantum algebra $U_{v,t}$ with the
highest weight $\lambda$ and $M(-)$ is its dual module.
To each finite sequence $j =(i_1, \cdots , i_n)$ with
$ i_1, \cdots , i_n \in \{+,-\}$ we associate the $U_{v,t}$-module
$$
M(j)=(\cdots ((M(i_1)\otimes_{U_{v,t}}M(i_2)) \otimes_{U_{v,t}}M(i_3))
\otimes\cdots ) \otimes_{U_{v,t}}M(i_n)).
$$
For the empty sequence $\emptyset$, we set $M(\emptyset)=\mathbb{Q}(v,t)$.

Consider the category ${\rm Mod}$ whose objects
are the pairs $(j , M(j))$ for all finite sequences
$j$ of elements of $\{+,-\}$.
The morphisms $(j, M(j))\rightarrow (j', M(j'))$ consist of
all $U_{v,t}$-linear homomorphisms $M(j)\rightarrow M(j').$
Composition of morphisms is the usual composition of homomorphisms.
For simplicity, we denote the object $(j , M(j))$ by $M(j)$.
The tensor product $\otimes$ is defined by setting $M(j)\otimes M(j')=M(j, j')$.

\subsection{Knot invariants}\label{subsec:knotinvariant}
Recall that $\mathrm{ev}, \mathrm{qtr}, \mathrm{coev}$, and
 $\mathrm{coqtr}$ are the maps defined in Lemma \ref{lem:fourmaps}.
Likewise, let $\mathcal{R}$ be the map defined in Proposition \ref{prop:Rmat}.

We denote the maps $\mathrm{id}_+, \mathrm{id}_-, \mathrm{ev}, \mathrm{qtr},
\mathrm{coev}, \mathrm{coqtr}, \mathcal{R}$ and $\mathcal{R}^{-1}$ by the symbols
$\uparrow, \downarrow, \curvearrowleft, \curvearrowright,
\undercurvearrowright, \undercurvearrowleft, X_+$ and $X_-$, respectively.
Then we have some substantial diagrammatic identities.
\begin{lem}\label{lem:relation12}
We have four equalities of diagrams
\[\hctikz{
\idsup{1}{.75}{.5}{.75}{1}\dcap{0}{.75}{.5}{.5}
\dcup{.5}{.25}{.5}{.5}\idsup{0}{0}{.5}{.75}{1}
}\quad =\quad  \hctikz{\idsup{0}{0}{1}{1.5}{1};
}\quad=\quad
\hctikz{
\idsup{0}{.75}{.5}{.75}{1}\dcap{.5}{.75}{.5}{.5}
\dcup{0}{.25}{.5}{.5}\idsup{1}{0}{.5}{.75}{1};
}\ ,
\]
\[\hctikz{
\idsdown{1}{.75}{.5}{.75}{1}\dcap{0}{.75}{.5}{.5}
\dcup{.5}{.25}{.5}{.5}\idsdown{0}{0}{.5}{.75}{1}
}\quad =\quad  \hctikz{\idsdown{0}{0}{1}{1.5}{1};
}\quad=\quad
\hctikz{
\idsdown{0}{.75}{.5}{.75}{1}\dcap{.5}{.75}{.5}{.5}
\dcup{0}{.25}{.5}{.5}\idsdown{1}{0}{.5}{.75}{1};
}\ .
\]
\end{lem}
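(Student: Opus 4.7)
The four diagram identities in the lemma are the standard zigzag (snake) relations expressing the duality pairing between $M=M(+)$ and $M^*=M(-)$. The plan is to (i) translate each of the four non-trivial zigzag diagrams into a composition of module maps via the dictionary given just before the lemma, and (ii) verify each composition equals the identity by a direct basis computation using the formulas for $\mathrm{ev}$, $\mathrm{qtr}$, $\mathrm{coev}$, $\mathrm{coqtr}$ from Lemma~\ref{lem:fourmaps}.

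For step (i), in each diagram the single connected strand has its orientation forced by the adjacent $\uparrow$ or $\downarrow$, which in turn fixes the orientation of the undirected cap and cup. Tracing the strand, the four zigzag diagrams read as
\begin{align*}
&(\mathrm{qtr}\otimes\mathrm{id}_M)\circ(\mathrm{id}_M\otimes\mathrm{coev}),
&&(\mathrm{id}_M\otimes\mathrm{ev})\circ(\mathrm{coqtr}\otimes\mathrm{id}_M),\\
&(\mathrm{ev}\otimes\mathrm{id}_{M^*})\circ(\mathrm{id}_{M^*}\otimes\mathrm{coqtr}),
&&(\mathrm{id}_{M^*}\otimes\mathrm{qtr})\circ(\mathrm{coev}\otimes\mathrm{id}_{M^*}),
\end{align*}
in the order first line left, first line right, second line left, second line right.

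For step (ii), fix a homogeneous basis $B$ of $M$ with dual basis $\{w^*\mid w\in B\}$ and recall that $v_{-\lambda}=v_\lambda^{-1}$. Under the first composition, a homogeneous $m\in M$ is sent to
\[
\sum_{w\in B} v_{|w|}^2\, m\otimes w^*\otimes w \ \longmapsto\ \sum_{w\in B} v_{|w|}^2\, v_{-|m|}^2\, w^*(m)\, w \ =\ v_{|m|}^2 v_{-|m|}^2\, m \ =\ m,
\]
since the Kronecker $\delta$ from $w^*(m)$ forces $|w|=|m|$ and the two $v$-weights cancel. The fourth composition is the same calculation with $M$ and $M^*$ swapped. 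The second and third compositions pair $\mathrm{coqtr}$ with $\mathrm{ev}$, neither of which carries a $v$-weight: the second sends $m\mapsto\sum_w m\otimes w\otimes w^*\mapsto\sum_w w^*(m)\,w=m$, and the third is the analogous computation on $M^*$.

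The argument is mechanical once the dictionary is in place; the only point requiring care is the orientation bookkeeping, for which the rule is that each zigzag is a single connected strand whose direction is dictated by the adjacent $\uparrow$ or $\downarrow$. The key algebraic observation is that the weight $v_{|w|}^2$ contributed by $\mathrm{coev}$ exactly cancels the weight $v_{-|w|}^2$ contributed by $\mathrm{qtr}$ in the surviving term of the sum, which is why $\mathrm{coev}$ pairs with $\mathrm{qtr}$ and $\mathrm{coqtr}$ with $\mathrm{ev}$ in the zigzag identities.
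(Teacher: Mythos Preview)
Your proof is correct and follows the same direct basis-computation approach as the paper, which only writes out the first identity $(\mathrm{qtr}\otimes\mathrm{id}_+)\circ(\mathrm{id}_+\otimes\mathrm{coev})=\mathrm{id}_+$ on a basis element and declares the remaining three similar. One small slip: in your second computation the intermediate element should read $\sum_w w\otimes w^*\otimes m$ (since $\mathrm{coqtr}$ is tensored on the left of $\mathrm{id}_M$), not $\sum_w m\otimes w\otimes w^*$, though this does not affect your conclusion.
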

\begin{proof}
The proofs of the four equalities are similar.
We show the first equality in detail.
In terms of morphisms, we wish to show
$$
(\mathrm{qtr} \otimes \mathrm{id}_+)\circ (\mathrm{id}_+ \otimes \mathrm{coev})=\mathrm{id}_+.
$$
Let $B$ be a homogeneous basis of $M(+)$ and $B^*$ the dual basis of $M(-)$.
Then for any $w_0\in B$, we have
\begin{align*}
({\rm qtr} \otimes {\rm id}_+)\circ ({\rm id}_+ \otimes {\rm coev})(w_0)
&=\sum_{w\in B}v^2_{|w|}({\rm qtr} \otimes {\rm id}_+)(w_0\otimes w^*\otimes w)\\
&=\sum_{w\in B}v^2_{|w|}v^2_{-|w_0|}w^*(w_0)w=w_0.
\end{align*}
\end{proof}
To distinguish $\hctikz{\rcrossdown{0.5}{0}{.5}{.5}}$ from $\hctikz{\rcrossup{0}{0}{0.5}{.5}{0.5}}$ ,
we let
$$
\mathcal{R}_{+,+}=\hctikz{\rcrossup{0}{0}{0.5}{.5}{0.5}}\ ,
\qquad \mathcal{R}_{-,-}=\hctikz{\rcrossdown{0.5}{0}{.5}{.5}}\ .
$$
\begin{lem}\label{lem:relation03}
We have four equalities of diagrams
\[(1)\quad
\hctikz{
\dcap{0}{.75}{1.5}{1}\dcap{.5}{.75}{.5}{.5}
\rcrossdown{1}{0}{.75}{.5}
\idsup{0}{0}{.5}{.75}{2};
}\quad = \quad
\hctikz{
\dcap{0}{.75}{1.5}{1}\dcap{.5}{.75}{.5}{.5}
\rcrossup{0}{0}{.75}{.5}
\idsdown{1}{0}{.5}{.75}{2};
}\ ,
\quad\quad(2)\quad
\hctikz{
\dcap{0}{.75}{1.5}{1}\dcap{.5}{.75}{.5}{.5}
\rcrossup{1}{0}{.75}{.5}
\idsdown{0}{0}{.5}{.75}{2};
}\quad = \quad
\hctikz{
\dcap{0}{.75}{1.5}{1}\dcap{.5}{.75}{.5}{.5}
\rcrossdown{0}{0}{.75}{.5}
\idsup{1}{0}{.5}{.75}{2};
}\ ,
\]

\[(3)\quad
\hctikz{
\dcap{0}{.75}{1.5}{1}\dcap{.5}{.75}{.5}{.5}
\lcrossdown{1}{0}{.75}{.5}
\idsup{0}{0}{.5}{.75}{2};
}\quad = \quad
\hctikz{
\dcap{0}{.75}{1.5}{1}\dcap{.5}{.75}{.5}{.5}
\lcrossup{0}{0}{.75}{.5}
\idsdown{1}{0}{.5}{.75}{2};
}\ ,
\quad\quad(4)\quad
\hctikz{
\dcap{0}{.75}{1.5}{1}\dcap{.5}{.75}{.5}{.5}
\lcrossup{1}{0}{.75}{.5}
\idsdown{0}{0}{.5}{.75}{2};
}\quad = \quad
\hctikz{
\dcap{0}{.75}{1.5}{1}\dcap{.5}{.75}{.5}{.5}
\lcrossdown{0}{0}{.75}{.5}
\idsup{1}{0}{.5}{.75}{2};
}\ .
\]
\end{lem}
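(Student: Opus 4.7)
The plan is to translate each of the four pictorial identities into an equality of $U_{v,t}$-linear maps using the dictionary of Section \ref{subsec:knotinvariant}, and then verify the equality by direct evaluation on a homogeneous basis. All four diagrams have the same boundary (four endpoints on the bottom and none on top), so the two sides of each equation are $U_{v,t}$-module homomorphisms from a specified tensor product of copies of $M(+)$ and $M(-)$ into $\mathbb{Q}(v,t)$, and it suffices to check the identity on basis vectors. On each crossing I would expand $\mathcal{R} = \Theta \circ \tilde{f} \circ P$ via Proposition \ref{prop:Rmat}; for $\mathcal{R}^{-1}$ I would use Corollary \ref{cor:bartheta} together with the explicit description of $\bar{\Theta}$ in Lemma \ref{lem:barquasiR}. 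The caps $\mathrm{ev}$ and $\mathrm{qtr}$ from Lemma \ref{lem:fourmaps} then contract a vector of $M(+)$ against one of $M(-)$, pulling out a scalar together with an extra $v_{|w|}^{-2}$ weight in the case of $\mathrm{qtr}$. Because the two caps enclose the crossing, the sum $\Theta_\mu = \sum_{b \in \mathbf{B}_\mu} b \otimes b^*$ collapses through \eqref{eq:pair} and the duality of $\{b\}$ and $\{b^*\}$ under $(,)_\phi$, leaving an explicit scalar multiple of the output basis vector on each side; the identity amounts to matching those two scalars.

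Identities (1) and (2) involve a single $\mathcal{R}$-crossing and express that a downward crossing $\mathcal{R}_{-,-}$ equals, after being rotated around a cap, the upward crossing $\mathcal{R}_{+,+}$ on the dual strands, i.e.\ the compatibility of $\mathcal{R}$ with the duality $M(-) \cong M(+)^*$ implicit in Lemma \ref{lem:fourmaps}. Identities (3) and (4) are the analogues for $\mathcal{R}^{-1}$, which I would prove by the same direct computation with $\bar{\Theta}$ in place of $\Theta$, or alternatively derive from (1) and (2) by composing each side with an additional crossing and using $\mathcal{R} \circ \mathcal{R}^{-1} = \mathrm{id}$ (Corollary \ref{cor:bartheta}) together with Lemma \ref{lem:relation12} to slide the surplus crossing away through a cap.

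The main obstacle is the bookkeeping of the numerous scalar weights. The map $\tilde{f}$ contributes $f(\lambda,\mu) = (K_\lambda, K'_\mu)_\phi^{-1}$ whose explicit form via Lemma \ref{lem:hopfpair}(iii) involves mixed powers of $v^{i\cdot j}$ and $t^{\langle i,j\rangle}$; the maps $\mathrm{qtr}$ and $\mathrm{coev}$ contribute $v_{|w|}^{\pm 2}$; and the anti-involution $\sigma^+$ appearing in $\bar{\Theta}$ via Lemma \ref{lem:barquasiR} twists $t$-powers according to \eqref{eq:sigma-f}. These must cancel or combine correctly on the two sides of each identity. Using the multiplicativity properties \eqref{eq:fprop} of $f$ and the weight balance $|b| + |b^*| = 0$ (since $b \in \mathbf{U}^-_{-\mu}$ and $b^* \in \mathbf{U}^+_\mu$), the factors align exactly and the identities follow.
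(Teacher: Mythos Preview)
Your high-level strategy---translate the diagrams into composites of $\mathrm{ev}$, $\mathrm{qtr}$ and $\mathcal{R}^{\pm 1}$, then evaluate both sides on homogeneous tensors---is exactly what the paper does. But you have not identified the mechanism that actually makes the two sides of (1) and (2) match, and your description of the sum ``collapsing through \eqref{eq:pair}'' is not what happens.

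The point is that on the side where the crossing sits on the dual strands, $\mathcal{R}_{-,-}$ acts on $M(-)\otimes M(-)$, so each $b\in\mathbf U^-$ and $b^*\in\mathbf U^+$ in $\Theta$ acts through the antipode $S$ via \eqref{eq:uaction}. By Lemma~\ref{lem:S} this produces $\sigma^-(b)$ and $\sigma^+(b^*)$ together with explicit $v$-powers and $K'$-weight factors. On the other side, where the crossing $\mathcal{R}_{+,+}$ sits on $M(+)\otimes M(+)$, you instead rewrite $\Theta$ in the basis $\sigma^-(\mathbf B)$ using Lemma~\ref{lem:hopfpairprop} (which gives $\sigma^-(b)^*=\sigma^+(b^*)$), so that $\Theta=\sum_{\nu,b}\sigma^-(b)\otimes\sigma^+(b^*)$. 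After these two moves both sides become the same sum $\sum_{\nu,b} m_4^*(\sigma^-(b)m_2)\,m_3^*(\sigma^+(b^*)m_1)$, and the identity reduces to a scalar equality in $f$, $v$ and $c$ that follows from \eqref{eq:fprop} together with the weight constraints forced by the pairings. There is no collapse of the $\Theta$-sum via \eqref{eq:pair}; the sums survive on both sides and are matched term by term. So your plan is missing the essential input of Lemma~\ref{lem:S} and Lemma~\ref{lem:hopfpairprop}, and without them the bookkeeping you describe will not close up.

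For (3) and (4) your idea of deducing them from (1) and (2) by composing with an extra crossing and using $\mathcal{R}\circ\mathcal{R}^{-1}=\mathrm{id}$ together with Lemma~\ref{lem:relation12} is a reasonable alternative; the paper simply says the four computations are similar and does them all directly.
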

\begin{proof}
The proofs of $(1)-(4)$ are similar.
We shall only show (1) in detail.
It is equivalent to show that (1) holds for the following equality
$$\varphi=\psi,$$
where
\begin{align*}
\varphi=&\rm{qtr} \circ (\rm{id}_+\otimes \rm{qtr} \otimes \rm{id}_-) \circ
(\rm{id}_+\otimes\rm{id}_+\otimes \mathcal{R}_{-,-}),\\
\psi=&\rm{qtr} \circ (\rm{id}_+\otimes \rm{qtr} \otimes \rm{id}_-) \circ
(\mathcal{R}_{+,+}\otimes \rm{id}_-\otimes\rm{id}_-).
\end{align*}

Let $m_1$, $m_2\in M(+)$ and $m_3^*$, $m_4^*\in M(-)$.
On the left hand side, by Lemma \ref{lem:S}, we have
\begin{align*}
&\varphi(m_1\otimes m_2\otimes m_3^*\otimes m_4^*)\\
=&\sum_{\nu}\sum_{b\in \mathbf{B}_{\nu}}v^2_{-|m_1|-|m_2|}
f(|m_4^*|,|m_3^*|)(b\cdot m_4^*)(m_2)(b^*\cdot m_3^*)(m_1)\\
=&\sum_{\nu}\sum_{b\in \mathbf{B}_{\nu}}v^2_{-|m_1|-|m_2|}
f(|m_4^*|,|m_3^*|)v^{\nu\cdot (|m_2|-|m_1|-\nu)}c_{-\nu, |m_1|+|m_2|}\\
&\qquad \qquad m_4^*(\sigma^-(b)m_2)m_3^*(\sigma^+(b^*)m_1).
\end{align*}
On the right hand side, by Lemma \ref{lem:hopfpairprop}, we rewrite
the presentation of $\Theta$ in the basis $\sigma^-(\mathbf{B})$ such as
$$
\Theta=\sum_{\nu}\sum_{b\in \mathbf{B}_{\nu}}\sigma^-(b)\otimes \sigma^+(b^*).
$$
Then, we have
\begin{align*}
&\psi(m_1\otimes m_2\otimes m_3^*\otimes m_4^*)\\
=&\sum_{\nu}\sum_{b\in \mathbf{B}_{\nu}}v^2_{-|m_1|-|m_2|} f(|m_2|,|m_1|)
m_4^*(\sigma^-(b)m_2)m_3^*(\sigma^+(b^*)m_1).
\end{align*}
It is enough to show that
\begin{equation*}
f(|m_2|,|m_1|)=f(|m_4^*|,|m_3^*|)
v^{\nu\cdot (|m_2|-|m_1|-\nu)}c_{-\nu, |m_1|+|m_2|}.
\end{equation*}
This equality holds for $|m_1|=-|m_3^*|-\nu$ and $|m_2|=-|m_4^*|+\nu$.
\end{proof}

By this lemma, we have the following corollary.
\begin{cor}\label{cor:relation3}
We have four equalities of diagrams
\[
\hctikz{
\rcrossdown{0}{0}{1}{1};
}\quad =\quad
\hctikz{
\dcap{0}{.75}{1.5}{1}\dcap{.5}{.75}{.5}{.5}\idsdown{2}{0}{.5}{1.5}{2}
\rcrossup{1}{0}{.75}{.5}
\idsdown{0}{-.75}{.5}{1.5}{2}\dcup{1.5}{-.5}{.5}{.5}\dcup{1}{-1}{1.5}{1};
}\quad =\quad
\hctikz{
\dcap{1}{.75}{1.5}{1}\dcap{1.5}{.75}{.5}{.5}\idsdown{0}{0}{.5}{1.5}{2}
\rcrossup{1}{0}{.75}{.5}
\idsdown{2}{-.75}{.5}{1.5}{2}\dcup{.5}{-.5}{.5}{.5}\dcup{0}{-1}{1.5}{1};
}\ ,\]

\[
\hctikz{
\lcrossdown{0}{0}{1}{1};
}\quad =\quad
\hctikz{
\dcap{0}{.75}{1.5}{1}\dcap{.5}{.75}{.5}{.5}\idsdown{2}{0}{.5}{1.5}{2}
\lcrossup{1}{0}{.75}{.5}
\idsdown{0}{-.75}{.5}{1.5}{2}\dcup{1.5}{-.5}{.5}{.5}\dcup{1}{-1}{1.5}{1};
}\quad =\quad
\hctikz{
\dcap{1}{.75}{1.5}{1}\dcap{1.5}{.75}{.5}{.5}\idsdown{0}{0}{.5}{1.5}{2}
\lcrossup{1}{0}{.75}{.5}
\idsdown{2}{-.75}{.5}{1.5}{2}\dcup{.5}{-.5}{.5}{.5}\dcup{0}{-1}{1.5}{1};
}\ .\]
\end{cor}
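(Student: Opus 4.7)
The plan is to derive this corollary by combining the two preceding results: the zig-zag identities of Lemma \ref{lem:relation12} expand each down-strand into a cup--cap composition, and then Lemma \ref{lem:relation03} lets us migrate the $\mathcal{R}_{-,-}$ (resp. $\mathcal{R}_{-,-}^{-1}$) crossing through the newly-introduced caps, converting it into $\mathcal{R}_{+,+}$ (resp. $\mathcal{R}_{+,+}^{-1}$). All four equalities are proved by the same template, so I will sketch the first one in detail and indicate how the others follow.

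Starting from $\mathcal{R}_{-,-} = \hctikz{\rcrossdown{0}{0}{1}{1}}$, I first apply Lemma \ref{lem:relation12} to each of its two down-strands, rewriting each strand as a zig-zag with a cup on the bottom and a cap on the top. Choosing the orientation of these zig-zags so that the new cups and caps lie to the right of the crossing, the resulting picture has $\mathcal{R}_{-,-}$ in the upper-left, two straight down-strands to its left, and a nested pair of caps directly above the crossing (with corresponding cups below). This substitution does not change the underlying morphism since each zig-zag equals $\hctikz{\idsdown{0}{0}{.3}{.75}{1}}$.

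With $\mathcal{R}_{-,-}$ now sitting immediately below a nested pair of caps, the local configuration matches the left-hand side of Lemma \ref{lem:relation03}(1). Invoking that lemma replaces $\mathcal{R}_{-,-}$ by $\mathcal{R}_{+,+}$ at the cost of flipping the orientation of the adjacent auxiliary strand from down to up. After this replacement, the global picture is, up to planar isotopy, exactly the middle diagram of the first equality in the corollary. The second equality is obtained by the mirror procedure: place the zig-zags on the left of the crossing and apply Lemma \ref{lem:relation03}(2). The two equalities involving $\hctikz{\lcrossdown{0}{0}{1}{1}}$ are proved identically, using parts (3) and (4) of Lemma \ref{lem:relation03} in place of (1) and (2).

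The main obstacle is purely combinatorial bookkeeping. One must choose the zig-zag expansions with the correct handedness and track which strands are up versus down after Lemma \ref{lem:relation03} is applied, so that the resulting composition of cups, caps, identity strands, and the single crossing matches exactly the positions and orientations drawn on the right-hand sides of the corollary. No new algebraic input beyond Lemmas \ref{lem:relation12} and \ref{lem:relation03} is required.
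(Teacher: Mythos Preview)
Your approach is correct and matches the paper's intended argument: the corollary is obtained by composing the identities of Lemma~\ref{lem:relation03} with nested cups on one side and then invoking the zig-zag identities of Lemma~\ref{lem:relation12} to strip away the extra strands. The paper records only ``By this lemma'' before stating the corollary, leaving the zig-zag step implicit, so your write-up is in fact more explicit than the paper's.

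One small bookkeeping slip: the assignment of parts (1)--(4) of Lemma~\ref{lem:relation03} to the four equalities is swapped. In the middle diagram of the first line the caps connect $M(-)$ on the left (from the down-strands) to $M(+)$ on the right (from the top of $\mathcal R_{+,+}$), so they are $\mathrm{ev}$-type caps; this matches the left-hand side of Lemma~\ref{lem:relation03}(2), not (1). Likewise the third diagram has $\mathrm{qtr}$-type caps and uses part~(1). So for the first line one uses (2) then (1), and for the second line (4) then (3). This does not affect the structure of your argument, only the labels, and you already flagged that the remaining work is ``purely combinatorial bookkeeping.''
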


\begin{lem}\label{lem:relation06}
 We have two equalities of diagrams
\[
(f^{-1}(\lambda,\lambda)v^2_{\lambda})^{-1}\
\hctikz{
\ids{0}{.75}{.5}{.75}{1}\dcap{.5}{.75}{.5}{.5}
\lcrossup{0}{0}{.75}{.5}\idsdown{1}{0}{.5}{.75}{1}
\ids{0}{-.75}{.5}{.75}{1}\dcup{.5}{-.5}{.5}{.5};
}\quad =\quad\hctikz{\idsup{0}{0}{1}{1.5}{1};
}\quad=\quad f^{-1}(\lambda,\lambda)v^2_{\lambda}\
\hctikz{
\ids{0}{.75}{.5}{.75}{1}\dcap{.5}{.75}{.5}{.5}
\rcrossup{0}{0}{.75}{.5}\idsdown{1}{0}{.5}{.75}{1}
\ids{0}{-.75}{.5}{.75}{1}\dcup{.5}{-.5}{.5}{.5};
}\ ,
\]
where $\lambda$ is the highest weight of $M(+)$.
\end{lem}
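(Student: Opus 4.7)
The plan is to translate the diagrams into compositions of $U_{v,t}$-module maps, invoke Schur's lemma to conclude each is a scalar multiple of the identity, and compute the scalar by evaluating on a distinguished weight vector. Concretely, the left-hand curl represents
$$\phi_- = (\mathrm{id}_+ \otimes \mathrm{qtr}) \circ (\mathcal R_{+,+}^{-1} \otimes \mathrm{id}_-) \circ (\mathrm{id}_+ \otimes \mathrm{coqtr}) : M(+) \to M(+),$$
and the right-hand curl represents the analogous composition $\phi_+$ with $\mathcal R_{+,+}$ in place of $\mathcal R_{+,+}^{-1}$. Since qtr and coqtr are $U_{v,t}$-module homomorphisms by Lemma \ref{lem:fourmaps} and $\mathcal R_{+,+}^{\pm 1}$ is a module isomorphism by Proposition \ref{prop:Rmat}, both $\phi_\pm$ are $U_{v,t}$-module endomorphisms of the irreducible highest-weight module $M(+)$, hence are scalars by Schur's lemma.

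To compute the scalar for $\phi_+$, I will apply it to a highest-weight vector $m_\lambda$. From $\mathcal R = \Theta \circ \tilde f \circ P$ and the truncation $\Theta(w \otimes m_\lambda) = w \otimes m_\lambda$ (each summand $b \otimes b^*$ of $\Theta$ with $b^* \in \mathbf U^+_\nu$, $\nu > 0$, annihilates $m_\lambda$), one obtains $\mathcal R(m_\lambda \otimes w) = f(|w|, \lambda)\, w \otimes m_\lambda$. Applying $\mathrm{id}_+ \otimes \mathrm{qtr}$ and using $\mathrm{qtr}(m_\lambda \otimes w^*) = v_{-\lambda}^2\, w^*(m_\lambda)$ (which forces $|w|=\lambda$ for a non-zero contribution) collapses the sum to $\phi_+(m_\lambda) = f(\lambda, \lambda)\, v_\lambda^{-2}\, m_\lambda$, which matches $(f^{-1}(\lambda, \lambda) v_\lambda^2)^{-1}\mathrm{id}_+$. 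The $\phi_-$ computation runs analogously using $\mathcal R^{-1} = P \circ \tilde f^{-1} \circ \bar\Theta$ and the formula from Lemma \ref{lem:barquasiR}: evaluating on a lowest-weight vector $m_\mu$, each $b \in \mathbf U^-_{-\nu}$ with $\nu > 0$ annihilates $m_\mu$, so $\bar\Theta(m_\mu \otimes w) = m_\mu \otimes w$, and parallel bookkeeping yields $\phi_-(m_\mu) = f(\mu, \mu)^{-1}\, v_\mu^{-2}\, m_\mu$. Rewriting via the identities $v_{-\lambda} = v_\lambda^{-1}$ and $f(-\lambda, -\lambda) = f(\lambda, \lambda)$ from \eqref{eq:fprop}, together with $\mu = -\lambda$, one arrives at the claimed left-hand scalar $f^{-1}(\lambda, \lambda)\, v_\lambda^2$.

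The main obstacle is the $\phi_-$ step: $\bar\Theta$ does not truncate on $m_\lambda$ in the first tensor slot (in contrast to the clean truncation of $\Theta$ on $m_\lambda$ in the second slot used for $\phi_+$), so one must either dualize to the lowest-weight vector as above or reorganise the $\bar\Theta$ expansion so that the contributions from $\nu > 0$ cancel. Tracking the prefactors $(-1)^{\mathrm{tr}\nu}$, $v^{\nu\cdot\nu/2}$, and $v_{-\nu}$ from Lemma \ref{lem:barquasiR}, together with the weight constraint $|w| = \lambda - \nu$ imposed by the final qtr, is the most delicate piece of the argument; fortunately every such prefactor is trivial at $\nu = 0$, which is the only surviving term in the lowest-weight evaluation.
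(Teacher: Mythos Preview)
Your proposal is correct and follows essentially the same approach as the paper: both arguments recognise that $\phi_\pm$ are $U_{v,t}$-module endomorphisms of the irreducible module $M(+)$ and hence scalars, then evaluate $\phi_+$ on a highest-weight vector and $\phi_-$ on a lowest-weight vector to pin down those scalars, exploiting the truncation of $\Theta$ (respectively $\bar\Theta$) on extremal vectors. Your discussion of why one must pass to the lowest-weight vector for $\phi_-$ is exactly the reason the paper does so as well.
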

\begin{proof}
We denote
\[\varphi=\hctikz{
	\ids{0}{.75}{.5}{.75}{1}\dcap{.5}{.75}{.5}{.5}
	\rcrossup{0}{0}{.75}{.5}\idsdown{1}{0}{.5}{.75}{1}
	\ids{0}{-.75}{.5}{.75}{1}\dcup{.5}{-.5}{.5}{.5};
}=(\rm{id}_+\otimes \qtr)\circ (\cR_{+,+}\otimes \rm{id}_+)\circ (\rm{id}_+\otimes \coqtr),\]
\[\psi=\hctikz{
	\ids{0}{.75}{.5}{.75}{1}\dcap{.5}{.75}{.5}{.5}
\lcrossup{0}{0}{.75}{.5}\idsdown{1}{0}{.5}{.75}{1}
\ids{0}{-.75}{.5}{.75}{1}\dcup{.5}{-.5}{.5}{.5};
}=(\rm{id}_+\otimes \qtr)\circ (\cR^{-1}_{+,+}\otimes \rm{id}_+)\circ (\rm{id}_+\otimes \coqtr).\]

Since $\varphi$ and $\psi$ are $U_{v,t}$-module homomorphisms from $M(+)$ to $M(+)$,
both $\varphi$ and $\psi$ must be a multiple
of the identity which is completely determined by
the image of an extremal weight vector.

Let $m_\lambda,\ m_{-\lambda} \in M(+)$
be nonzero highest-weight and lowest-weight vectors. We have
\begin{align*}
\varphi(m_{\lambda})&=(\mathrm{id}_+\otimes \mathrm{qtr})\circ (\mathcal{R}_{+,+}\otimes \mathrm{id}_+)
\sum_{w\in B}m_{\lambda}\otimes w\otimes w^*\\
&=(\mathrm{id}_+\otimes \mathrm{qtr})
(\sum_{w\in B}f(|w|,\lambda)w\otimes m_\lambda\otimes w^*)
= f(\lambda,\lambda)v^2_{-\lambda}m_\lambda.
\end{align*}
Thus $\mathrm{id}_+\ =\ f^{-1}(\lambda,\lambda)v^2_{\lambda}\ \varphi$.

By Corollary \ref{cor:bartheta}, we have
$$
\mathcal{R}^{-1}_{+,+}=P\circ \tilde{f}^{-1} \circ \bar{\Theta}.
$$
By Lemma \ref{lem:barquasiR}, we compute
\begin{align*}
\psi(m_{-\lambda})&=(\mathrm{id}_+\otimes \mathrm{qtr})\circ (\mathcal{R}^{-1}_{+,+}\otimes \mathrm{id}_+)
\sum_{w\in B}m_{-\lambda}\otimes w\otimes w^*\\
&=(\mathrm{id}_+\otimes \mathrm{qtr})
\sum_{w\in B} f^{-1}(-\lambda,|w|)w\otimes m_{-\lambda}\otimes w^*
 =f^{-1}(-\lambda,-\lambda)v^2_{\lambda} m_{-\lambda}.
\end{align*}
Then we have $\mathrm{id}_+=(f^{-1}(\lambda,\lambda)v^2_{\lambda})^{-1}\ \psi$ following \eqref{eq:fprop}.

This finishes the proof.
\end{proof}

\begin{lem}\label{lem:relation07}
We have two equalities of diagrams
\begin{equation*}\tag{1}
\hctikz{
\NESE{0}{0}{1}{1};
}\quad = \quad
\hctikz{
\idsdown{0}{0}{.5}{1.5}{1}\ids{.5}{.75}{.5}{.75}{1}\dcap{1}{.75}{.5}{.5}
\lcrossup{.5}{0}{.75}{.5}\idsdown{1.5}{-.75}{.5}{1.5}{1}\ids{1}{-.75}{.5}{.75}{1}
\dcup{0}{-.5}{.5}{.5};
},
\end{equation*}
\begin{equation*}\tag{2}
\hctikz{
\NWSW{0}{0}{1}{1};
}\quad = \quad
\hctikz{
\ids{1}{.75}{.5}{.75}{1}\idsdown{1.5}{0}{.5}{1.5}{1}\dcap{0}{.75}{.5}{.5}
\rcrossup{.5}{0}{.75}{.5}\idsdown{0}{-.75}{.5}{1.5}{1}\ids{.5}{-.75}{.5}{.75}{1}
\dcup{1}{-.5}{.5}{.5};
}.
\end{equation*}
\end{lem}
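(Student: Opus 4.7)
The plan is to verify both identities as equalities of $U_{v,t}$-module homomorphisms by direct computation on a homogeneous weight basis, in the style of the proofs of Lemma \ref{lem:relation03} and Corollary \ref{cor:relation3}. The two identities are symmetric under swapping $\mathcal{R}\leftrightarrow \mathcal{R}^{-1}$ and reversing strand orientations, so I would write up (1) in detail and indicate that (2) follows by the parallel argument, or alternatively deduce (2) from (1) by composing with appropriate cups and caps and invoking the snake identities of Lemma \ref{lem:relation12}.

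For (1), both sides are $U_{v,t}$-homomorphisms $M(+)\otimes M(-)\to M(-)\otimes M(+)$, and I would evaluate them on a weight vector $m\otimes m^*$. On the LHS, $\mathcal{R}=\Theta\circ\tilde{f}\circ P$ applied to $m\otimes m^*$ gives
\[
f(|m^*|,|m|)\sum_{\nu}\sum_{b\in\mathbf{B}_{\nu}}(b\cdot m^*)\otimes(b^*\cdot m),
\]
where the action on $m^*$ is through the antipode $S$ as in \eqref{eq:uaction}. For the RHS, I would read the diagram as the composite
\[
(\mathrm{id}_-\otimes\mathrm{id}_+\otimes\mathrm{qtr})\circ(\mathrm{id}_-\otimes\mathcal{R}^{-1}_{+,+}\otimes\mathrm{id}_-)\circ(\mathrm{coev}\otimes\mathrm{id}_+\otimes\mathrm{id}_-),
\]
and expand it using Lemma \ref{lem:fourmaps} for $\mathrm{coev}$ and $\mathrm{qtr}$, the formula $\mathcal{R}^{-1}_{+,+}=P\circ\tilde{f}^{-1}\circ\bar{\Theta}$ supplied by Corollary \ref{cor:bartheta} together with Proposition \ref{prop:Rmat}, and the explicit description of $\bar{\Theta}$ from Lemma \ref{lem:barquasiR}. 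Collapsing the resulting sum through the identity $\sum_w m^*(b\cdot w)w^*=S^{-1}(b)\cdot m^*$ brings the RHS into a form that is term-by-term indexed by $\nu$ and $b\in\mathbf{B}_\nu$ and that is compatible with the LHS, reducing the problem to an equality of scalar prefactors.

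The main obstacle is the scalar bookkeeping. One has to reconcile the $v_{|w|}^2$ and $v_{-|b\cdot w|}^2$ factors coming from $\mathrm{coev}$ and $\mathrm{qtr}$, the prefactor $(-1)^{\mathrm{tr}\,\nu}v^{\nu\cdot\nu/2}v_{-\nu}$ attached to $\bar{\Theta}_\nu$, and the $\sigma$-twist that appears when $S$ and $S^{-1}$ are rewritten via Lemma \ref{lem:S}; the identification $\sigma^-(b)^*=\sigma^+(b^*)$ supplied by Lemma \ref{lem:hopfpairprop} is exactly what pairs up the two bases. After these substitutions, the residual scalar equality follows from the multiplicative properties \eqref{eq:fprop} of $f$ together with the weight constraints imposed by the cup and cap, namely $|b\cdot w|=-|m^*|$ (which fixes $|w|=\nu-|m^*|$) and the analogous constraint on the other strand. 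The expected pattern of cancellations is the one carried out in the proof of Lemma \ref{lem:relation03}, only now with one cup and one cap in place of two caps, which matches the single strand that is ``turned around'' in passing from the mixed crossing on the LHS to the pure crossing $\mathcal{R}^{-1}_{+,+}$ on the RHS.
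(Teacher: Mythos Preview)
Your plan is correct and essentially matches the paper's own proof: the paper likewise shows only (1), reads the right-hand diagram as the composite $(\mathrm{id}_-\otimes\mathrm{id}_+\otimes\mathrm{qtr})\circ(\mathrm{id}_-\otimes\mathcal{R}^{-1}_{+,+}\otimes\mathrm{id}_-)\circ(\mathrm{coev}\otimes\mathrm{id}_+\otimes\mathrm{id}_-)$, expands $\mathcal{R}^{-1}_{+,+}=P\circ\tilde f^{-1}\circ\bar\Theta$ using Lemma~\ref{lem:barquasiR} (rewritten in the $\sigma(\mathbf B)$ basis via Lemma~\ref{lem:hopfpairprop}), applies Lemma~\ref{lem:S} to pass between $m_2^*(\sigma^-(b)w)$ and $(b\cdot m_2^*)(w)$, and then checks the residual scalar identity with \eqref{eq:fprop}. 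The only cosmetic difference is that the paper works with $S$ and the $\sigma$-twisted basis directly rather than phrasing the collapse as $\sum_w m^*(b\cdot w)w^*=S^{-1}(b)\cdot m^*$.
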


\begin{proof}
The proofs of (1) and (2) are similar. We shall only show (1) in detail.

Denoting $\ \hctikz{\NESE{0}{0}{0.5}{0.5}}$  by $\mathcal{R}_{+,-}$, (1) is equivalent to
\begin{equation*}
\mathcal{R}_{+,-}=\varphi,
\end{equation*}
where $\varphi=(\mathrm{id}_- \otimes \mathrm{id}_+\otimes \mathrm{qtr})\circ
(\mathrm{id}_-\otimes \mathcal{R}^{-1}_{+,+}\otimes \mathrm{id}_-)\circ
(\mathrm{coev}\otimes\mathrm{id}_+\otimes \mathrm{id}_-)$.

For any $m_1\in M(+)$ and $m^*_2\in M(-)$, we have
\begin{equation}\label{eq:R+-}
\mathcal{R}_{+,-}(m_1\otimes m^*_2)=f(|m^*_2|,|m_1|)
\sum_{\nu}\sum_{b\in \mathbf{B}_{\nu}}bm^*_2\otimes b^*m_1.
\end{equation}

By Lemma \ref{lem:hopfpairprop},
we have the representation of $\bar{\Theta}$ in the basis $\sigma(\mathbf{B})$ such as
$$
\bar{\Theta}=\sum_{\nu}(-1)^{\mathrm{tr}\nu}v^{\frac{\nu\cdot\nu}2}v_{-\nu}
\sum_{b\in \mathbf{B}_\nu} \sigma^-(b)\otimes b^*.
$$
Then we compute
\begin{equation}\label{eq:varphi12}
\begin{split}
&(\mathrm{id}_-\otimes \mathcal{R}^{-1}_{+,+}\otimes \mathrm{id}_-)\circ
(\mathrm{coev}\otimes\mathrm{id}_+\otimes \mathrm{id}_-)(m_1\otimes m^*_2)\\
=&\sum_{\nu}(-1)^{\mathrm{tr}\nu}v^{\frac{\nu\cdot\nu}2}v_{-\nu}
\sum_{b\in \mathbf{B}_\nu}\sum_{w\in B}v^2_{|w|}f^{-1}(|w|-\nu,|m_1|+\nu)\\
&\ w^*\otimes b^*m_1\otimes \sigma^-(b)w\otimes m^*_2.
\end{split}
\end{equation}
By Lemma \ref{lem:S}, we have
\begin{equation}\label{eq:varphi3}
\begin{split}
&(\mathrm{id}_- \otimes \mathrm{id}_+\otimes \mathrm{qtr})
(\sum_{w\in B}v^2_{|w|}f^{-1}(|w|-\nu,|m_1|+\nu)
w^*\otimes b^*m_1\otimes \sigma^-(b) w\otimes m^*_2)\\
=&\sum_{w\in B}v^2_{\nu}f^{-1}(|w|-\nu,|m_1|+\nu)
m^*_2(\sigma(b)w) w^*\otimes b^*m_1\\
=&\sum_{w\in B}(-1)^{{\rm tr}\nu}v^{\frac{\nu\cdot \nu}{2}}v_{-\nu}
v^{-\nu \cdot|w|}c_{\nu, |w|}v^2_{\nu}f^{-1}(|w|-\nu,|m_1|+\nu)
(bm^*_2)(w)w^*\otimes b^*m_1\\
=&(-1)^{{\rm tr}\nu}v^{\frac{\nu\cdot \nu}{2}}v_{\nu}
v^{\nu \cdot(|m^*_2|-\nu)}c_{\nu, -|m^*_2|}f(|m^*_2|,|m_1|+\nu)
bm^*_2\otimes b^*m_1.
\end{split}
\end{equation}
By \eqref{eq:fprop}, \eqref{eq:varphi12} and \eqref{eq:varphi3}, we have
\begin{equation}\label{eq:varphi}
\varphi(m_1\otimes m^*_2)=f(|m^*_2|,|m_1|)
\sum_{\nu}\sum_{b\in \mathbf{B}_{\nu}}bm^*_2\otimes b^*m_1.
\end{equation}
By \eqref{eq:R+-} and \eqref{eq:varphi}, we have
$$
\mathcal{R}_{+,-}(m_1\otimes m^*_2)=\varphi(m_1\otimes m^*_2).
$$
This finishes the proof.
\end{proof}
By this lemma, we have the following corollary.
\begin{cor}\label{cor:relation7}
We have two equalities of diagrams
\[\quad\hctikz{
\idsdown{0}{0}{.5}{2}{1}
\idsup{.5}{0}{.5}{2}{1};
}=\hctikz{
\ids{.5}{2.25}{.5}{.75}{1}\idsdown{0}{1.5}{.5}{1.5}{1}\dcap{1}{2.25}{.5}{.5}
\lcrossup{.5}{1.5}{.75}{.5}
\dcup{0}{1}{.5}{.5}
\idsdown{1.5}{0}{.5}{2.25}{1}\ids{1}{.75}{.5}{.75}{1}\dcap{0}{.75}{.5}{.25}
\rcrossup{.5}{0}{.75}{.5}\idsdown{0}{-.75}{.5}{1.5}{1}\ids{.5}{-.75}{.5}{.75}{1}
\dcup{1}{-.5}{.5}{.5};
}\ , \qquad
\quad\hctikz{
\idsup{0}{0}{.5}{2}{1}
\idsdown{.5}{0}{.5}{2}{1};
}=\hctikz{
\ids{1}{2.25}{.5}{.75}{1}\idsdown{1.5}{1.5}{.5}{1.5}{1}\dcap{0}{2.25}{.5}{.5}
\lcrossup{.5}{1.5}{.75}{.5}
\dcup{1}{1}{.5}{.5}
\idsdown{0}{0}{.5}{2.25}{1}\ids{.5}{.75}{.5}{.75}{1}\dcap{1}{.75}{.5}{.25}
\rcrossup{.5}{0}{.75}{.5}\idsdown{1.5}{-.75}{.5}{1.5}{1}\ids{1}{-.75}{.5}{.75}{1}
\dcup{0}{-.5}{.5}{.5};
}\ .
\]
\end{cor}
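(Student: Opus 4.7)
The plan is to match the two halves of the right hand side with mixed-orientation crossings already computed in Lemma \ref{lem:relation07}, and then to recognize that what remains is a Reidemeister II type cancellation.

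First I would identify subdiagrams. The upper region of the right hand side of identity (1), reading from the bottom up, consists of a cup of up-oriented strands, an instance of $\lcrossup = \mathcal{R}^{-1}_{+,+}$, and a cap on the right closing off the free strand, with an accompanying $\idsdown$ on the far left. This is precisely the right hand side of Lemma \ref{lem:relation07}(1), so the upper region equals $\hctikz{\NESE{0}{0}{0.5}{0.5}}$. Symmetrically, the lower region uses $\rcrossup = \mathcal{R}_{+,+}$ together with a cap below and a cup above, which matches the right hand side of Lemma \ref{lem:relation07}(2), and therefore equals $\hctikz{\NWSW{0}{0}{0.5}{0.5}}$. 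Consequently the first equation of the corollary reduces to
\[
\hctikz{\NESE{0}{0}{1}{1}} \circ \hctikz{\NWSW{0}{0}{1}{1}} \;=\; \mathrm{id}_{M(-)\otimes M(+)}.
\]

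Next I would verify this Reidemeister II cancellation. Substituting the Lemma \ref{lem:relation07} presentations for NESE and NWSW and stacking them, the interior cups and caps join in adjacent pairs, each of which collapses by the zig-zag identities of Lemma \ref{lem:relation12}. The core of the resulting composite is $\mathcal{R}_{+,+}\circ \mathcal{R}^{-1}_{+,+}$ acting on two parallel up-strands, which equals the identity by Proposition \ref{prop:Rmat} together with Corollary \ref{cor:bartheta}. A second application of the zig-zag identities eliminates the remaining outer cups and caps, leaving only the two parallel strands of the left hand side. The second displayed equality follows from an entirely analogous argument, invoking Lemma \ref{lem:relation07}(2) for the top block and Lemma \ref{lem:relation07}(1) for the bottom.

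The main obstacle is diagrammatic bookkeeping rather than new algebraic content: each ingredient (the Reidemeister II cancellation $\mathcal{R}_{+,+}\mathcal{R}^{-1}_{+,+}=\mathrm{id}$ and the zig-zag relations) is already available, so the actual work lies in aligning the coordinates of the many cups, caps, and crossings so that the cancellations apply in the correct order. One subtlety worth flagging is that no leftover scalar should appear: unlike in Lemma \ref{lem:relation06}, the present composition contains an even number of zig-zags and no writhe-creating loop, so no factor of the form $f^{-1}(\lambda,\lambda)v_{\lambda}^2$ survives.
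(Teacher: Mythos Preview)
Your decomposition of the right-hand side into the top block (the right-hand side of Lemma \ref{lem:relation07}(1)) stacked on the bottom block (the right-hand side of Lemma \ref{lem:relation07}(2)) is exactly right, and this is the paper's route as well. The problem is the second paragraph: the zig-zag cancellation you describe does not occur.

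Look at the interface between the two blocks. The cap coming from the bottom block is $\mathrm{ev}$ at positions $1$--$2$, and the cup coming from the top block is $\mathrm{coev}$, again at positions $1$--$2$; the two surviving strands sit to their right at positions $3$--$4$. So the middle of the composite is
\[
(\mathrm{coev}\otimes \mathrm{id}_{+}\otimes\mathrm{id}_{-})\circ(\mathrm{ev}\otimes \mathrm{id}_{+}\otimes\mathrm{id}_{-}),
\]
which sends $m^*\otimes m\otimes n\otimes n^*$ to $m^*(m)\sum_w v_{|w|}^2\, w^*\otimes w\otimes n\otimes n^*$. This is not the identity and is not of the shape $(\mathrm{id}\otimes\mathrm{cap})\circ(\mathrm{cup}\otimes\mathrm{id})$ required by Lemma \ref{lem:relation12}. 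Moreover the crossings sit at positions $2$--$3$ and share a strand with both the cap and the cup, so you cannot slide $\mathrm{ev}$ or $\mathrm{coev}$ past them to expose $\mathcal{R}_{+,+}\circ\mathcal{R}^{-1}_{+,+}$. In short, ``substituting the presentations and stacking them'' just returns the original right-hand side, and no purely diagrammatic zig-zag reduction finishes from there.

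What actually makes the corollary immediate from Lemma \ref{lem:relation07} is the identification of the \emph{left}-hand sides of that lemma. The proof of Lemma \ref{lem:relation07}(1) shows that the NESE crossing equals $\mathcal{R}_{+,-}=\Theta\circ\tilde f\circ P$ on $M(+)\otimes M(-)$; the analogous (omitted) proof of (2) identifies the NWSW crossing with $\mathcal{R}_{+,-}^{-1}=P\circ\tilde f^{-1}\circ\bar\Theta$. Once you have this, the corollary is literally $\mathcal{R}_{+,-}\circ\mathcal{R}_{+,-}^{-1}=\mathrm{id}$ and $\mathcal{R}_{+,-}^{-1}\circ\mathcal{R}_{+,-}=\mathrm{id}$, which hold by Proposition \ref{prop:Rmat} and Corollary \ref{cor:bartheta}. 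So the missing step is not a zig-zag manipulation of the presentations, but rather invoking what Lemma \ref{lem:relation07} tells you the two blocks \emph{are} as module maps.
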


\begin{thm}\label{thm:knot invariant}
There exists a strict tensor functor $\mathcal{T}$
from the tangle category $(\rm{OTa}, \otimes, \emptyset)$ to
the $U_{v,t}$-module category $(\rm{Mod}, \otimes, \mathbb{Q}(v,t))$
such that $\mathcal{T}((+)) =M(+), \mathcal{T}((-)) =M(-)$, and
\begin{equation*}
\begin{split}
&\mathcal{T}(X_{+})=(f(\lambda,\lambda)v_{-\lambda}^2)^{-1}\mathcal{R},
\quad \mathcal{T}(\undercurvearrowright)=\mathrm{coev},
\quad \mathcal{T}(\undercurvearrowleft)=\mathrm{coqtr},\\
&\mathcal{T}(X_{-})=f(\lambda,\lambda)v_{-\lambda}^2\mathcal{R}^{-1},
\quad \ \ \mathcal{T}(\curvearrowright)=\mathrm{qtr},
\quad\ \ \mathcal{T}( \curvearrowleft)=\mathrm{ev},
\end{split}
\end{equation*}
where $\lambda$ is the highest weight of $M(+)$.
\end{thm}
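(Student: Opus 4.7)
The plan is to invoke Turaev's presentation of the tangle category (Theorem \ref{thm:OTa}): to produce a strict monoidal functor $\mathcal T$ from $(\mathrm{OTa},\otimes,\emptyset)$ it suffices to assign a morphism in $\mathrm{Mod}$ to each of the six generators $\undercurvearrowleft,\ \undercurvearrowright,\ \curvearrowleft,\ \curvearrowright,\ X_+,\ X_-$ and then verify that the seven defining relations (i)--(vii) are preserved. Assignment on the generators is already specified in the statement, so the entire proof reduces to checking each of these relations, and each of (i)--(vii) will follow from a lemma or proposition already proved in the paper.

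First I would match each defining relation with the lemma that verifies it. The straightening relations (i)--(ii) are precisely the four diagrammatic equalities of Lemma \ref{lem:relation12}. The naturality relation (iii), which expresses that the $X_\pm$--crossing slides past cups and caps, is given by Lemma \ref{lem:relation03} together with Corollary \ref{cor:relation3}, so after applying $\mathcal T$ both sides of (iii) produce the same composite of $\mathcal R^{\pm 1}$ with coevaluations and (co)quantum traces up to the common scalar $(f(\lambda,\lambda)v_{-\lambda}^2)^{\mp 1}$ and these scalars agree on both sides. Relation (iv), the inverse relation $X_+\circ X_-=X_-\circ X_+=\uparrow\uparrow$, is immediate since $\mathcal T(X_+)\mathcal T(X_-) =(f(\lambda,\lambda)v_{-\lambda}^2)^{-1}\cdot f(\lambda,\lambda)v_{-\lambda}^2\cdot \mathcal R\mathcal R^{-1}=\mathrm{id}$, and the invertibility of $\mathcal R$ is granted by Corollary \ref{cor:bartheta} and Proposition \ref{prop:Rmat}. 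Relation (v), the braid (Yang--Baxter) relation, is exactly Proposition \ref{prop:braidrel}, where the overall scalar $(f(\lambda,\lambda)v_{-\lambda}^2)^{-3}$ appears identically on both sides and cancels.

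The most delicate step, and the one I would flag as the main obstacle, is relation (vi): this is the first Reidemeister-type move that fails without a careful normalization of the crossings. Unnormalized, the left-hand side of (vi) applied to $\mathcal R$ yields a nontrivial scalar $f(\lambda,\lambda)v_{-\lambda}^2$ on the highest weight vector, exactly as computed in Lemma \ref{lem:relation06}. Choosing $\mathcal T(X_+)=(f(\lambda,\lambda)v_{-\lambda}^2)^{-1}\mathcal R$ and $\mathcal T(X_-)=f(\lambda,\lambda)v_{-\lambda}^2\mathcal R^{-1}$ is precisely the normalization that cancels these scalars and makes (vi) an identity after applying $\mathcal T$; thus (vi) is a direct consequence of Lemma \ref{lem:relation06}. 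Everything in the theorem hinges on this specific constant being the one produced by Lemma \ref{lem:relation06}, and this is the verification I would carry out with the most care.

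Finally I would handle relation (vii). After unfolding the definitions of $Y$ and $T$, applying $\mathcal T$ produces the composite of (co)evaluation and (co)quantum trace with $\mathcal R^{\pm 1}$ that appears in Lemma \ref{lem:relation07} and Corollary \ref{cor:relation7}; these lemmas identify the composite with $\mathcal R_{+,-}$ (respectively its inverse), so both sides collapse to $\mathrm{id}_{-}\otimes\mathrm{id}_{+}$ (respectively $\mathrm{id}_{+}\otimes\mathrm{id}_{-}$). Since all relations (i)--(vii) have been verified, Turaev's theorem yields a unique strict monoidal functor $\mathcal T$ with the required values, completing the proof. The scalar normalization in (vi) is what forces the choice of $(f(\lambda,\lambda)v_{-\lambda}^2)^{\mp 1}$ in front of $\mathcal R^{\pm 1}$, and once this is in place every other relation checks out by a direct comparison with the corresponding lemma.
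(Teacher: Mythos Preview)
Your proposal is correct and follows essentially the same approach as the paper: both reduce the existence of $\mathcal T$ to Turaev's presentation (Theorem \ref{thm:OTa}) and then verify relations (i)--(vii) via Lemma \ref{lem:relation12}, Corollary \ref{cor:relation3}, Lemma \ref{lem:relation06}, Corollary \ref{cor:relation7}, and the properties of $\mathcal R$ (invertibility and the Yang--Baxter equation, Proposition \ref{prop:braidrel}). Your write-up is in fact more explicit than the paper's about the role of the scalar normalization in (vi) and the scalar cancellations in (iv) and (v), but the underlying argument is the same.
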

\begin{proof}
To prove the proposition, it is enough to show that the evaluations of $\mathcal{T}$
at both sides of the relations of Theorem \ref{thm:OTa} coincide.

The relations {\rm (i), (ii), (iii), (vi)} and {\rm(vii)} follow from
Lemma \ref{lem:relation12}, Corollary \ref{cor:relation3}, Lemma \ref{lem:relation06} and Corollary \ref{cor:relation7}, respectively.
The relations {\rm (iv)} and {\rm (v)} hold for the properties of $\mathcal{R}$.
\end{proof}

\noindent$\textbf{Remark}\quad $ Given a tangle $L$ of type $(n,n)$ for any $n \in \mathbb{N}$,
we get the closure $\tilde{L}$ of $L$ by connecting
the origin and the endpoint one by one with no intersection.
For example, see Figure 2.
Furthermore, the evaluation of the functor $\mathcal{T}$ on $\tilde{L}$
is an endomorphism of ground field $\mathbb{Q}(v,t)$.
Therefore, $\mathcal{T}(\tilde{L})(1)$ is a binary polynomial with the parameters $v$ and $t$.
That is called the quantum knot invariant.
We expect this is a refinement of the knot invariant associated to the one-parameter case.

\begin{center}
 L=\quad
\hctikz{
\rcrossup{1}{0}{.75}{.5}
}\ ,
\qquad\qquad
$\tilde{L}$=\quad
\hctikz{
\dcap{1}{.75}{1.5}{1}\dcap{1.5}{.75}{.5}{.5}
\rcrossup{1}{0}{.75}{.5}\idsdown{2}{0}{.5}{1}{2}
\dcup{1.5}{-.5}{.5}{.5}\dcup{1}{-1}{1.5}{1}
}\ .
\end{center}
\vspace{-7pt}
$$\mathrm{Figure}\ 2$$



\begin{thebibliography}{DDPW08}\frenchspacing
\bibitem[BW]{BW}
G. Benkart and S. Witherspoon, \emph{Two-parameter quantum groups and Drinfel'd doubles}. Algebras \& Representation Theory, 2004, 7(3):261-286.

\bibitem[C]{C}
S. Clark, \emph{Odd knot invariants from quantum covering groups}. Algebraic \& Geometric Topology, 2017, 17(5):2961-3005.

\bibitem[CFYW]{CFYW}
S.~Clark, Z.~Fan, and Y.~Li, elt, \emph{Quantum supergroups III. Twistors.} Communications
in Mathematical Physics, 2014, 332(1): 415-436.

\bibitem[FL]{FL}
Z.~Fan and Y.~Li, \emph{Two-parameter quantum algebras, canonical bases and categorifications}. International Mathematics Research Notices, 2015, 2015(16):7016-7062.

\bibitem[FX]{FX}
Z.~Fan and J.~Xing, \emph{Drinfeld double of deformed quantum algebras}. Journal of Algebra, 2019, 534:358-383.

\bibitem[HPR]{HPR}
N.~Hu, Y.~Pei, and M.~Rosso, \emph{Multi-parameter quantum groups and quantum shuffles. I.}
Quantum Affine Algebras, Extended Affine Lie Algebras, and their Applications, 145-71.
Contemp. Math. 506. Providence, RI: American Mathematical Society, 2010.

\bibitem[Jan]{Ja}
J.C.~Jantzen, \emph{Lectures on quantum groups}. American Mathematical Society, Providence, 1996.

\bibitem[Kas]{Kassel}
C.~Kassel, {\em Quantum groups}. Springer-Verlag, New York,  1995.

\bibitem[KM]{KM}
L.H.~Kauffman and V.O.~Manturov, \emph{Graphical constructions for the $sl(3)$, $C_2$ and $G_2$ invariants for virtual knots, virtual braids and free knots.} Journal of Knot Theory and Its Ramifications, 2015, 24(06): 1550031.

\bibitem[K11]{Kau11}
L. H.~Kauffman, \emph{Remarks on Khovanov homology and the Potts model.} Perspectives in Analysis, Geometry, and Topology, 2011:237-262.

\bibitem[K16]{Kau}
L. H.~Kauffman, \emph{Knot logic and topological quantum computing with Majorana fermions.} Logic and Algebraic Structures in Quantum Computing, 2016:223-336.

\bibitem[L]{Lusztigbook}
G.~Lusztig, \emph{Introduction to quantum groups}. Modern Birkh$\ddot{a}$user Classics, Birkh$\ddot{a}$user$/$Springer, New York, 2010.


\bibitem[NSS]{N}
C.~Nayak, S. H.~Simon and A.~Stern, elt, \emph{Non-Abelian anyons and topological quantum computation.} Reviews of Modern Physics, 2008, 80:1083-1159.

\bibitem[RT]{RT90}
N. Y.~Reshetikhin and V. G.~Turaev, \emph{Ribbon graphs and their invariants derived from quantum groups.} Communications in Mathematical Physics, 1990, 127:1-26.

\bibitem[Tur90]{Tur90}
V G.~Turaev, \emph{Operator invariants of tangles, and R-matrices.} Mathematics of the USSR-Izvestiya, 1990, 35(2):411-444.

\bibitem[W]{W}
E.~Witten, \emph{Quantum field theory and the jones polynomial.} Communications in Mathematical Physics, 1989, 121(3): 351-399.

\bibitem[X97]{Xiao1}
J.~Xiao, \emph{Drinfeld double and Ringel-Green Theory of Hall algebras}. Journal of Algebra, 1997, 190(1):100-144.

\bibitem[ZGB]{ZGB}
R.~Zhang, M.~Gould and  A.~Bracken, \emph{Quantum group invariants and link polynomials.} Communications in Mathematical Physics, 1991, 137(1): 13-27.

\end{thebibliography}
\end{document}